\newcommand{\eq}[1]{(\ref{#1})}
\newcommand{\la}{\lambda}
\newtheorem{definition}{Definition}
\newtheorem{theorem}{Theorem}
\newtheorem{proposition}{Proposition}
\newtheorem{corollary}{Corollary}
\newcommand{\CC}{\mathbb{C}}
\newcommand{\II}{\mathbb{I}}
\newcommand{\RR}{\mathbb{R}}
\newcommand{\LL}{\mathcal L}
\newcommand{\MM}{\mathcal M}
\newcommand{\BB}{\mathcal B}
\newcommand{\VV}{\mathcal V}
\newcommand{\UU}{\mathcal U}
\newcommand{\DD}{\mathcal D}
\newcommand{\spanv}{\mathop{\mathrm{span}}} 
\newcommand{\gKer}{\mathop{\mathrm{gKer}}}
\newcommand{\Ker}{\mathop{\mathrm{Ker}}} 
\newcommand{\Ran}{\mathop{\mathrm{Ran}}}
\renewcommand{\Re}{\mathop{\rm Re}\nolimits}
\renewcommand{\Im}{\mathop{\rm Im}\nolimits}
\newcommand{\sign}{\mathop{\rm sign}\nolimits}
\newcommand{\sspan}{\spanv}
\newcommand{\ml}{l\kern-0.035cm\char39\kern-0.03cm}
\newcommand{\Haragus}{H{\u a}r{\u a}gu\c{s}}
\newcounter{example}
\newenvironment{example}[1][]{\refstepcounter{example}\par\medskip\noindent%
 $\vartriangleleft$  \textit{Example~\theexample. #1} \rmfamily}{$\vartriangleright$ \medskip}
\begin{document}
\title{Graphical Krein Signature Theory and Evans-Krein Functions}

\author{Richard Koll{\'a}r \\
Department of Applied Mathematics and Statistics\\
Faculty of Mathematics, Physics and Informatics \\
Comenius University \\
Mlynsk{\'a} dolina,  Bratislava, Slovakia \\
E-mail: {\tt kollar@fmph.uniba.sk}\\
\\
Peter D. Miller\\
Department of Mathematics \\ 
University of Michigan \\
530 Church Street, Ann Arbor, MI 48109--1043, U.S.A. \\
E-mail: {\tt millerpd@umich.edu}
}

\maketitle

\abstract{%
Two concepts, evidently very different in nature, have proved to be useful in analytical 
and numerical studies 
of spectral stability in nonlinear wave theory:  (i) the Krein signature of an eigenvalue, 
a quantity usually defined in terms of the relative orientation of certain subspaces that is 
capable of detecting the structural instability of imaginary eigenvalues and hence their potential for 
moving into the right half-plane leading to dynamical instability under perturbation of the system, and (ii) the Evans function, 
an analytic function detecting the location of eigenvalues.  
One might expect these two concepts to be related, but unfortunately examples demonstrate that 
there is no way in general to deduce the Krein signature of an eigenvalue from the Evans function, 
for example by studying  derivatives of the latter.  

The purpose of this paper is to recall and popularize a simple graphical interpretation of the Krein 
signature well-known in the spectral theory of polynomial operator pencils.  Once established, this interpretation 
avoids altogether the need to view the Krein signature in terms of root subspaces and their relation 
to indefinite quadratic forms.  To demonstrate the utility of this graphical interpretation of the Krein 
signature, we use it to define a simple generalization of the Evans function ---  the Evans-Krein function --- 
that allows the calculation of Krein signatures in a way that is easy to incorporate into existing Evans 
function evaluation codes at virtually no additional computational cost. 
The graphical interpretation of the Krein signature also enables us to give elegant 
proofs of index theorems for linearized Hamiltonians in the finite dimensional setting: 
a general result implying as a corollary
the generalized Vakhitov-Kolokolov criterion (or Grillakis-Shatah-Strauss criterion)
and a count of  real eigenvalues 
for linearized Hamiltonian systems in canonical form.  These applications demonstrate how the simple graphical nature of 
the Krein signature may be  easily exploited.}
%


\section{Introduction}
This paper concerns  relations among several concepts that are commonly regarded as useful
in the analysis of (generalized) eigenvalue problems such as those that occur in the stability theory of nonlinear waves:  
\emph{Krein signatures} of eigenvalues, \emph{Evans functions}, and
\emph{index theorems} (also known as \emph{inertia laws} or \emph{eigenvalue counts}) 
governing the spectra of pairs of related operators.  The most elementary setting in which many of these notions appear is 
the stability analysis of equilibria for finite-dimensional Hamiltonian systems, and we take this opportunity right at the beginning 
of the paper to introduce the key ideas in this simple setting, including a beautiful and simple graphical method of analysis.

\subsection{A graphical method for linearized Hamiltonians\label{s:Example}}
The most important features of the linearized dynamics of a finite dimensional 
Hamiltonian system close to a critical point are determined by the values of $\nu$ in the 
\emph{spectrum} $\sigma(J\!L)$ of the (nonselfadjoint) problem
\begin{equation}
J\!L u = \nu u\, , \qquad \qquad L^{\ast}: = \overline{L^\mathsf{T}}= L, \ J^{\ast} =  -J.
\label{JL}
\end{equation} 
Here $J$ is an invertible skew-Hermitian matrix and $L$ a Hermitian matrix 
of the same dimension (both over the complex numbers $\mathbb{C}$).%
\footnote{On notation:  we use $\overline{\nu}$ to denote the complex conjugate of a complex number $\nu$,  
while $A^{\ast}$  denotes the conjugate transpose of $A$, or more generally when an inner product is understood, 
the adjoint operator of $A$.  We use $(u,v)$ to denote an inner product on vectors $u$ and $v$, linear in $u$ 
and conjugate-linear in $v$.}
The conditions on 
$J$ require that the dimension of the space be even and hence $L$ and $J$ have dimension $2n \times 2n$.  
Indeed, a Hamiltonian system linearized about an equilibrium
takes the form
\begin{equation}
\frac{dy}{dt} = J\!L y,
\label{dydtexamples}
\end{equation}
where $y$ denotes the displacement from equilibrium.  The spectral problem \eqref{JL} arises by looking for solutions 
growing as $e^{\nu t}$ by
making the substitution $y(t)=e^{\nu t}u$, and we say that \eqref{dydtexamples} is \emph{spectrally stable} if $\sigma(J\!L)$ 
consists of only purely imaginary  numbers.  The points $\nu\in\sigma(J\!L)$ for which $\Re\{\nu\}\neq 0$ are called 
the \emph{unstable spectrum}
\footnote{Although the points $\nu$ with $\Re\{\nu\} < 0$ correspond to decaying 
solutions of \eq{dydtexamples}, due to the basic Hamiltonian symmetry of \eq{JL} they always go hand-in-hand with 
points $-\overline{\nu}$ in the right half-plane that are associated with exponentially growing solutions of \eq{dydtexamples}, 
explaining why they are also included in the unstable spectrum.} 
of \eqref{dydtexamples}. The \emph{linearized energy} associated with the equation \eq{dydtexamples} 
is simply the quadratic form $E[u]:=(Lu,u)$, 
and the fundamental conservation law corresponding to \eq{dydtexamples} is that $dE[y]/dt=0$ on all solutions $y=y(t)$.

With the use of information on the spectrum $\sigma(L)\subset\mathbb{R}$ of $L$ our goal is to characterize the spectrum 
$\sigma(J\!L)\subset\mathbb{C}$ of $J\!L$ and 
in particular to determine (i) the part of $\sigma(J\!L)$  in the open right half of the complex plane (i.e., the unstable spectrum)
and (ii) the potential for purely imaginary points in $\sigma(J\!L)$ to collide on the imaginary axis under 
suitable perturbations of the matrices resulting in bifurcations of  Hamiltonian-Hopf type \cite{MacKay,Meiss} 
in which points of $\sigma(J\!L)$ leave the imaginary axis and further destabilize the system.

The key idea is to relate the purely imaginary spectrum of \eq{JL} to invertibility of the selfadjoint (Hermitian)
linear pencil $\LL(\la)$ (see Section~\ref{s:pencils} for a proper definition of operator pencils) defined by
\begin{equation}
\LL = \LL(\la) := L-\la K, \qquad \qquad K := (iJ)^{-1}=K^*, \quad \la := i \nu \in\mathbb{R}.
\label{JLpencil}
\end{equation}
Indeed, upon multiplying \eq{JL} by $-iJ^{-1}$ one sees that \eq{JL} is equivalent to the generalized spectral problem 
\begin{equation}
L u = \la Ku\, .
\label{LKeq}
\end{equation}
The graphical method \cite{BinVol1996,GLR,Grillakis1988,KLjub} is based on the observation that the imaginary spectrum of \eq{JL}, 
or equivalently the real spectrum $\lambda$ of \eq{LKeq}, can be found by studying the spectrum of the eigenvalue 
pencil $\LL(\la)$ and the way it depends on $\lambda\in\mathbb{R}$, i.e., by solving the selfadjoint eigenvalue problem 
\begin{equation}
\LL(\la)u(\la) =(L - \la K) u(\la) = \mu(\la) u(\la)\, 
\label{gpencil}
\end{equation}
parametrized by $\la\in\mathbb{R}$ (see Fig.\ \ref{figintro}).
\begin{figure}[htp]
\centering
\includegraphics{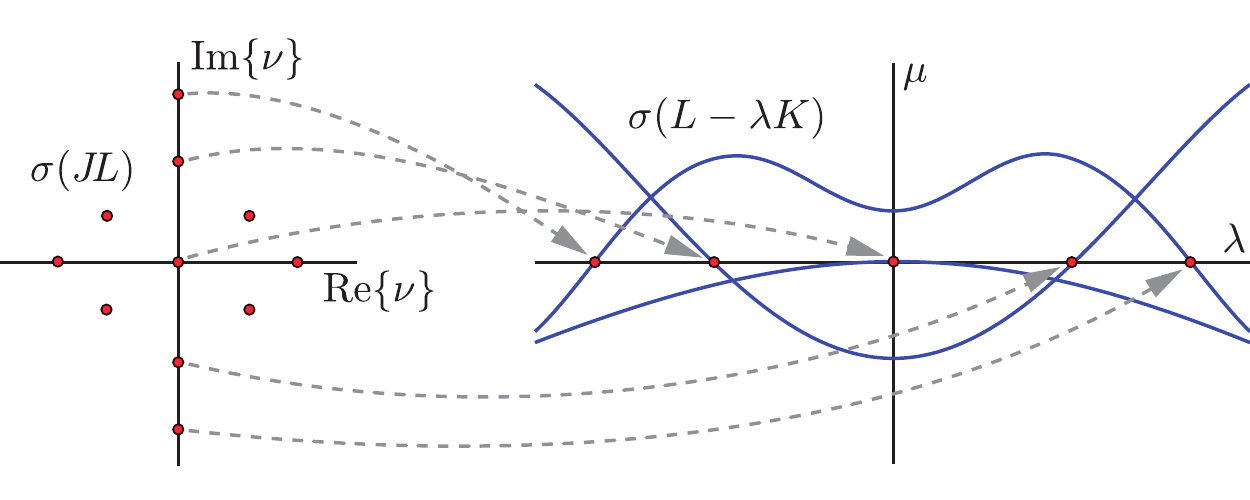}
\caption[]{The key to the graphical method lies in the correspondence of purely imaginary points $\nu$ in the spectrum of $J\!L$ 
with zeros in the spectrum of the pencil $\LL(\la) = L-\la K$, where $K:=(iJ)^{-1}$.}
\label{figintro}
\end{figure}
Clearly, $\nu=-i\lambda$ is a purely imaginary number in $\sigma(J\!L)$ if and only if $\lambda\in\mathbb{R}$ is a value for which $\LL(\la)$ 
has a nontrivial kernel, that is, $0\in\sigma(\LL(\la))$. It is also easy to see from the right-hand plot in Fig.\ \ref{figintro} 
that the particular values $\la$ 
for which $0 \in \sigma(\LL(\la))$ correspond in a one-to-one fashion to intercepts of curves
$\mu=\mu(\la)\in\sigma(\LL(\la))$ with the axis $\mu = 0$. This identification also holds true for spectrum of higher multiplicity 
(for details see Theorem~\ref{multiplicities}). 

To make a clear distinction between the two types of spectrum that are related by the above simple argument we now introduce the 
following terminology. The points $\la$ in $i\sigma(J\!L)$ will be called \emph{characteristic values} of $\LL$ with corresponding invariant 
\emph{root spaces} spanned by \emph{generalized characteristic vectors} or \emph{root vectors}.  Therefore a purely imaginary point 
in $\sigma(J\!L)$ corresponds to a real characteristic value of $\LL$, but the characteristic values of $\LL$ are generally complex
(see Definition~\ref{def:char_matrix}). 
On the other hand, given an arbitrary real number $\lambda$, the points $\mu$ in $\sigma(\LL(\la))$ will be called 
\emph{eigenvalues} of $\LL(\la)$ with corresponding \emph{eigenspaces} spanned by (genuine, due to selfadjointness as 
$\lambda\in\mathbb{R}$)  \emph{eigenvectors}.    When we consider how $\mu=\mu(\lambda)$ depends on $\lambda\in\mathbb{R}$ 
we will frequently call $\mu(\lambda)$ an \emph{eigenvalue branch}.

\subsection{Use of the graphical method to find unstable spectrum}
Purely imaginary points in $\sigma(J\!L)$ correspond in a one-to-one fashion to real characteristic values of $\LL$ simply by rotation 
of the complex plane by $90^\circ$.  The most obvious advantage of the graphical method is that the presence and location of 
real characteristic values of $\LL$ can be easily read off from plots of  the eigenvalue branches $\mu(\la)$ of $\LL(\la)$ using 
the following elementary observations. Selfadjointness of $L$ and skewadjointness of $J$ imply that 
$\sigma(J\!L)$ consists of pairs $(\nu, -\overline{\nu})$, and hence the characteristic values of $\LL$ come in complex-conjugate pairs.  
Indeed, if $u\in\mathbb{C}^{2n}$ is a (right) root vector of $J\!L$ corresponding to $\nu\in\sigma(J\!L)$, 
then $(J^{-1}u)^{\ast}$ is a left root vector of $J\!L$ corresponding to $-\overline{\nu}\in\sigma(J\!L)$.
Therefore the $J\!L$-invariant subspace spanned by root spaces corresponding to non-real characteristic values of $\LL$  
is even-dimensional.  Since the base space $X=\mathbb{C}^{2n}$ is also even-dimensional, the number 
of real characteristic values of $\LL$ (counting multiplicities) is even, and consequently the total number of intercepts 
of eigenvalue branches $\mu(\la)$ with $\mu = 0$ (again, counting multiplicities) must be even.

If furthermore $J$ and $L$ are real matrices (see Definition~\ref{Def:real} for a more general definition of reality in less obvious contexts) 
then $\sigma(J\!L)$ also consists of pairs $(\nu,\overline{\nu})$ and we say that \eq{JL} has \emph{full Hamiltonian symmetry}.  
In such a case, $\sigma(\LL(\la))$ is necessarily symmetric with respect to the vertical ($\lambda=0$) axis, so the plots of 
eigenvalue branches $\mu$ as functions of $\lambda$ are left-right symmetric.  This is the case illustrated in Fig.\ \ref{figintro}.

Finally, the dimension of the problem limits  the maximal number of intercepts  
of all  branches $\mu(\la)$ with the axis $\mu = 0$ to $2n$. This is also true for intercepts with every horizontal axis $\mu = \mu_0$,
as one can consider the problem  $J(L-\mu_0 I) u = \nu u$. 
With this basic approach in hand, we now turn to several elementary examples.

\begin{example}[Definite Hamiltonians.]\label{ex:posdef}
It is well-known that if $L$ is a definite (positive or negative) matrix, then $\sigma(J\!L)$ is purely imaginary and nonzero. 
Indeed, if $u\in \mathbb{C}^{2n}$ is a root vector of $J\!L$ corresponding to $\nu\in\sigma(J\!L)$, then $Lu=\nu J^{-1}u$, 
so taking the Euclidean inner product with $u$ gives
$$
0 \neq (Lu, u) = \nu (J^{-1}u, u),
$$
and hence neither $\nu$ nor $(J^{-1}u,u)$ can be zero.  Moreover, $(Lu,u)\in\RR$ and 
$(J^{-1}u, u) \in i\RR$, and hence $\nu=(Lu,u)/(J^{-1}u,u)$ is a purely imaginary nonzero number.  

This simple fact can also be deduced from a plot of the eigenvalue branches $\mu(\lambda)$ of
the selfadjoint pencil $\LL(\la)$.  Let us assume without loss of generality that $L$ is positive definite.  We need just three facts:
\begin{itemize}
\item The $2n$ eigenvalue branches $\mu(\lambda)$ may be taken to be (we say this only because ambiguity can arise in defining 
the branches if $\LL(\la)$ has a nonsimple, but necessarily semisimple, eigenvalue for some $\lambda\in\mathbb{R}$) continuous functions 
of $\lambda$.  In fact, they may be chosen to be \emph{analytic} functions of $\lambda$, although we do not require 
smoothness of any sort here.
\item The $2n$ eigenvalue branches $\mu(\lambda)$ are all positive at $\lambda=0$ since
$\LL(0)=L$.
\item By simple perturbation theory, $\sigma(\LL(\la))=-\lambda\sigma(K)+O(1)$ as $|\lambda|\to\infty$.  
Since $K$ is Hermitian and invertible, it has
$m\le 2n$ strictly positive eigenvalues and $2n-m$ strictly negative eigenvalues.  
Hence there are exactly $m$ eigenvalue branches $\mu(\lambda)$ 
tending to $-\infty$ as $\lambda\to +\infty$, and exactly $2n-m$ eigenvalue branches $\mu(\lambda)$ 
tending to $-\infty$ as $\lambda\to -\infty$.
\end{itemize}
That all $2n$ characteristic values of $\LL$ are nonzero real numbers, and hence $\sigma(J\!L)\subset i\RR$ making the system 
\eqref{dydtexamples} spectrally stable, therefore appears as a simple consequence of 
the intermediate value theorem;  $m$ branches $\mu(\lambda)$ necessarily cross $\mu=0$ for $\lambda>0$ and $2n-m$ branches 
cross for $\lambda<0$.  Since $m+(2n-m)=2n$ exhausts the dimension of $X=\mathbb{C}^{2n}$, all characteristic values 
have been obtained in this way. This approach provides the additional information that exactly
$m$ of the points in $\sigma(J\!L)$ are negative imaginary numbers.  Note that in the case of full Hamiltonian symmetry, $m=n$.

To illustrate this phenomenon, the branches $\mu(\lambda)$ corresponding to the following
specific choices of $L$ positive definite and $J\!L$ having full Hamiltonian symmetry:
\[
L=\frac{1}{2}\begin{pmatrix}1 & 0 & 0 & 0\\0 & 2 & 0 & 0\\0 & 0 & 3 & 0\\0 & 0 & 0 & 4\end{pmatrix},\quad
J=\begin{pmatrix}0 & 0 & 1 & 0\\0 & 0 & 0 & 1\\-1 & 0 & 0 & 0\\0 & -1 & 0 & 0\end{pmatrix},
\]
are plotted in the left-hand panel of Fig.\ \ref{fig1}.
\begin{figure}[htp]
\centering
\includegraphics{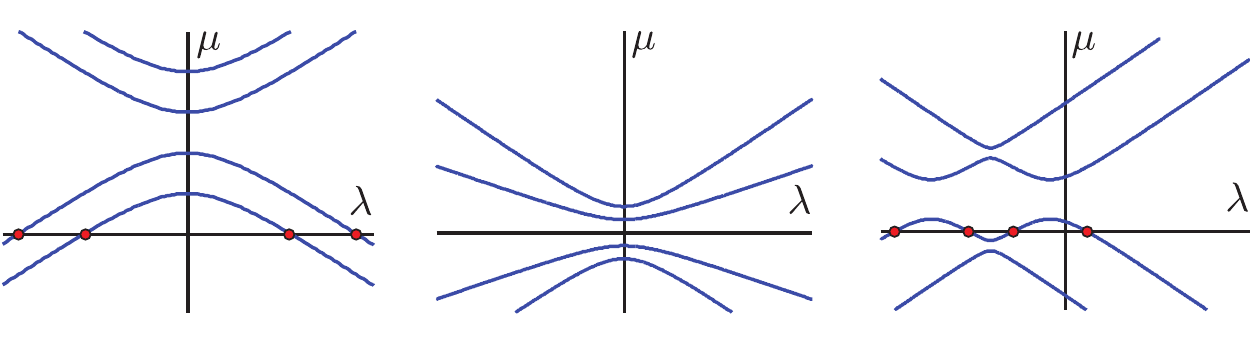}
\caption[]{%
The four eigenvalue branches $\mu(\la)$ of $\LL(\la)$ in Examples 1--3. 
The real (characteristic) values $\lambda$ for which eigenvalues $\mu(\lambda)$ of $\LL(\la)$ intersect 
the horizontal line $\mu = 0$ and that  
correspond to purely imaginary points $\nu=-i\lambda\in\sigma(J\!L)$ are indicated with red dots.}
\label{fig1}
\end{figure}
\end{example}

\begin{example}[Indefinite Hamiltonians and instability.]
\label{ex:indefinite}
If $L$ is indefinite, then $\sigma(J\!L)$ is not automatically confined to the imaginary axis. As an illustration of this phenomenon, 
consider the matrices
\[
L=\begin{pmatrix}-1&0&0&0\\0&2&0&0\\0&0&1&0\\0&0&0&-2\end{pmatrix},\quad
J=\begin{pmatrix}0&0&2&0\\0&0&0&1\\-2&0&0&0\\0&-1&0&0\end{pmatrix},
\]
which again provide full Hamiltonian symmetry, but now $L$ has two positive and two negative eigenvalues.  The corresponding eigenvalue 
branches $\mu(\lambda)$ are plotted against $\lambda\in\RR$ in the middle panel of Fig.\ \ref{fig1}.  
Obviously $\sigma(\LL(\lambda))$ is bounded away from zero as $\lambda\in\RR$ varies, and hence there are 
no real characteristic values of $\LL$, implying that $\sigma(J\!L)\cap i\RR=\emptyset$.  
The full Hamiltonian symmetry in this example implies that $\sigma(J\!L)$ is organized in quadruplets.
Thus either $\sigma(J\!L)$ consists of two positive and two negative real points, or $\sigma(J\!L)$ consists of 
a single quadruplet of non-real and non-imaginary points. Hence all of the spectrum is unstable and the graphical 
method easily predicts spectral instability in this case.
\end{example}

\begin{example}[Stability with indefinite Hamiltonians.] \label{ex:indef-nonreal}
If $J$ and $L$ are not both real the full Hamiltonian symmetry and hence the left-right symmetry of the union 
of eigenvalue branches $\mu(\lambda)$ is broken. In the right-hand panel of Fig.\ \ref{fig1}  one such example is shown 
corresponding to
\begin{equation}
L = \frac{1}{2} 
 \begin{pmatrix} 2 & -8 & 5 & 0\\ -8 & -3 & 0 & 0\\ 5 & 0 & 9 & -8 \\ 0 & 0 & -8 & 13  
\end{pmatrix}, \quad 
J = -i  \begin{pmatrix} 0 & 1 & 0 & 0\\ 1 & 0 & 0 & 0\\ 0 & 0 & 0 & 1 \\ 0 & 0 & 1 & 0  
\end{pmatrix}.
\label{ex3}
\end{equation}
Here we see  
one eigenvalue branch $\mu(\la)$ of $\LL(\la)$ intersecting $\mu = 0$ transversely at four nonzero locations.
Since $\dim(X)=4$, this implies $\sigma(J\!L) \subset i\RR\setminus\{0\}$, and the graphical method predicts spectral stability.
Note that $L$ is indefinite in this case showing that indefiniteness of $L$ does not imply the existence of unstable spectrum.  
That is, while definiteness of $L$ implies stability, the converse is false.   
\end{example}

\subsection{Use of the graphical method to obtain Krein signatures}
These three examples have illustrated the use of the graphical method to count purely imaginary points in $\sigma(J\!L)$, 
which are encoded as zero intercepts of the eigenvalue curves $\mu=\mu(\la)$.  We now wish to dispel any impression that the location 
of the intercepts might be the only information contained in plots like those in the right-hand panel of Fig.\ \ref{figintro} 
and in Fig.\ \ref{fig1}. 
This requires that we introduce briefly the notion of the Krein signature of a purely imaginary point $\nu\in\sigma(J\!L)\cap i\mathbb{R}$.

Krein signature theory \cite{Arnold,ArnoldAvez,Krein1950, Krein1951} allows one to understand aspects of the dynamics 
of Hamiltonian flows near equilibria (see \cite{Meiss} for a recent review).  Equilibria that are local extrema 
of the associated linearized energy are energetically stable, and this is the situation described in Example~1 above.   
However, even in cases when an equilibrium is not a local extremum, the energetic argument can still predict stability
if the linearized energy is definite on an invariant subspace; the fact is that many Hamiltonian systems of physical interest 
have constants of motion independent of the energy (e.g. momentum) and this means that effectively the linearized 
dynamics should be considered on a proper invariant subspace of the linearized phase space corresponding 
to holding the constants of motion fixed. Therefore, in such a situation only certain subspaces are relevant 
for the linearized energy quadratic form, and definiteness of the form can be recovered under an appropriate symplectic 
reduction. Of course, for \eqref{dydtexamples}, the invariant subspaces are simply the root spaces of $J\!L$, and for 
the particular case of genuine characteristic vectors $u$ corresponding to $\nu\in\sigma(J\!L)$, it is easy to see how 
the definiteness of the linearized energy relates to stability.  Indeed, if $J\!Lu=\nu u$, then for invertible $J$ 
we have $Lu=\nu J^{-1}u=i\nu Ku$, and by taking the inner product with $u$ one obtains the identity $(Lu,u) = i \nu (Ku, u)$.   Selfadjointness of $L$ and $K$ then implies
\begin{equation}
0 = \Re\{\nu\}(Ku,u) \quad \text{and}\quad (Lu,u)=-\Im\{\nu\}(Ku,u).
\label{eq:Real-Im}
\end{equation}
If $\nu$ lies in the unstable spectrum, then $\Re\{\nu\}\neq 0$ and the first equation requires that $(Ku,u)=0$ which from the second
equation implies $E[u]=(Lu,u)=0$, so the linearized energy is indefinite on the subspace.  On the other hand, if $\nu$ is purely imaginary, 
then the first equation is trivially satisfied
but the second gives no information about $(Lu,u)$.  This calculation suggests that $(Lu,u)$, or equivalently $(Ku,u)$, carries nontrivial 
information when $\nu\in\sigma(J\!L)\cap i\mathbb{R}$.  We will define%
\footnote{\label{f:Ks}Unfortunately, it is equally common in the field for the Krein signature 
to be defined as $\mathrm{sign}(Lu,u)$, and while the latter is more obviously connected to the linearized energy, 
our definition is essentially equivalent according to \eqref{eq:Real-Im} and provides a more direct generalization beyond
the context of linear pencils considered here.} the \emph{Krein signature} of a simple purely imaginary 
point $\nu\in \sigma(J\!L)$ (or equivalently of the purely real characteristic value $\lambda=i\nu$ of $\LL$) as follows:
\begin{equation}
\kappa(\la):=-\sign(Ku,u),\quad \text{where $\la\in \mathbb{R}$, $J\!Lu=-i\la u$, $K=(iJ)^{-1}$, and $u\neq 0$}.
\label{simpleKreinexamples}
\end{equation}
For simple $\nu\in i\RR$, $(Ku,u)$ is necessarily nonzero%
\footnote{The argument is as follows:  since $\nu=-i\la$ is a simple point of $\sigma(J\!L)$, the space $X$ can be decomposed as 
$X=\mathrm{span}\{u\}\oplus X_c$ where $X_c$ is the complementary subspace invariant under $J\!L$, and 
$J\!L+i\la$ is invertible on $X_c$.  Therefore, $v\in X_c$ can be represented in the form $v=(J\!L+i\la)w$ 
for some $w\in X_c$, and it follows that for all $v\in X_c$ we have
\[
(J^{-1}u,v)=(J^{-1}u,(J\!L+i\la)w)=-((LJ+i\overline{\la})J^{-1}u,w)=-(J^{-1}(J\!L+i\overline{\la})u,w).
\]
Since $\la\in \mathbb{R}$ and $J\!Lu=-i\la u$ we see that $(J^{-1}u,\cdot)$ vanishes on $X_c$.
But since $J$ is invertible this form cannot vanish on all of $X$ and hence it must be definite on $X\ominus X_c$ 
implying that $(J^{-1}u,u)\neq 0$.  See \cite{KP} for more details.} 
by the invertibility of $J$.

Consider  a  simple point $\nu=-i\la$ in $\sigma(J\!L)\cap i\RR$.  If  the matrices $J$ and $L$ are subjected to 
sufficiently small admissible perturbations, then (i) $\nu$ remains purely imaginary and simple and (ii) $\kappa(\la)$ remains constant; 
as an integer-valued continuous function (of $J$ and $L$), the only way a Krein signature $\kappa(\la)$ can change under perturbation 
is if $\nu=-i\la$  collides with another nonzero purely imaginary eigenvalue.
From this point of view, one of the most important properties of the Krein signature is that it captures the susceptibility of a 
point $\nu\in\sigma(J\!L)\cap i\RR$ to  \emph{Hamiltonian-Hopf bifurcation}  \cite{ArnoldAvez,vdM,YS} (see also MacKay \cite{MacKay} 
for a geometric interpretation of the Krein signature within this context) in which two simple purely imaginary points of $\sigma(J\!L)$ 
collide under  and leave the imaginary axis.  Indeed, for bifurcation to occur, it is necessary that the colliding points have
opposite Krein signatures.  In fact, this condition is also sufficient, in the sense that if satisfied there
exists an admissible deformation of $J\!L$ that causes the Hamiltonian-Hopf bifurcation to occur.
On the other hand, imaginary points of $\sigma(J\!L)$ with the same Krein signatures cannot leave 
the imaginary axis  even if they collide under perturbation.

The definition \eqref{simpleKreinexamples} makes the Krein signature $\kappa(\la)$ appear as either 
an algebraic quantity (computed via inner products),
or possibly a geometric quantity (measuring relative orientation of root spaces with respect to the positive and 
negative definite subspaces of the linearized energy form).  
We would now like to emphasize a third interpretation
of the formula, related to the graphical method for linearized Hamiltonians introduced above.
Indeed, from the point of view of Krein signature theory, the main advantage of the reformulation of \eq{JL} as \eq{gpencil} 
is that if $\nu_0$ is a purely imaginary simple point in $\sigma(J\!L)$ corresponding
to the intersection of an eigenvalue branch $\mu(\la)$ with $\mu=0$ at the real characteristic value $\la=\la_0=i\nu_0$ of $\LL$,
then the Krein signature $\kappa(\la_0)$ turns out to have a simple interpretation as the  sign of the
slope of the branch at the intersection point:
\begin{equation}
\kappa(\la_0) = \sign \left[\frac{d \mu}{d \la}(\la_0) \right]\, . 
\label{simpleKrein2examples}
\end{equation}
To prove  \eq{simpleKrein2examples}, 
one differentiates  \eq{gpencil} with respect to $\lambda$ at $\la = \la_0$ and $\mu = 0$, obtaining the equation 
$(L-\la_0K)u'(\la_0)=(\mu'(\la_0)+K)u$. Taking the inner product  with the characteristic vector $u=u(\la_0)$ 
satisfying $(L-\la_0K)u=0$ and using selfadjointness of $L-\la_0K$ for $\la_0\in\mathbb{R}$ gives
\begin{equation}
\mu'(\la_0) (u, u) = -(Ku, u).
\label{derivKreinexamples}
\end{equation}
Since $(u,u)>0$, the expression \eq{simpleKrein2examples} immediately follows from the definition \eq{simpleKreinexamples}.
This shows that not only are the locations of the intercepts of the curves $\mu=\mu(\la)$ important, but it is also useful 
to observe the way the curves cross the $\mu=0$ axis.

Therefore, without computing any inner products at all, we can read off the Krein signatures
of the imaginary spectrum of $J\!L$ in Examples~\ref{ex:posdef} and \ref{ex:indef-nonreal} above simply from looking at the 
diagrams in the corresponding panels of Fig.\ \ref{fig1}.  For Example~\ref{ex:posdef}, the signatures of the negative characteristic values 
are $\kappa=1$ while those of the positive characteristic values are $\kappa=-1$, and the only possibility for the system 
to become structurally unstable to Hamiltonian-Hopf bifurcation would be for a pair of characteristic values to collide 
at $\la=0$ (forced by full Hamiltonian symmetry), and this clearly requires $L$ to become indefinite under perturbation.  
On the other hand,  Example~\ref{ex:indef-nonreal} represents a somewhat more structurally unstable case; 
in order of increasing $\la$ the signatures are $\kappa=1,-1,1,-1$.  
Therefore all of the pairs of adjacent real characteristic values are susceptible to Hamiltonian-Hopf bifurcation.
We remind the reader that this simple graphical identification of the Krein signatures and potential Hamiltonian-Hopf 
bifurcations is due to our choice to define the signature $\kappa$ as $-\sign (Ku,u)$ as opposed to $\sign (Lu,u)$ 
(see also footnote \ref{f:Ks} above). The quantity $(Lu,u)$ entails an additional change of sign every time a purely imaginary 
point in $\sigma(J\!L)$ crosses the origin \cite{KP}, and as a result if one defines Krein signatures using $(Lu,u)$ one has to 
treat the potential Hamiltonian-Hopf bifurcation at the origin as a kind of special case.  Indeed, as $L$ is positive definite in
Example~\ref{ex:posdef}, the quantity $(Lu,u)$ is positive although a Hamiltonian-Hopf bifurcation is indeed possible at the origin.

The formula \eqref{simpleKrein2examples} yields perhaps the easiest proof that
simple real characteristic values of the same Krein signature cannot undergo Hamiltonian-Hopf bifurcation even should they collide; 
locally one has two branches with the same direction of slope, and the persistent reality of the roots as the branches evolve 
can be seen as a consequence of the Intermediate Value Theorem.  

\subsection{Generalizations.  Organization of the paper}
The notion of Krein signature of real characteristic values has become increasingly important 
in the recent literature in the subjects of nonlinear waves, oscillation theory, and integrable systems
\cite{BSS, BronskiJohnson, BJK, ChagPel, KapHar,Kap,Kirillov, KollarH,KP,VP}. In many of these
applications, the situation is more general than the one we have considered so far.  One 
direction in which the theory can be usefully generalized is to replace the matrix $\LL=L-\lambda K$ with 
a more general Hermitian matrix function of a real variable $\lambda$ resulting in a \emph{matrix pencil} 
that is generally nonlinear in $\lambda$.  Another desirable generalization is to be able to work in infinite-dimensional 
spaces where the operators involved are, say, differential operators as might occur in wave dynamics problems. 
The basic graphical method described above can also be applied in these more general settings.

The main applications we have in mind are for linearized Hamiltonian systems of the sort that arise in spectral stability 
analysis of nonlinear waves, i.e., linear eigenvalue problems.  However
many of the ideas used here can be traced back to a nearly disjoint but well-developed body of
literature concerning nonlinear eigenvalue pencils and matrix polynomials \cite{GLRmatrix,GLR,Markus}. 
In particular, the pioneering works of Binding and co-workers \cite{BinBrown1988,BinVol1996} (see also references therein) 
use a graphical method in the context of Sturm-Liouville problems to detect Krein signatures and other properties of spectra of  
operator pencils,  yielding results similar to those presented here. The thought to connect aspects of this theory to problems in 
stability of nonlinear waves appears to have come up quite recently, although similar ideas were already 
used  in \cite{KLjub}. In \S\ref{s:pencils} we review the basic theory of matrix and operator pencils, which lays the groundwork 
for both the generalization to nonlinear dependence on the characteristic value and the generalization to infinite-dimensional spaces.  
Then, in \S\ref{s:Krein} we give a precise definition of Krein signature along the lines of \eq{simpleKreinexamples} and 
show how also for operator pencils there is a way to deduce the Krein signature from the way 
that an eigenvalue curve passes through $\mu=0$, a procedure that
is a direct generalization of the alternate formula \eqref{simpleKrein2examples}.

In \S\ref{s:Evans} we consider the problem of relating Krein signatures to a
common tool used to detect eigenvalues, the so-called \emph{Evans function} 
\cite{AGJ,Evans4, PegoWeinstein}. 
Unfortunately, attempts to deduce the Krein signature of eigenvalues from properties of the 
Evans function itself are easily seen to be inconclusive at best.  
However, the graphical (or perhaps topological) interpretation of the Krein signature as in \eqref{simpleKrein2examples} 
suggests a simple way to modify the traditional definition of the Evans function in such a way that the Krein signatures are all captured.  
This modification is even more striking when one realizes that 
the Evans function itself is based on a (different) topological 
concept \cite{AGJ}, a Mel'nikov-like transversal intersection between stable and unstable manifolds. 
We call this modification of the Evans function
the \emph{Evans-Krein function}, and we describe it also in \S\ref{s:Evans}.  
The main idea is that, while in the linearization of Hamiltonian systems the Evans function restricted to the imaginary 
$\nu$-axis characterizes the \emph{product} of individual algebraic root factors,  the Evans-Krein 
function is able to separate these factors with the help of the additional parameter $\mu$. 
The use of the Evans-Krein function therefore allows these different root factor 
branches to be traced numerically, without significant changes to existing Evans function evaluating codes. 

In \S\ref{s:Counts} we extend the kind of simple arguments used to determine spectral stability in
Examples \ref{ex:posdef}--\ref{ex:indef-nonreal} above to give short and essentially topological proofs of some of the well-known
index theorems for nonselfadjoint spectral problems  that were originally proven by very different, 
algebraic methods \cite{Grillakis1988,Jones1988,Kap,KKS,Pel}.  To keep the exposition as simple as possible, we
present our new proofs in the finite-dimensional setting.  The added value of the graphical approach is that it makes 
the new proofs easy to visualize, and hence to remember and generalize. 
We conclude in \S\ref{s:Discussion} with a brief discussion of related open problems. 
For the readers convenience, two of the longer and more technical proofs of results from the theory of operator pencils are given in full detail in the Appendix.

Our paper features many illustrative examples.  Readers trying to understand the subject for the first time may find 
it useful to pay special attention to these.
\subsection{Acknowledgments}
Richard Koll\'ar was  supported by National Science Foundation under grant DMS-0705563
and by the European Commission Marie Curie International Reintegration Grant 239429. 
Peter D. Miller was supported by National Science Foundation under grants DMS-0807653 and DMS-1206131.
The authors would also like to thank Paul Binding for comments that clarified our notation and
Oleg Kirillov for pointing us to important references.

\section{Matrix and Operator Pencils\label{s:pencils}}
\subsection{Basic terminology and examples}
In the literature the terms \emph{operator pencil} or \emph{operator family} frequently refer to the same type of object: 
a collection of linear operators depending on a complex parameter lying in an open set $S\subset\CC$, 
that is, a map $\LL=\LL(\lambda)$ from $\lambda\in S$ into a suitable 
class of linear operators from one Banach space, $X$, into another, $Y$.  
Perhaps the most common type of pencil is a so-called \emph{polynomial pencil} for which $\LL(\lambda)$ is simply a polynomial 
in $\lambda$ with coefficients that are fixed linear operators.  (The sub-case of a linear pencil has already 
been introduced in Section~\ref{s:Example}.)  
If $X$ and $Y$ are finite-dimensional Banach spaces, we have the special case of a \emph{matrix pencil}.  The coefficients of 
a polynomial matrix pencil $\LL(\la)$ can (by choice of bases of $X$ and $Y$) be represented as constant matrices of the same dimensions.  
We will only consider the case in which $X$ and $Y$ have the same dimension, in which 
case the coefficient matrices of a polynomial matrix pencil are all square. Specializing in a different direction, if  $X=Y$ is 
a (self-dual) Hilbert space, a pencil (operator or matrix) is said to be \emph{selfadjoint} if $S=\overline{S}$ and
$\LL(\overline{\la})=\LL(\la)^*$. With the choice of an appropriate orthonormal basis, the coefficients of a  
selfadjoint polynomial matrix pencil all become Hermitian matrices.
 
An operator pencil consisting of \emph{bounded} linear operators $\LL(\la)$ on 
a fixed (possibly infinite-dimensional) Banach space $X$ is said to be \emph{holomorphic} at $\lambda_0\in S$
if there is a neighborhood $D$ of $\lambda_0$ in which $\LL(\la)$ can be expressed  
as a convergent (in operator norm) power series in $\lambda$ (see also \cite[pp.~55--56]{Markus}).  
If $D$ contains a real open interval $I \subset \RR\cap S$, we say that $\LL$ 
is \emph{continuously differentiable} on $I$ if the restriction of $\LL$ to $I$ is continuously norm-differentiable. 
For holomorphic (continuously differentiable) matrix pencils, the individual matrix entries of the matrix $\LL(\la)$ 
are all holomorphic functions of $\la$ near $\lambda_0$ (continuously differentiable functions on $I$).

The theory of operator pencils, and matrix pencils in particular, is well-developed in the literature. 
The review articles by Tisseur and Meerbergen \cite{tm} and  Mehrmann and Voss \cite{mv}
are  general references that give a numerous applications of operator pencils and survey suitable numerical methods for their study.
Polynomial pencils are particularly well-understood and have an extensive spectral theory \cite{GLRmatrix,GLR,Markus}.
While most of the theory is concerned with matrix pencils
(sometimes also called \emph{$\lambda$-matrices} or \emph{gyroscopic systems})
some results have been obtained for general operator pencils (see \cite{KLa, KLb, Markus} and references therein).
The spectral theory of $\lambda$-matrices under various conditions was developed in detail by Lancaster {\it et al.}
\cite{hlr,Lancaster, lt, lz}; see also \cite{lmz} for the related subject of perturbation theory of analytic matrix functions. 

In order to better motivate the theory of linear and nonlinear operator pencils, we first
give some concrete examples of how they arise naturally in several applications.

\begin{example}[Spectral problems in inverse-scattering theory for integrable partial differential equations.]
\label{ex:integrable}
As is well known, some of the most interesting and important nonlinear partial differential equations of mathematical physics 
including the \emph{modified focusing nonlinear Schr\"odinger equation}
\begin{equation}
i\phi_t + \frac{1}{2}\phi_{xx} + |\phi|^2\phi + i\alpha (|\phi|^2\phi)_x=0,\quad\alpha\in\mathbb{R}
\label{eq:MNLS}
\end{equation}
governing the envelope $\phi(x,t)$ of (ultrashort, for $\alpha\neq 0$) pulses propagating in weakly nonlinear and strongly 
anomalously dispersive optical fibers, and the \emph{sine-Gordon equation}
\begin{equation}
u_{tt}-u_{xx}+\sin(u)=0
\label{eq:SG}
\end{equation}
arising in the analysis of denaturation of DNA molecules and the modeling of superconducting Josephson junctions, 
are \emph{completely integrable systems}.  One of the key implications of complete integrability is the existence 
of an \emph{inverse-scattering transform} for solving the Cauchy initial-value problem for $x\in\mathbb{R}$ in which 
\eqref{eq:MNLS} is given the complex-valued Schwartz-class initial condition $\phi(x,0)=\phi_0(x)$ and \eqref{eq:SG} 
is given the real-valued Schwartz-class\footnote{More generally, one only assumes that $\sin(f(x))$ is Schwartz class to admit the 
physically interesting possibility of nonzero \emph{topological charge} in the initial data in which the angle $f$ increases by a nonzero
integer multiple of $2\pi$ as $x$ ranges over $\mathbb{R}$.} initial conditions $u(x,0)=f(x)$ and $u_t(x,0)=g(x)$, 
and in each case the solution is desired for $t>0$.  The inverse-scattering transform explicitly associates the initial 
data for each of these equations to a certain auxiliary linear equation involving a spectral parameter, the spectrum of 
which essentially encodes all of the key properties of the solution for $t>0$.  For the modified focusing nonlinear Schr\"odinger 
equation \eqref{eq:MNLS} with $\alpha\neq 0$ the auxiliary linear equation for the inverse-scattering transform is the so-called 
WKI spectral problem due to Wadati {\it et al.} \cite{WadatiKI79} with spectral parameter $\xi$ and vector-valued unknown $v = v(x)$:
\begin{equation}
\mathcal{L}_\mathrm{WKI}(\xi)v=0,\quad\text{where}\quad \mathcal{L}_\mathrm{WKI}(\xi):=\xi^2W_2 +\xi W_1+W_0,
\end{equation}
involving a quadratic pencil $\mathcal{L}_\mathrm{WKI}(\xi)$, with coefficients
being the linear operators
\begin{equation}
W_2:=2i\sigma_3,\quad W_1:=
-2i\alpha\begin{pmatrix}0 & \phi_0\\\overline{\phi}_0 & 0\end{pmatrix},\quad
W_0:=\alpha\frac{d}{dx} -\frac{1}{2}i\sigma_3.
\label{eq:WKI}
\end{equation}
Here $\sigma_3$ is a Pauli spin matrix\footnote{The Pauli spin matrices are:
\[
\sigma_1:=\begin{pmatrix}0 & 1\\1 & 0\end{pmatrix},\quad\sigma_2:=\begin{pmatrix}0 & -i\\
i & 0\end{pmatrix},\quad\sigma_3:=\begin{pmatrix}1 & 0 \\ 0 & -1\end{pmatrix}.
\]
}.
In the special case $\alpha=0$, one has to use a different auxiliary linear equation known as the Zakharov-Shabat problem \cite{ZakharovS72}
which is commonly written in the form
\begin{equation}
\frac{dv}{dx}=-i\xi\sigma_3v +\begin{pmatrix}0 & \phi_0\\-\overline{\phi}_0 & 0\end{pmatrix}v
\end{equation}
which by multiplication on the left by $i\sigma_3$ is obviously equivalent to a usual eigenvalue problem 
for an operator that is non-selfadjoint with respect to the $L^2(\mathbb{R})$ inner product (augmented in the obvious 
Euclidean way for vector functions $v$).  However, note that if one sets $\lambda=i\xi$, $v(x)=e^{i\pi\sigma_3/4}w(x)$, 
and multiplies through on the left by $i\sigma_1e^{-i\pi\sigma_3/4}$, 
the Zakharov-Shabat problem is also equivalent to the equation
\begin{equation}
\mathcal{L}_\mathrm{ZS}(\la)w=0,\quad\text{where}\quad \mathcal{L}_\mathrm{ZS}(\la):=\la Z_1+Z_0
\end{equation}
involving a linear pencil $\mathcal{L}_\mathrm{ZS}(\la)$ with coefficients
\begin{equation}
Z_1:=\sigma_2,\quad Z_0:=i\sigma_1\frac{d}{dx} -\begin{pmatrix}\overline{\phi}_0 &0\\ 0 & \phi_0\end{pmatrix}.
\end{equation}
On the other hand, the initial-value problem for the sine-Gordon equation \eqref{eq:SG} is solved by means of 
the Faddeev-Takhtajan problem:
\begin{equation}
\mathcal{L}_\mathrm{FT}(\xi)v=0,\quad\mathcal{L}_\mathrm{FT}:=\xi F_1+F_0+\xi^{-1}F_{-1},
\label{eq:FT}
\end{equation} 
a problem for a rational operator pencil with coefficients
\begin{equation}
F_0:=4i\frac{d}{dx}-g\sigma_2\quad F_{\pm 1}:=\sin(\tfrac{1}{2}f)\sigma_1\mp
\cos(\tfrac{1}{2}f)\sigma_3.
\end{equation}
In all of these cases, the values of $\xi$ for which there exists a nontrivial $L^2(\RR)$ solution $v$
parametrize the amplitudes and velocities of the \emph{soliton} components of the solution of the corresponding 
(nonlinear) initial-value problem.  The solitons are localized coherent structures that appear in the long-time limit, and 
as such it is important to have accurate methods to determine the location of any such discrete spectrum. Bounds for the discrete
spectrum of the WKI spectral problem can be found in \cite{DiFrancoM08}, and very sharp results that under some natural qualitative
conditions on the initial data confine the discrete spectrum to the imaginary axis for the Zakharov-Shabat problem and the unit 
circle for the Faddeev-Takhtajan problem were found by Klaus and Shaw \cite{KlausShaw} and by Bronski and Johnson
\cite{BronskiJohnson} respectively.  In particular, the techniques used in \cite{BronskiJohnson,KlausShaw} can be interpreted 
in terms of Krein signatures.  Indeed, one of the key conditions required in \cite{KlausShaw} is that the initial condition 
$\phi_0(x)$ is a real-valued function, which makes $\LL_\mathrm{ZS}(\la)$ a (formally) \emph{selfadjoint} linear pencil. The significance of
selfadjointness will become clear later.
\end{example}

\begin{example}[Hydrodynamic stability.]
Consider a steady plane-parallel 
shear flow of a fluid in a two-dimensional channel with horizontal coordinate $x\in\mathbb{R}$ and vertical coordinate $z\in (z_1, z_2)$,
and let $U=U(z)$ be the  horizontal velocity profile.
If the stream function of such a flow is perturbed by a  Fourier mode of the form $\Phi(z)e^{ik(x-ct)}$ with horizontal 
wavenumber $k\in\mathbb{R}$,  then in the case of an inviscid fluid the dynamical stability of such a flow 
is governed by the \emph{Rayleigh equation} \cite{DR}:
\begin{equation}
\mathcal{L}_\mathrm{R}(c)\Phi=0,\quad \mathcal{L}_\mathrm{R}(c):=c L_1 +L_0,
\label{eq:Rayleigh}
\end{equation}
where $\Phi$ is subjected to the boundary conditions $\Phi(z_1)=\Phi(z_2)=0$, and where
$\mathcal{L}_\mathrm{R}(c)$ is 
a linear operator pencil with coefficients
\begin{equation}
L_1:=\frac{d^2}{dz^2}-k^2,\quad L_0:=U''-UL_1.
\end{equation}
For a viscous fluid, the unperturbed flow is characterized by the Reynolds number $R$, and its stability is determined 
from the \emph{Orr-Sommerfeld equation} \cite{DR}:
\begin{equation}
\mathcal{L}_\mathrm{OS}(c)\Phi=0,\quad\mathcal{L}_\mathrm{OS}(c):=cM_1 +M_0,
\label{eq:Orr-Sommerfeld}
\end{equation}
where $\Phi$ is subjected to the ``no-slip'' boundary conditions $\Phi(z_1)=\Phi(z_2)=0$ and $\Phi'(z_1)=\Phi'(z_2)=0$, and where  
$\mathcal{L}_\mathrm{OS}(c)$ is a linear operator pencil with coefficients
\begin{equation}
M_1:=ikRL_1,\quad M_0:=ikRL_0+
L_1^2.
\end{equation}
In both cases, the values of $c\in\mathbb{C}$ for which there exists a nontrivial solution $\Phi$ are associated 
with exponential growth rates of $-ikc$, and hence the flow is stable to perturbations of wavenumber $k$ if the corresponding
values of $c$ are all real.
\end{example}

\begin{example}[Traveling wave stability in Klein-Gordon equations.]
Let $V:\RR\to\RR$ be a potential function.  The Klein-Gordon equation (of which the sine-Gordon equation \eqref{eq:SG} 
is a special case for $V(u):=1-\cos(u)$) is
\begin{equation}
u_{tt}-u_{xx}+V'(u)=0.
\label{eq:KG}
\end{equation}
Traveling wave solutions $u(x,t)=U(z)$, $z=x-ct$, of phase speed $c$ satisfy the Newton-type equation $(c^2-1)U''(z)+V'(U(z))=0$. 
The wave is stationary in the co-moving frame in which \eqref{eq:KG} is rewritten for $u=u(z,t)$ in the form
\begin{equation}
u_{tt}-2cu_{zt}+(c^2-1)u_{zz}+V'(u)=0.
\end{equation}
Writing $u=U(z)+v(z,t)$ and linearizing for $v$ small one obtains
\begin{equation}
v_{tt}-2cv_{zt}+(c^2-1)v_{zz}+V''(U(z))v=0,
\end{equation}
and seeking solutions of the form $v(z,t)=\phi(z)e^{i\lambda t}$ for $\lambda\in\mathbb{C}$ one arrives at 
a spectral problem involving a quadratic pencil:
\begin{equation}
\mathcal{L}_\mathrm{KG}(\la)\phi=0,\quad\mathcal{L}_\mathrm{KG}:=\la^2L_2 +\la L_1+L_0,
\label{eq:LKG-1}
\end{equation}
where the coefficients are
\begin{equation}
L_2:=1,\quad L_1:=2ic\frac{d}{dz},\quad L_0:=
(1-c^2)\frac{d^2}{dz^2} - V''(U(z)).
\label{eq:LKG-2}
\end{equation}
Note that $\mathcal{L}_\mathrm{KG}(\la)$ is an example of a (formally) selfadjoint quadratic operator pencil.  
Spectral stability is deduced \cite{JonesMMP12} if all  values of $\lambda$ for which there exists a nontrivial solution $\phi$ 
are purely real. See \cite{HSS,StSt2012} for recent general results concerning stability of traveling waves of second order 
in time problems and a list of related references.
\label{example:KG}
\end{example}

\begin{example}[The Rayleigh-Taylor problem.]
Here we give an example of an operator pencil involving partial differential operators and nonlocality (through a divergence-free constraint).
The Rayleigh-Taylor problem concerns the stability of a stationary vertically stratified incompressible viscous fluid of equilibrium 
density profile $\rho_e(z)$.  Making a low Mach number approximation, assuming a small perturbation of the zero velocity field, 
and linearizing the Navier-Stokes equations, one is led to consider normal mode perturbations of the form 
$e^{\lambda t}\mathbf{u}(x,y,z)$ where the linearized velocity field satisfies appropriate no-slip boundary conditions and 
\begin{equation}
-\lambda^2 \rho_e {\mathbf u} + \nabla \cdot (\rho_e {\mathbf u}) {\mathbf g} + \lambda(\nabla p - \eta \triangle {\mathbf u}) = 
0\, , \qquad \nabla \cdot {\mathbf u} = 0,
\label{RTE}
\end{equation}
where  $\mathbf{g}$ is the gravitational acceleration field, $\eta>0$ is the viscosity, and $p$ is the pressure term needed 
to satisfy the incompressibility constraint \cite{GKP}.  A weak reformulation  on an appropriate divergence-free space 
allows  \eq{RTE} to be cast into the form of an equivalent quadratic pencil with parameter $\lambda$ on a Hilbert space.   
For non-Newtonian Maxwell linear viscoelastic fluids, the pencil that arises in the Rayleigh-Taylor problem is a cubic polynomial \cite{KollarH}.
\end{example}

The preceding examples all involve polynomial operator pencils (or, like the rational pencil appearing in the Faddeev-Takhtajan 
spectral problem in Example~\ref{ex:integrable}, that can easily be converted into such).  However, it is important to observe 
that spectral problems for non-polynomial operator pencils also occur very frequently in applications, especially those involving 
a mix of discrete symmetries and continuous symmetries for which the dispersion relation for linear waves is transcendental.  
A fundamental example is the following.
\begin{example}[Delay differential equations]
\label{ex:DDE}
Non-polynomial operator pencils appear naturally in systems of
differential equations with delays
\cite{JKM}. Consider the system
\begin{equation}
\dot{x}(t) = A x(t) + B x(t -\tau),
\label{delayeq}
\end{equation}
for $x\in\mathbb{C}^n$, where $\tau>0$ is a fixed delay
and $A, B$ are complex $n\times n$ matrices.
To study the stability of solutions to \eq{delayeq} with exponential time-dependence of the form $x(t)=e^{\la t}x_0$ 
one needs to solve the spectral problem
\begin{equation}
\LL_{\mathrm{DDE}}(\la)x_0=0,
\label{eq:delay-spectral}
\end{equation}
where $\LL_\mathrm{DDE}$ is the essentially transcendental matrix pencil
\begin{equation}
\LL_{\mathrm{DDE}}(\la) := \la \mathbb{I} - A - e^{-\tau \la} B.
\end{equation}
The existence of values of $\la\in\mathbb{C}$ with $\Re\{\la\}>0$ for which there exists a nonzero 
solution $x_0$ of \eqref{eq:delay-spectral} indicates instability of the system \eqref{delayeq}.
\end{example}

Other applications of operator pencils include the analysis of electric power systems
\cite{mr}, least-squares optimization, and vibration analysis  \cite{tm}.
Various examples of non-polynomial operator pencils are described in the documentation to the MATLAB Toolbox NLEVP \cite{BHMST},
including applications to the design of optical fibers and radio-frequency gun cavity analysis.
Non-polynomial operator pencils also arise in band structure calculations for photonic crystals \cite{EKE}, another example of 
a system in which discrete symmetry (entering through the lattice structure of the crystal) interacts with continuous symmetry 
(time translation). An application of quadratic operator pencils to second-order in time Hamiltonian equations can be found 
in \cite{BJK2}, in which a particular case of the generalized ``good" Boussinesq equation is studied. Finally, note that in 
an alternative to the approach to linearized Hamiltonians described in \S\ref{s:Example}, the so-called \emph{Krein matrix} 
method of Kapitula \cite{Kap} relates a general linearized Hamiltonian spectral problem to a non-polynomial operator pencil.

\subsection{Spectral Theory of Operator Pencils}
In this subsection we summarize the theoretical background needed for our analysis.  
In addition to a proper definition of the spectrum (in particular, the characteristic values) of an operator pencil $\LL$ and its relation to the 
spectra $\sigma(\LL(\la))$ of the individual operators $\LL(\la)$ making up the pencil $\LL$ as $\la$ varies, our method 
relies on a kind of continuity of $\sigma(\LL(\la))$ with respect to $\la$, which can be obtained with appropriate general assumptions. 
Note that even in the general case of operator pencils on infinite dimensional spaces, finite systems of eigenvalues 
have many properties similar  to those of eigenvalues of matrices \cite{Kato}. This allows us to study a wide class of problems, 
although infinite systems of eigenvalues may exhibit various types of singular behavior.  

\subsubsection{Matrix pencils}
Matrix pencils and their perturbation theory are studied 
in \cite{GLRmatrix, GLRinvsub, GLR,Kato, Markus} with a particular emphasis on polynomial matrix pencils. 
The finite dimensional setting allows a simple formulation of the spectral problem. 
\begin{definition}[Characteristic values of matrix pencils]
Let $\LL = \LL(\lambda)$ be a matrix pencil on $X=\CC^N$ defined for $\la\in S\subset\CC$. 
The characteristic values of $\LL$ are the complex numbers $\la\in S$ that satisfy the characteristic equation
$\det (\LL(\lambda)) = 0$.  The set of all characteristic values is called the spectrum of $\LL$ and is denoted $\sigma(\LL)$.
\label{def:char_matrix}
\end{definition}

While the matrices involved all have size $N\times N$ for each $\lambda\in S$,
the characteristic equation need not be a polynomial in $\la$ 
and matrix pencils in general can have an infinite number of characteristic values.
However, characteristic equations of polynomial matrix pencils of degree $p$ in $\la$ are polynomial of degree at most $pN$ in $\la$, 
with equality if and only if the leading coefficient is an invertible matrix.

On the other hand, the eigenvalues $\mu$ of the related eigenvalue problem $\LL(\la) u = \mu u$ satisfy the 
modified characteristic equation $\det(\LL(\lambda) - \mu\mathbb{I}) = 0$ \cite[II.2.1, p.~63]{Kato} 
and belong to the $\la$-dependent spectrum $\sigma(\LL(\la))$ (understood in the usual sense) of  the $N\times N$ matrix $\LL(\la)$.  
Of course, as $\mu$ solves a polynomial equation of degree $N$ in $\mu$, the total algebraic multiplicity 
of the eigenvalues $\mu$ is equal to $\dim(X)=N$, independent of $\lambda\in S$. It is well-known that if the matrix pencil $\LL$ is 
\emph{holomorphic} at $\la_0\in S$ and if all eigenvalues $\mu$ of $\LL(\la_0)$ are simple, then the $N$ 
eigenvalue branches $\mu=\mu(\la)$ are locally holomorphic functions of $\la$ near $\la_0$.  In fact this is an elementary 
consequence of the Implicit Function Theorem. The possible singularities at 
exceptional points $\lambda$ (corresponding to non-simple eigenvalues $\mu$) are well understood.
For linear pencils of the form $\LL(\lambda) = L_0 + \lambda L_1$ 
it is worth emphasizing that the individual eigenvalue functions $\mu=\mu(\la)$ are generally \emph{not} linear 
functions of $\la$; moreover, the corresponding eigenprojections $P_{\mu(\la)}$ can
have poles as functions of $\la$.  See Motzkin and Taussky \cite{MT1, MT2} for a study of special conditions 
under which the eigenvalues of linear pencils are indeed linear in $\lambda$ and the corresponding eigenprojections are entire functions.
Furthermore, mere continuity of $\LL(\lambda)$ does not imply continuity of 
eigenvectors (see \cite[II.1.5, p.110]{Kato} for specific examples). The situation simplifies 
for \emph{selfadjoint} holomorphic matrix pencils as the following theorem indicates. 
\begin{theorem}[{\cite[II.6.1, p.~120, Theorem 6.1]{Kato}}]
Let $\LL(\la)$ be a selfadjoint holomorphic matrix pencil defined for $\la\in S=\overline{S}$ 
with $\mathbb{R}\subset S$.  Then, for $\la\in\mathbb{R}$, the eigenvalue functions $\mu=\mu(\lambda)$ 
can be chosen to be holomorphic. 
Moreover, for $\lambda\in\mathbb{R}$, the (orthonormal) eigenvectors can be chosen as holomorphic functions of $\lambda$.
\label{matrixhol} 
\end{theorem}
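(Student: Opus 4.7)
The plan is to proceed locally first: fix $\la_0\in\mathbb{R}$ and analyze the behavior of $\sigma(\LL(\la))$ for $\la$ near $\la_0$, and then glue the local pieces together along $\mathbb{R}$. Local holomorphy is the heart of the statement; continuity on the real line together with the usual monodromy argument then yields the global conclusion.

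For the local step, I would use the Riesz projection machinery. Let $\mu_0\in\sigma(\LL(\la_0))$ have multiplicity $m$, pick a small positively oriented circle $\Gamma\subset\CC$ around $\mu_0$ enclosing no other point of $\sigma(\LL(\la_0))$, and define
\begin{equation}
P(\la):=-\frac{1}{2\pi i}\oint_\Gamma(\LL(\la)-z\II)^{-1}\,dz.
\label{projdef}
\end{equation}
Since $\LL$ is holomorphic at $\la_0$ and $\Gamma$ avoids $\sigma(\LL(\la_0))$, the resolvent is jointly holomorphic in $(\la,z)$ in a neighborhood of $\{\la_0\}\times\Gamma$, so $P(\la)$ is a holomorphic family of projections with $\mathrm{rank}\,P(\la)\equiv m$ near $\la_0$. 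By a standard construction (solving the ODE $T'(\la)=[P'(\la),P(\la)]T(\la)$ with $T(\la_0)=\II$, and orthonormalizing to get a unitary $U(\la)$ for real $\la$), one obtains a holomorphic, unitary-on-$\mathbb{R}$ transformation carrying $\mathrm{Ran}\,P(\la_0)$ onto $\mathrm{Ran}\,P(\la)$. Then
\begin{equation}
\widetilde{\LL}(\la):=U(\la)^{-1}\LL(\la)U(\la)P(\la_0)
\label{reduced}
\end{equation}
is a holomorphic family of $m\times m$ matrices on the fixed space $\mathrm{Ran}\,P(\la_0)$, and is Hermitian for $\la\in\mathbb{R}$.

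The core analytic obstacle is to show that the $m$ eigenvalues of $\widetilde{\LL}(\la)$ can be organized into holomorphic branches. In general, the eigenvalues of a holomorphic matrix family split as Puiseux series $\mu(\la)=\mu_0+\sum_{k\ge 1}c_k(\la-\la_0)^{k/p}$ with $p\mid m$, and genuine branch points ($p\ge 2$) are possible. The key observation is that for $\la\in\mathbb{R}$ close to $\la_0$ the matrix $\widetilde{\LL}(\la)$ is Hermitian, so all its eigenvalues are real. If $p\ge 2$, then as $\la$ crosses $\la_0$ along $\mathbb{R}$ the fractional powers $(\la-\la_0)^{k/p}$ with $k\not\equiv 0\pmod p$ produce non-real values, contradicting reality of the spectrum; hence $p=1$ for every branch. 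This is the classical Rellich argument, and it is the step I expect to be the main obstacle — one must verify carefully that one cannot salvage non-trivial branching through cancellation of fractional terms in the sum, which boils down to noting that the leading non-integer exponent produces an unavoidable imaginary contribution on one side of $\la_0$.

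Once all local branches are holomorphic, I would construct the eigenvectors. Within $\mathrm{Ran}\,P(\la_0)$, for each simple holomorphic branch $\mu_j(\la)$ of $\widetilde{\LL}(\la)$ (simplicity holding off a discrete subset, and extending through it by the analytic persistence just established), pick an eigenvector analytically via the associated rank-one Riesz projection; for repeated branches iterate the construction on the invariant subspace. Normalizing and then pulling back through $U(\la)$ returns orthonormal eigenvectors of $\LL(\la)$ itself that depend holomorphically on $\la$ in a real neighborhood of $\la_0$. Finally, covering $\mathbb{R}$ by such neighborhoods and using the uniqueness of holomorphic continuation — with possible relabeling of branches on overlaps so that enumerated eigenvalues and eigenvectors match — gives global holomorphic branches $\mu(\la)$ and $u(\la)$ on $\mathbb{R}$, proving the theorem.
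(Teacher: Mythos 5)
The paper offers no proof of this theorem---it is quoted directly from Kato---so the natural benchmark is Kato's (equivalently Rellich's) argument, and your proposal is essentially a reconstruction of it: reduction by Riesz projections and a transformation function to a holomorphic family that is Hermitian on $\RR$, followed by the reality-versus-Puiseux argument to exclude fractional exponents. That half is sound, including your worry about cancellation: taking the least exponent $k/p$ with $p\nmid k$, the $p$ branches of a cycle differ at that order by the nonreal factors $e^{2\pi i hk/p}$, and reality of every branch on both sides of $\la_0$ forces $c_k=0$.

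The gap is in the eigenvector step. You extend the rank-one Riesz projections through the exceptional set ``by the analytic persistence just established,'' but what was established is persistence of the \emph{eigenvalues} only, and holomorphy of the eigenvalue branches does not imply holomorphy of the associated eigenprojections. The family $\begin{pmatrix}0&1\\\la^2&0\end{pmatrix}$ has the holomorphic eigenvalues $\pm\la$, yet its eigenprojections $\tfrac12\begin{pmatrix}1&\pm\la^{-1}\\\pm\la&1\end{pmatrix}$ have a pole at $\la=0$; so an additional input is required, and it is again selfadjointness. The standard repair: for $\la\neq\la_0$ the projection $P_j(\la)$ attached to the branch $\mu_j$ is single-valued and holomorphic on a punctured disc with at worst a pole at $\la_0$ (from the Laurent expansion of the resolvent), but for real $\la$ it is an \emph{orthogonal} projection, hence $\|P_j(\la)\|=1$; a function with at most a pole that remains bounded along a real segment running into the singularity has a removable singularity. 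Only after this does ``pick an eigenvector via the rank-one projection and normalize'' make sense at the crossing points. With that lemma inserted (and reused for identically coincident branches restricted to their common invariant subspace), the rest of your outline---orthonormalization for real $\la$ and gluing along the interval $\RR$---goes through.
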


Thus even if a $k$-fold eigenvalue $\mu$ occurs for some $\la_0\in\mathbb{R}$, locally this can be viewed as 
the intersection of $k$ real holomorphic branches $\mu=\mu(\la)$ (and similarly for the corresponding eigenvectors). 
Note that analyticity is not a necessary condition for mere differentiability of eigenvalues $\mu$ and eigenvectors 
for $\lambda\in\mathbb{R}$.  Indeed,  according to Rellich \cite{Rellich8},  
selfadjoint continuously differentiable matrix pencils have continuously 
differentiable eigenvalue and eigenfunction branches \cite[II.6.3, p.~122, Theorem 6.8]{Kato}.

\subsubsection{Operator pencils}
In passing to infinite-dimensional spaces, we want to restrict our attention to holomorphic pencils, and to handle unbounded 
(e.g. differential) operators, we need to generalize the definition given earlier.  A convenient generalization is the following.

\begin{definition}[Holomorphic families of type (A) {\cite[VII.2.1, p.~375]{Kato}}]
An operator pencil $\LL$ consisting of closed operators $\LL(\la): D(\la) \subset X \rightarrow X$, defined for $\la \in S$, 
is called a holomorphic family of type (A) if the domain $D=D(\la)$ is independent of $\la \in S$ and 
if for every $u\in D$, $\LL(\la) u$ is holomorphic as a function of $\la \in S$.  
\label{def:HolomorphicTypeA}
\end{definition}

Note that a holomorphic pencil of densely-defined bounded operators (having by definition an operator-norm convergent 
power series expansion about each $\la_0\in S$) is an example of a holomorphic family of type (A).  
Conversely, a holomorphic family of type (A) consisting of uniformly (with respect to $\la$) bounded operators is a holomorphic pencil in the original sense \cite[VII.1.1, p. 365]{Kato}.
In this context, we present the following notion of spectrum of operator pencils (see also \cite{GLR, Keldysh, KLa}).
\begin{definition}[Spectrum of operator pencils and related notions, \cite{Markus}]
Let $\LL$ be a holomorphic family of type (A) on a Banach space $X$ with domain $D$ defined for $\la\in S\subset\CC$.
A complex number $\la_0\in S$ is called a regular point of $\LL$ if $\LL(\la_0)$ has a bounded inverse on $X$. 
The set of all regular points of $\LL$ is called the resolvent set $\rho(\LL)$ of $\LL$. 
The complement of $\rho(\LL)$ in $S\subset\CC$ is called the spectrum $\sigma(\LL)$ of $\LL$. 
A complex number $\la_0\in S$ is called a characteristic value of $\LL$ if there exists a nonzero 
$u \in D$, called a characteristic vector corresponding to $\la_0$, satisfying 
\begin{equation}
\LL(\la_0) u = 0.
\label{Lu}
\end{equation}
The dimension of the kernel of $\LL(\la_0)$ is called the geometric multiplicity of $\la_0$.  
\label{def:op_spectrum}
\end{definition}

The characteristic values of $\LL$ are contained in (but need not exhaust) the spectrum $\sigma(\LL)$. 
The correct generalization of Jordan chains of generalized eigenvectors to the context of operator pencils is given by the following definition.
\begin{definition}[Root vector chains and maximality]
Let $\LL$ be a holomorphic family of type (A) on a Banach space $X$ with domain $D$ defined for $\la\in S\subset\CC$, 
and let $\la_0\in S$ be a characteristic value of $\LL$ of finite geometric multiplicity.
A sequence of vectors $\{u^{[0]},u^{[1]}, \dots, u^{[m-1]}\}$, each lying in $D$, where $u^{[0]}\neq 0$ is a characteristic vector for  
$\la_0$, is called a chain of root vectors (or generalized characteristic vectors) of length $m$ for $\la_0$ if
\footnote{By Definition~\ref{def:HolomorphicTypeA}, $\LL^{(\ell)}(\la_0)u$ is a well-defined vector in $X$ for each $\la_0\in S$ and each $u\in D$.}
\begin{equation}
\sum_{\ell=0}^q \frac{1}{\ell !} \LL^{(\ell)}(\la_0) u^{[q-\ell]} = 0,  \
q=1, \dots, m-1,\quad \LL^{(\ell)}(\la_0)u:=\left.\frac{d^\ell}{d\la^\ell}(\LL(\la)u)\right|_{\la=\la_0}.
\label{m0}
\end{equation}
A chain for $\la_0$ of length $m$ that cannot be extended to a chain of length $m+1$ is called a maximal chain for $\la_0$.  
That is, $\{u^{[0]},u^{[1]},\dots,u^{[m-1]}\}$ is maximal if $u^{[0]}\neq 0$ lies in $\Ker(\LL(\la_0))$ and 
$u^{[1]},\dots,u^{[m-1]}$ satisfy \eqref{m0}, 
but there does not exist $u^{[m]}\in D$ such
that \eqref{m0} holds for $q=m$.
\label{def:op_chains}
\end{definition}

Maximal chains of root vectors for a characteristic value $\la_0$ give rise to the notion of algebraic multiplicity for operator pencils.  
Roughly speaking, if a basis of $\Ker(\LL(\la_0))$ is chosen such that the maximal chains generated therefrom are as long as 
possible, then the algebraic multiplicity of $\la_0$ is the sum of lengths of these chains.  A more precise definition involves a flag 
(a nested sequence) of subspaces of $\Ker(\LL(\la_0))$.
\begin{definition}[The canonical flag of subspaces and canonical sets of chains]
Let $\LL$ be a holomorphic family of type (A) on a Banach space $X$ with domain $D$ defined for $\la\in S\subset\CC$, 
and let $\la_0\in S$ be a characteristic value of $\LL$ of finite geometric multiplicity $k$.  Let $X_s\subset\Ker(\LL(\la_0))$ 
denote the subspace of $\Ker(\LL(\la_0))$ spanned by characteristic vectors $u^{[0]}$ that begin maximal chains of length 
at least $s$. The sequence of subspaces $\{X_s\}_{s=1}^\infty$ is called the canonical flag of $\LL$ corresponding to the 
characteristic value $\la_0$. A set of maximal chains $\{\{u_j^{[0]},\dots,u_j^{[m_j-1]}\}\}_{j=1}^k$ is said to be canonical
if each subspace of the canonical flag can be realized as the span of some subset of the $k$ vectors $\{u_j^{[0]}\}_{j=1}^k$.  
Finally, if $\{u^{[0]},\dots,u^{[m-1]}\}$ is a chain belonging to a canonical set, then the span of these $m$ vectors is called 
the root space of $\LL$ corresponding to the characteristic value $\la_0$ and the characteristic vector $u^{[0]}$.
\label{def:op_canonicalflag}
\end{definition}

Obviously $X_1=\Ker(\LL(\la_0))$ and $X_1\supseteq X_2\supseteq X_3\supseteq\cdots$.  The definition of canonical sets 
of chains makes precise the idea of maximal chains that are ``as long as possible''.  Therefore, we are now in a position to 
properly define the algebraic multiplicity of a characteristic value.

\begin{definition}[Algebraic multiplicity of characteristic values]
\label{def:op_multiplicity}
Let $\LL$ be a holomorphic family of type (A) on a Banach space $X$ with domain $D$ defined for $\la\in S\subset\CC$, 
and let $\la_0\in S$ be a characteristic value of $\LL$ of finite geometric multiplicity $k$ with the canonical set of maximal chains 
$\{\{u_j^{[0]},\dots,u_j^{[m_j-1]}\}\}_{j=1}^k$. Then the numbers $m_1,\dots,m_k$ are called the partial multiplicities, and their sum $\alpha:=m_1+\cdots + m_k$ the algebraic multiplicity, of the characteristic value $\la_0$.  
A characteristic value $\la_0$ is called semi-simple if its geometric and algebraic multiplicities are finite and equal:  $\alpha=k$, and simple if $\alpha=k=1$.
\end{definition}%

Canonical sets of maximal chains need not be unique, but every canonical set of chains results in the same partial multiplicities, 
a situation reminiscent of Jordan chains consisting of generalized eigenvectors of a matrix.  
However, in stark contrast to a Jordan chain of a non-semi-simple eigenvalue of a fixed matrix, 
the vectors forming a chain for an operator or matrix pencil need not be linearly independent, and a generalized 
characteristic vector can be identically equal to zero. 
 Note that for holomorphic matrix pencils, 
the algebraic multiplicity of a characteristic value $\la_0$ reduces to the order of vanishing of the characteristic determinant
$\det(\LL(\la))$ at $\la_0$ (see \cite[\S1.7, p. 37, Proposition 1.16]{GLRmatrix}). The following examples 
illustrate some of the unique features of root vector chains in the simple context of matrix pencils.

\begin{example}
Consider the quadratic selfadjoint matrix pencil acting on $X=\CC^2$:
\begin{equation}
\mathcal{L}(\lambda):=\begin{pmatrix}
\lambda^2-2\lambda + 1 & 2-2\lambda
\\
2-2\lambda & \lambda^2+3
\end{pmatrix}.
\end{equation}
Since $\det(\mathcal{L}(\lambda))=(\lambda-1)^3(\lambda+1)$,  the characteristic values are $\la_0=\pm 1$.

For $\la_0=-1$, we have
$\Ker(\LL(-1))=\sspan\{u^{[0]}\}$ where $u^{[0]}:=(1,-1)^\mathsf{T}$,
so the geometric multiplicity of $\la_0=-1$ is $1$.  The maximal chain beginning with $u^{[0]}$ is $\{u^{[0]}\}$, that is, 
the condition \eqref{m0} with $q=1$ governing $u^{[1]}$ is inconsistent.  The singleton $\{\{u^{[0]}\}\}$ is therefore 
a canonical set of chains for $\la_0=-1$ with algebraic multiplicity $\alpha=1$ consistent with the linear degree of the 
factor $(\la+1)$ in $\det(\LL(\la))$.

For $\la_0=1$, we have
$\Ker(\LL(1))=\sspan\{u^{[0]}\}$ where $u^{[0]}=(1,0)^\mathsf{T}$,
so the geometric multiplicity of $\la_0=1$ is again $1$.  In this case, \eqref{m0} with $q=1$
admits the general solution
$u^{[1]}=(c_1,\tfrac{1}{2})^\mathsf{T}$ where $c_1$ is arbitrary,
and then \eqref{m0} with $q=2$ admits the general solution
$u^{[2]}=(c_2,\tfrac{1}{2}c_1-\tfrac{1}{4})^\mathsf{T}$ where $c_2$ is again arbitrary.
However \eqref{m0} with $q=3$ is inconsistent regardless of the values of the free parameters $c_1$ and $c_2$.  
Hence for any choice of the constants $c_j$, the singleton  $\{\{u^{[0]},u^{[1]},u^{[2]}\}\}$ is a canonical set of 
chains for $\la_0=1$ and we conclude that the algebraic multiplicity of $\la_0=1$ is $\alpha=3$ consistent with 
the cubic degree of the factor $(\la-1)^3$ in $\det(\LL(\la))$. Note that in the case of $\la_0=1$, 
the vectors of the (unique) chain 
in the canonical set are clearly linearly dependent, as there are three of them and the overall space $X$ has 
dimension only two. Moreover, if we choose $c_1=\tfrac{1}{2}$ and $c_2=0$, then the generalized characteristic 
vector $u^{[2]}$ vanishes identically. 
\end{example}

\begin{example}
Consider the quadratic selfadjoint  matrix pencil acting in $X=\CC^2$:
\begin{equation}
\mathcal{L}(\lambda):=\begin{pmatrix} 
\lambda^2 -\lambda & 1-\lambda 
\\
1-\lambda & \lambda^2-\lambda
\end{pmatrix},
\end{equation}
for which we again have $\det(\mathcal{L}(\lambda))=(\lambda-1)^3(\lambda+1)$ with
characteristic values $\la_0=\pm 1$.

For $\la_0=-1$ we have $\Ker(\LL(-1))=\spanv\{u^{[0]}\}$ where $u^{[0]}=(1,-1)^\mathsf{T}$.
It is easy to check that \eqref{m0} with $q=1$ is inconsistent, so $\{\{u^{[0]}\}\}$ is a canonical set of 
chains for $\la_0=-1$ and the geometric and algebraic multiplicities are both $1$.

For $\la_0=1$ we have $\LL(1)=0$ and hence
$\Ker(\LL(1))=X=\CC^2$.
The geometric multiplicity of $\la_0=1$ is therefore $2$, and hence one may select as many as two 
linearly independent vectors $u_j^{[0]}$, $j=1,2$, and each one will generate its own maximal chain.  
To understand what distinguishes a canonical set of chains, it is useful to consider $u^{[0]}$ to be a 
completely general nonzero vector in $\Ker(\LL(1))$ by writing $u^{[0]}=(r,s)^\mathsf{T}$ for $r$ and $s$ not both zero.
Then \eqref{m0} for $q=1$ reads
\begin{equation}
\begin{pmatrix}0 & 0\\0 & 0\end{pmatrix}u^{[1]}+\begin{pmatrix}1 & -1\\-1 & 1\end{pmatrix} 
u^{[0]}=0\quad\Leftrightarrow\quad \begin{pmatrix}r-s\\s-r\end{pmatrix}=0.
\end{equation}
Since $\LL(1)=0$ this equation cannot place any condition on $u^{[1]}$ at all, and moreover it is a consistent equation 
only if $s=r$.  Therefore the maximal chain generated from any nonzero vector $u^{[0]}$ not lying in the span of 
$(1,1)^\mathsf{T}$ has length $1$.  On the other hand if we take $r=s=1$, then \eqref{m0} for $q=1$ is consistent 
but places no condition on $u^{[1]}$, and hence $u^{[0]}=(1,1)^\mathsf{T}$ implies that $u^{[1]}=(c_1,c_2)^\mathsf{T}$
for arbitrary constants $c_1$ and $c_2$ (which could be taken to be equal or even zero).  It is then easy to check that 
there does not exist a choice of these constants making \eqref{m0} for $q=2$ consistent, and hence the maximal chain 
starting with $u^{[0]}=(1,1)^\mathsf{T}$ has length $2$.

In the case of $\la_0=1$ we then see that any basis of $X_1:=\Ker(\LL(1))$ that does not include a nonzero vector proportional 
to $(1,1)^\mathsf{T}$ will generate two maximal chains, each of length $1$ for a value of $\alpha=1+1=2$.  However, if one 
of the basis vectors is proportional to $(1,1)^\mathsf{T}$, then its maximal chain will have length $2$ and hence $\alpha=1+2=3$.  
Only in the latter case do the two chains make up a canonical set for $\la_0=1$, and we deduce that the algebraic multiplicity of
$\la_0=1$ is $\alpha=3$.  The corresponding subspaces of the canonical flag are $X_2:=\sspan\{(1,1)^\mathsf{T}\}$ 
and $X_s=\{0\}$ for $s\ge 3$.
\end{example}

While pencils consisting of unbounded operators occur frequently in applications, it is sometimes mathematically preferable to deal instead with bounded operators.  This can be accomplished with the use of the following result, 
the proof of which can be found in Appendix~\ref{app1}.
\begin{proposition}
Let $\LL=\LL(\la)$ be a holomorphic family of type (A) defined for $\la\in S\subset\CC$ on a domain $D\subset X$ of a Banach space $X$.  Let $\la'\in S$, and let $\delta\in\CC$ be such that $\LL(\la')+\delta\II$ has a bounded inverse defined on $X$ (without loss of generality we may assume $\delta\neq 0$ because the resolvent set of the operator $\LL(\la')$ is open).  Then there exists $\epsilon>0$ such that 
\begin{equation}
\MM(\la):=\II-\BB(\la),\quad \BB(\la):=\delta(\LL(\la)+\delta\II)^{-1}
\label{eq:MMdefine1}
\end{equation}
is a holomorphic pencil of bounded operators on $X$ for $|\la-\la'|<\epsilon$.  Moreover,
\begin{itemize}
\item[(a)] $\la_0$ with $|\la_0-\la'|<\epsilon$ is a characteristic value of $\LL$ if and only if it is a characteristic value of $\MM$.
\item[(b)] A sequence of vectors $\{u^{[0]},u^{[1]},\dots,u^{[m-1]}\}$ is a maximal chain for $\LL$ corresponding to a characteristic value $\la_0$ with $|\la_0-\la'|<\epsilon$ if and only if it is a maximal chain also for $\MM$ with the same characteristic value.
\item[(c)] The algebraic and geometric multiplicities of a common characteristic value $\la_0$ with $|\la_0-\la'|<\epsilon$ are the same for $\LL$ and $\MM$.
\end{itemize}
\label{theorem:algebraic-equivalence}
\end{proposition}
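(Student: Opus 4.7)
The plan is to first verify that $\MM$ is a holomorphic pencil of bounded operators near $\la'$, and then to transfer root-vector chain information between $\LL$ and $\MM$ by means of two algebraic identities. Since $R(\la'):=(\LL(\la')+\delta\II)^{-1}$ is bounded on $X$ with range $D$, and for a holomorphic family of type (A) the resolvent is operator-norm holomorphic wherever it exists (Kato, Ch.~VII), there is an $\epsilon>0$ such that $R(\la):=(\LL(\la)+\delta\II)^{-1}$ exists and is a holomorphic bounded-operator-valued function on $|\la-\la'|<\epsilon$.  Hence $\BB(\la)=\delta R(\la)$ and $\MM(\la)=\II-\BB(\la)$ are holomorphic bounded pencils.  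A secondary fact, verified by induction from $(\LL(\la)+\delta\II)R(\la)=\II$ and Leibniz's rule, is that each operator-norm Taylor coefficient $B_k$ of $\BB$ at $\la_0$ maps $X$ into $D$ (since $R(\la)$ does and each $\LL^{(j)}$ is defined on $D$).

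The central algebraic identities, valid on $D$, are
\begin{equation}
R(\la)\LL(\la)=\II-\delta R(\la)=\MM(\la),\qquad \LL(\la)=(\LL(\la)+\delta\II)\MM(\la).
\label{eq:plan-identities}
\end{equation}
Part (a) and the agreement of geometric multiplicities are immediate: $\LL(\la_0)u=0$ iff $u=\BB(\la_0)u$ iff $\MM(\la_0)u=0$, so $\Ker\LL(\la_0)=\Ker\MM(\la_0)$ as a common subspace of $D$.  For (b), set $u(\la):=\sum_{j=0}^{m-1}(\la-\la_0)^j u^{[j]}$; then the chain condition \eqref{m0} is equivalent to $\LL(\la)u(\la)=O((\la-\la_0)^m)$ for $\LL$-chains (which forces $u^{[j]}\in D$) and to $\MM(\la)u(\la)=O((\la-\la_0)^m)$ for $\MM$-chains (a priori only $u^{[j]}\in X$).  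The direction ``$\LL$-chain $\Rightarrow$ $\MM$-chain'' follows at once from $\MM(\la)u(\la)=R(\la)\LL(\la)u(\la)$ and operator-norm holomorphy of $R(\la)$.  Conversely, suppose $\MM(\la)u(\la)=O((\la-\la_0)^m)$.  Expanding $\BB(\la)u(\la)=\sum_n(\la-\la_0)^n\sum_{k+j=n}B_k u^{[j]}$, each Taylor coefficient lies in $D$ by the range property of the $B_k$, and matching the coefficients of orders $n<m$ in $u(\la)=\BB(\la)u(\la)+\MM(\la)u(\la)$ forces $u^{[j]}\in D$ for $j=0,\dots,m-1$.  Then $\LL(\la)u(\la)$ is a well-defined holomorphic $X$-valued function and $R(\la)\LL(\la)u(\la)=\MM(\la)u(\la)=O((\la-\la_0)^m)$; expanding in Taylor series at $\la_0$, writing $\LL(\la)u(\la)=\sum_n y_n(\la-\la_0)^n$, and using injectivity of $R_0=R(\la_0)$, an induction on $n<m$ applied to the Cauchy product shows $y_n=0$ for all $n<m$, that is, $\LL(\la)u(\la)=O((\la-\la_0)^m)$.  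This establishes chain equivalence; since maximality is nonexistence of a one-step extension, it transfers as well.

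Part (c) then follows directly from (b): the canonical flag $\{X_s\}$ and canonical sets of chains in Definition~\ref{def:op_canonicalflag} are defined entirely via the existence of chains of specified lengths, so the same data serve $\LL$ and $\MM$, and partial (hence algebraic) multiplicities coincide.  The main obstacle I expect is the ``$\MM$-chain $\Rightarrow$ $\LL$-chain'' step, because unboundedness of $\LL(\la)+\delta\II$ blocks any direct estimation of $\LL(\la)u(\la)=(\LL(\la)+\delta\II)\MM(\la)u(\la)$ from smallness of $\MM(\la)u(\la)$.  The resolution will be to first promote the $u^{[j]}$ into $D$ via the $\BB$-range argument so that $\LL(\la)u(\la)$ acquires independent meaning as a holomorphic $X$-valued function, and then to recover its vanishing order from that of the bounded quantity $R(\la)\LL(\la)u(\la)$ by Cauchy-product coefficient matching leveraging injectivity of $R(\la_0)$.
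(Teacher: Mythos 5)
Your proof is correct, and while it rests on the same two algebraic identities as the paper's argument --- namely $(\LL(\la)+\delta\II)\MM(\la)=\LL(\la)$ and the differentiated resolvent identity expressing $(\LL(\la_0)+\delta\II)\BB^{(j)}(\la_0)$ through lower-order derivatives of $\BB$ and $\LL$ (which is precisely what underlies your claim that the Taylor coefficients $B_k$ of $\BB$ map $X$ into $D$) --- the organization of part (b) is genuinely different. The paper works directly with the discrete chain equations \eqref{m0}: it runs an induction on the chain length $m$, rewrites the $\MM$-chain condition in the fixed-point form $v^{[s]}=\sum_j\tfrac{1}{j!}\BB^{(j)}(\la_0)v^{[s-j]}$, applies $\LL(\la_0)+\delta\II$, and recovers the $\LL$-chain condition after a double-sum reindexing. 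You instead encode the whole chain in the root function $u(\la)=\sum_j(\la-\la_0)^ju^{[j]}$ and recast \eqref{m0} as the statement that $\LL(\la)u(\la)$ vanishes to order $m$ at $\la_0$; the forward implication then becomes a one-line consequence of boundedness and holomorphy of $R(\la)$, and the reverse implication is a short Cauchy-product induction using injectivity of $R(\la_0)$, once the coefficients $u^{[j]}$ have been promoted into $D$ via the range property of the $B_k$. Your route eliminates the combinatorial reshuffling at the cost of having to justify the root-function reformulation and the domain-mapping property of all the $B_k$ up front; the paper's route touches only finitely many derivative identities at each inductive step and never needs to discuss $\LL(\la)u(\la)$ as a holomorphic function. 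Both treatments correctly handle the one genuinely delicate point, namely that an $\MM$-chain a priori consists only of vectors in $X$ and must be shown to lie in $D$ before $\LL$ and its derivatives can be applied.
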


\subsubsection{A special technique for polynomial operator pencils}
\label{sec:companion}
Let $\LL$ be a polynomial operator pencil of degree $p$:  $\LL(\la):=L_0+\la L_1 +\cdots + \la^p L_p$, 
where $L_j$ are all operators defined on a common dense 
domain $D$ within a Banach space $X$, and suppose that $L_p$ is invertible.  Such a pencil is obviously 
a holomorphic family of type (A).  In this case, the problem 
of characterizing the spectrum $\sigma(\LL)$ can be reduced to that of finding the spectrum (in the usual sense) 
of a single operator $C_\LL$, 
called the \emph{companion matrix} \cite{GLR,Markus}, acting on the $p$-fold Cartesian product space $X^p$.  Indeed, given $u\in D$, 
define $\mathbf{u}:=(u,\la u,\dots,\la^{p-1}u)^\mathsf{T}\in X^p$. Then the equation $\LL(\la)u=0$ on $X$ is equivalent 
to the standard eigenvalue equation $C_\LL\mathbf{u}=\la\mathbf{u}$ on $X^p$, where $C_\LL$ is the $p\times p$ matrix of operators:
\begin{equation}
C_\LL:=\begin{pmatrix}
0 & \II & 0 & \cdots & 0\\
0 & 0 & \II & \cdots & 0\\
\vdots & \vdots & \vdots & \ddots & \vdots\\
0 & 0 & 0 & \cdots & \II \\
-\tilde{L}_0 & -\tilde{L}_1 & -\tilde{L}_2 & \cdots & -\tilde{L}_{p-1}\end{pmatrix},
\ \tilde{L}_k:=L_p^{-1}L_k, \quad 0 \le k\le p-1.
\end{equation}
Here $\II$ and $0$ are respectively the identity and zero operators  acting on $X$.  The root spaces of the pencil $\LL$ 
are the images  of the invariant subspaces 
of the companion matrix $C_\LL$ under the obvious projection of $X^p$ onto the first factor $X$.
The projection $X^p\to X$ provides another explanation of how a generalized
characteristic vector can vanish; the corresponding generalized eigenvector of the companion matrix $C_\LL$ of course is nonzero, 
but its image under the projection can indeed be zero  (see \cite[Chapter 12]{GLR}).

Since groups of isolated eigenvalues of $C_\LL$ with finite multiplicities have properties similar to those of eigenvalues 
of finite-dimensional matrices, it is possible to completely characterize the chains of the pencil $\LL$ in terms of the Jordan 
chains of $C_\LL$.  The constructive algebraic proof given for matrix pencils in \cite{GLR} can be used in the operator pencil setting without modification (see also \cite[\S12]{Markus}).

\begin{theorem}[{\cite[p.~250, Proposition 12.4.1]{GLR}}]
Let $\LL$ be a polynomial operator pencil of degree $p$ with an invertible leading coefficient $L_p$, and let $C_{\LL}$ 
be the companion matrix of $\LL$. Then $\{u^{[0]}, \dots, u^{[m-1]}\}$ is a chain of length $m$ for $\LL$ as defined 
in \eq{m0}  at a characteristic value $\la_0$ if and only if the columns of the $p \times m$ matrix 
$V =  ( U, UJ_0, \dots, UJ_0^{p-1} )^\mathsf{T}$ form a standard Jordan chain for  $C_{\LL}$ 
corresponding to the same $\la_0$, where $J_0$ is the $m \times m$ matrix Jordan 
block\footnote{$J_{0kl}:=\lambda_0\delta_{k-l}+\delta_{k-l+1}$ where $\delta_i$ denotes the Kronecker delta symbol.} 
with eigenvalue $\la_0$ and $U = (u^{[0]}, \dots, u^{[m-1]})$. 
\label{propGLR}
\end{theorem}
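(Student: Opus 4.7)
The plan is to reduce the Jordan chain relation for the companion operator $C_\LL$ to the chain condition \eqref{m0} for $\LL$ by a direct block-matrix computation. The crucial observation is that the first $p-1$ block rows of $C_\LL$ have a pure shift structure, so they automatically force the block form $V = (U,UJ_0,\dots,UJ_0^{p-1})^\mathsf{T}$ and carry no information beyond it; only the bottom row produces a nontrivial identity, which after expanding $J_0^k$ by the binomial theorem coincides term-by-term with the Taylor expansion of $\LL$ at $\la_0$ appearing in \eqref{m0}.

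Concretely, I would first rewrite the Jordan chain condition in matrix form: the columns $v^{[0]},\dots,v^{[m-1]}$ of $V$ form a standard Jordan chain of $C_\LL$ at $\la_0$ if and only if $v^{[0]}\neq 0$ and $C_\LL V = V J_0$, where $J_0 = \la_0 I_m + N$ with $N$ the nilpotent shift. Writing $V$ as a stack of $p$ row-blocks $V_1,\dots,V_p$, each a $1\times m$ row of vectors in $X$, this matrix equation is equivalent to
\begin{equation*}
V_{k+1} = V_k J_0,\quad k=1,\dots,p-1,\qquad -\sum_{k=0}^{p-1}\tilde L_k V_{k+1} = V_p J_0.
\end{equation*}
The first group of identities inductively gives $V_{k+1} = V_1 J_0^{k}$, so setting $U := V_1 = (u^{[0]},\dots,u^{[m-1]})$ forces precisely the block structure asserted by the theorem in the ``only if'' direction, while in the ``if'' direction these identities are built in by definition.

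Substituting this block structure into the bottom-row identity and multiplying on the left by $L_p$ reduces the entire Jordan chain requirement to the single row equation
\begin{equation*}
\sum_{k=0}^{p} L_k U J_0^k = 0.
\end{equation*}
Expanding $J_0^k = \sum_{\ell=0}^{k}\binom{k}{\ell}\la_0^{k-\ell}N^\ell$ and observing that $U N^\ell$ has $q$-th entry $u^{[q-\ell]}$ when $\ell\le q$ and $0$ otherwise, the $q$-th entry of the displayed identity reads, after swapping the order of summation,
\begin{equation*}
\sum_{\ell=0}^{q}\Bigl(\sum_{k=\ell}^{p}\binom{k}{\ell}\la_0^{k-\ell}L_k\Bigr) u^{[q-\ell]} = \sum_{\ell=0}^{q}\frac{1}{\ell!}\LL^{(\ell)}(\la_0) u^{[q-\ell]} = 0,
\end{equation*}
which is exactly the $q$-th equation of \eqref{m0} (and for $q=0$ it becomes $\LL(\la_0)u^{[0]}=0$, the characteristic vector condition). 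Finally, the first column of $V$ is $v^{[0]}=(u^{[0]},\la_0 u^{[0]},\dots,\la_0^{p-1}u^{[0]})^\mathsf{T}$, so $v^{[0]}\neq 0$ if and only if $u^{[0]}\neq 0$, completing the equivalence in both directions.

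The main obstacle is essentially bookkeeping: one must identify $\sum_{k=\ell}^{p}\binom{k}{\ell}\la_0^{k-\ell}L_k$ with the Taylor coefficient $(1/\ell!)\LL^{(\ell)}(\la_0)$ and perform the double-sum interchange carefully, keeping track of the truncation $\ell\le\min(k,q)$. Once this algebraic identity is in hand the correspondence is essentially tautological. This viewpoint also clarifies a phenomenon noted in the preceding matrix-pencil examples, namely that generalized characteristic vectors $u^{[j]}$ may vanish or be linearly dependent: the $u^{[j]}$ are merely the projections onto the first factor $X$ of genuinely nonzero, linearly independent Jordan-chain vectors $v^{[j]}\in X^p$.
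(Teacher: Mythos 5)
Your proof is correct, and it is essentially the standard constructive block computation from \cite[Proposition 12.4.1]{GLR} that the paper cites without reproducing: the shift rows of $C_\LL$ force $V_{k+1}=UJ_0^{k}$, and the bottom row collapses via the binomial expansion of $J_0^k=(\la_0\II+N)^k$ to the Taylor-coefficient identities \eqref{m0}. The bookkeeping (the identification $\sum_{k\ge\ell}\binom{k}{\ell}\la_0^{k-\ell}L_k=\tfrac{1}{\ell!}\LL^{(\ell)}(\la_0)$, the action of $N^\ell$ on the columns of $U$, and the equivalence $v^{[0]}\neq 0\Leftrightarrow u^{[0]}\neq 0$) is all handled correctly.
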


\subsection{Indefinite inner product spaces}
Unfortunately, in the case that $\LL$ is a \emph{selfadjoint} polynomial operator pencil (with coefficients $L_j$ being 
selfadjoint operators densely defined on a Hilbert space $X$), selfadjointness is lost in the extension process and 
the companion matrix $C_{\LL}$ acting on $X^p$ is non-selfadjoint with respect to the ``Euclidean'' inner product  
on $X^p$ induced by that on $X$:  $(\mathbf{u},\mathbf{v}):=(u_1,v_1)+\cdots +(u_{p},v_p)$.
However, a calculation shows that $C_\LL$ is indeed selfadjoint with respect to an \emph{indefinite} quadratic form defined by
\begin{equation}
\langle\mathbf{u},\mathbf{v}\rangle:=(\mathbf{u},B_\LL\mathbf{v}),\quad
B_\LL:=\begin{pmatrix}L_1 & L_2 &\cdots &L_{p-1} &  L_p\\
L_2 & L_3 & \cdots & L_p & 0\\
\vdots & \vdots & \iddots & \vdots & \vdots\\
L_{p-1} & L_p &\cdots & 0 & 0\\
L_p & 0 &\cdots & 0 & 0\end{pmatrix}.
\label{eq:indefinite-form}
\end{equation}
Indeed, the Hankel-type operator matrix $B_\LL$ (selfadjoint with respect to the Euclidean inner product on $X^p$) 
intertwines the companion matrix $C_\LL$ with its adjoint $C_\LL^*$ with respect to the Euclidean inner product 
as follows: $B_{\LL} C_{\LL} = C_{\LL}^{\ast} B_{\LL}$. This implies that the root spaces in $X^p$ corresponding 
to different  eigenvalues of $C_{\LL}$ (characteristic values of $\LL$) are orthogonal with respect to 
the quadratic form $\langle\cdot,\cdot\rangle$ defined by \eqref{eq:indefinite-form}.

The extension technique therefore closely relates the theory of selfadjoint polynomial operator pencils 
to that of so-called \emph{indefinite inner product spaces} or \emph{Pontryagin spaces}. 
As  is apparent from the definition \eqref{simpleKreinexamples}, the Krein signature of a characteristic value is related to 
a certain indefinite quadratic form.   Although one of the aims of our article is the avoidance of the algebra of indefinite 
inner product spaces, the latter are clearly lying just beneath the surface, so we would like to 
briefly cite some of the related literature. The seminal work of Pontryagin \cite{Pontryagin} 
opened up a huge field devoted to the spectral properties and invariant subspaces 
of operators in such spaces having a wide variety of applications.  Important further developments of the theory were made 
in \cite{Iohvidov}, and general references for many of the key results include \cite{GK,GLR}. 
Over 40 years after its publication a central result of the theory --- the Pontryagin Invariant Subspace Theorem ---
was rediscovered in connection with nonlinear waves
and index  theorems \cite{CP,Grillakis1990, GKP} (see \cite{CP} for a historical discussion). 

\section{Graphical Interpretation of Multiplicity and Krein Signature\label{s:Krein}}
This section contains a survey of known results (with some new generalizations) connecting the characteristic value problem \eq{Lu} 
for a \emph{selfadjoint} operator pencil $\LL$ to the  family, parametrized by $\la\in\RR$, of selfadjoint eigenvalue problems 
\begin{equation}
\LL(\la) u(\la) = \mu(\la) u(\la).
\label{Lmu}
\end{equation} 
In particular, we will be concerned with \emph{real} characteristic values $\la_0$
and the corresponding root vector chains.  We will demonstrate that all of the essential information
is equivalently contained in the way the eigenvalues $\mu(\la)$ and eigenvectors $u(\la)$ of the problem \eqref{Lmu} depend on $\la$ near $\la=\la_0$.
 
In order to consider the $\lambda$-dependent spectrum (in the usual sense) of $\LL(\la)$  as an operator depending parametrically on 
$\la\in S\cap\RR$, and in particular the eigenvalues $\mu(\la)$ of $\LL(\la)$, we will restrict attention
to certain types of selfadjoint operator pencils for which the dependence of the spectrum on $\la$
is analytic for $\la\in\RR$.
The fact that holomorphic families $\LL(\la)$ of type (A) that have compact resolvent for some $\la_0\in S$ 
have a compact resolvent for all $\la \in S$ \cite[VII.2.1, p.~377, Theorem 2.4]{Kato}
can then be used to establish the following theorem%
\footnote{The part of this result that is concerned with compact operators is discussed in  \cite[VII.3.5, p.~393, Remark 3.11]{Kato}. 
The condition that the kernel is trivial is crucial; a nontrivial kernel can ruin the analyticity of eigenvalue branches passing through $\mu=0$.}. 

\begin{theorem}[Kato, {\cite[VII.3.5, p.~392, Theorem 3.9]{Kato}}]
\label{thm:operator-branches}
Let $\LL(\la)$ be a selfadjoint holomorphic family of type (A) defined for $\la \in S$ on a dense domain $D$ in a Hilbert space $X$, 
and let $I_0 \subset S$ be a real interval. If either 
\begin{itemize}
\item
$\LL(\la_0)$ has a compact resolvent for some $\la_0 \in S$ or
\item
$\LL(\la)$ is itself compact and $\ker(\LL(\la))=0$ for all $\la\in S$, 
\end{itemize}
then 
all the eigenvalues of $\LL(\la)$ can be represented as a sequence $\{\mu_j(\la)\}$ of real holomorphic functions on $I_0$. 
Furthermore, the complete orthonormal family of corresponding eigenvectors can also be represented by
a sequence of vector valued holomorphic functions on $I_0$. 
\end{theorem}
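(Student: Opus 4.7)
The plan is to reduce the problem, near any given $\la_\ast\in I_0$, to a local finite-dimensional selfadjoint matrix pencil problem via Riesz spectral projections; apply the finite-dimensional result Theorem~\ref{matrixhol}; and then patch the local holomorphic data together globally on $I_0$. I would handle both hypotheses simultaneously and distinguish them only at the end.

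First, the cited persistence of compact resolvent under holomorphic families of type (A) (Kato VII.2.1) ensures in both cases that $\LL(\la)$ has purely discrete spectrum for every $\la\in S$, with each nonzero eigenvalue having finite algebraic multiplicity; selfadjointness for $\la\in I_0$ then forces all eigenvalues to be real and semi-simple. Fix $\la_\ast\in I_0$, enumerate the distinct eigenvalues $\mu_1^\ast,\mu_2^\ast,\dots$ of $\LL(\la_\ast)$, and for each $j$ choose a small closed contour $\Gamma_j\subset\CC$ enclosing $\mu_j^\ast$ and no other points of $\sigma(\LL(\la_\ast))$. A Neumann-series estimate on the second resolvent identity shows that there exists a complex neighborhood $U_j$ of $\la_\ast$ on which $\Gamma_j\subset\rho(\LL(\la))$ and on which the Riesz projection
\begin{equation*}
P_j(\la):=-\frac{1}{2\pi i}\oint_{\Gamma_j}(\LL(\la)-z\II)^{-1}\,dz
\end{equation*}
is a holomorphic, finite-rank, operator-valued function of $\la$ whose rank equals the algebraic multiplicity of $\mu_j^\ast$.

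Second, I would invoke Kato's transformation-function construction to obtain a holomorphic family of isomorphisms $U_j(\la):\Ran(P_j(\la_\ast))\to\Ran(P_j(\la))$ on $U_j$; this conjugates the restriction of $\LL(\la)$ to $\Ran(P_j(\la))$ into a holomorphic selfadjoint \emph{matrix} pencil on the fixed finite-dimensional space $\Ran(P_j(\la_\ast))$. Theorem~\ref{matrixhol} then yields real holomorphic eigenvalue branches and a holomorphic orthonormal eigenframe on a real neighborhood of $\la_\ast$; conjugating back by $U_j(\la)^{-1}$ produces local holomorphic parametrizations of the eigenvalues and orthonormal eigenvectors of $\LL(\la)$ near $\mu_j^\ast$. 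Performing this construction for each $\mu_j^\ast$ and for each $\la_\ast\in I_0$ produces an open cover of $I_0$ by subintervals on which the desired holomorphic data exist.

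Third, I would patch these local parametrizations into global data on all of $I_0$ by exploiting connectedness and the rigidity of real analytic functions: on overlaps, two such parametrizations describe the same spectral data and must agree up to a permutation of indices and a diagonal unitary rotation of the eigenframe, which can be absorbed into a consistent global labeling. The main obstacle I anticipate lies in case (ii), where the eigenvalues of the compact operator $\LL(\la)$ can accumulate only at $\mu=0$: here the hypothesis $\Ker(\LL(\la))=0$ for every $\la\in S$ is precisely what prevents any individual branch $\mu_j(\la)$ from vanishing on $I_0$, since a zero of a branch would certify $0$ as an eigenvalue and contradict the hypothesis. This keeps each locally constructed branch isolated from the essential accumulation point $\mu=0$, so that it extends analytically throughout $I_0$ and the countable collection of branches can be enumerated consistently into a sequence $\{\mu_j(\la)\}$ with matching orthonormal eigenvector functions; in case (i), where the resolvent is compact, no such accumulation can occur and the patching is correspondingly simpler.
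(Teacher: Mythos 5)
The paper does not prove this statement at all: it is quoted verbatim from Kato with a precise citation, so there is no ``paper's proof'' to compare against. Your architecture --- local Riesz projections, Kato's transformation function to conjugate each finite spectral group onto a fixed finite-dimensional space, Rellich's finite-dimensional theorem (Theorem~\ref{matrixhol}), and analytic continuation along $I_0$ --- is exactly the standard Rellich--Kato argument underlying the cited result, so in that sense you have reconstructed the right proof.

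There is, however, one genuine thin spot: the global continuation step, which is where essentially all of the work in Kato's proof lives. First, in the compact-resolvent case you assert that ``the patching is correspondingly simpler,'' but you never rule out the failure mode specific to that case, namely an eigenvalue branch escaping to infinity at an \emph{interior} point of $I_0$ (so that its maximal interval of analytic existence is a proper subinterval). Excluding this requires the type-(A) relative bound on $\LL'(\la)$ with respect to $\LL(\la)$, which along a normalized analytic eigenvector branch gives $|\mu'(\la)|=|(\LL'(\la)u(\la),u(\la))|\le a+b|\mu(\la)|$ locally uniformly, and hence boundedness of $|\mu(\la)|$ on compact subintervals by Gronwall; this estimate is an essential ingredient, not a simplification. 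Second, in the compact case your justification via the trivial-kernel hypothesis only excludes a branch \emph{attaining} the value $0$; it does not exclude a branch $\mu(\la)$ \emph{converging} to the accumulation point $0$ as $\la$ approaches some interior $\la_1\in I_0$ without ever vanishing, in which case no eigenvalue of $\BB(\la_1)$ is available to continue the branch through $\la_1$ (unit eigenvectors would then be a Weyl sequence for $0\in\sigma_{\mathrm{ess}}(\BB(\la_1))$, which is perfectly consistent with $\Ker(\BB(\la_1))=0$). Ruling out this absorption into the essential spectrum is precisely the delicate point that the trivial-kernel hypothesis is meant to control, and it needs an argument rather than the one-line assertion you give.
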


Selfadjoint holomorphic families of type (A) with compact resolvent occur frequently in the theory of stability of nonlinear waves, as the following example shows.

\begin{example}[Bloch solutions of the linearized Klein-Gordon equation for periodic traveling waves.]
Recall from Example~\ref{example:KG} the quadratic operator pencil $\LL_\mathrm{KG}$ arising in the theory of linearized stability of traveling waves for the Klein-Gordon equation as defined
by \eqref{eq:LKG-1}--\eqref{eq:LKG-2}.  Let us assume that the traveling wave whose stability is of interest 
is periodic in the sense that $V''(U(z))$ is a bounded periodic function of $z$ with fundamental period 
$T$:  $V''(U(z+T))=V''(U(z))$ for $z\in\RR$, and that $c^2\neq 1$.  The \emph{Floquet} or \emph{Bloch} spectrum for this problem is parametrized 
by $\theta\in\RR\pmod{2\pi}$ and corresponds to 
formulating the characteristic equation $\LL_\mathrm{KG}(\la)u=0$ on the subspace $D_0$ of the domain 
$C^2([0,T])\subset L^2([0,T])$ consisting of $u$ satisfying the 
boundary conditions $u(T)=e^{i\theta}u(0)$ and $u'(T)=e^{i\theta}u'(0)$.  For $\la\in\RR$, the operator $\LL_\mathrm{KG}(\la)$ with 
these side conditions is essentially selfadjoint  
on $D_0$.  The domain $D$ of selfadjointness for $\la\in\mathbb{R}$ is determined by the highest-order derivative and hence is independent of $\la$.  Since Green's function for this problem is a Hilbert-Schmidt kernel on $[0,T]^2$, it is easy to see that the resolvent $(\LL_\mathrm{KG}(\la)-\mu\mathbb{I})^{-1}$ is compact for each $\mu$ for which it exists.  
\end{example}

Of course the spectral theory of compact holomorphic families of type (A) is obviously related to that of holomorphic families of type (A) consisting of Fredholm operators, i.e., compact perturbations of the identity.  The latter class is essentially equivalent to the class of holomorphic families of type (A) with compact resolvent, as was shown in Proposition~\ref{theorem:algebraic-equivalence}.  In fact, Theorem~\ref{thm:operator-branches} implies that
that the $\lambda$-dependent spectrum of the operators $\LL(\la)$ making up a selfadjoint family of type (A) with compact resolvent is real, discrete, and depends continuously upon $\la\in\RR$.
Therefore the constant $\delta\neq 0$ in the statement of Proposition~\ref{theorem:algebraic-equivalence} can be taken to be real, in which case the bounded holomorphic operator pencil $\BB(\la)$ defined by \eqref{eq:MMdefine1}  consists of \emph{compact} selfadjoint operators with trivial kernel for $\la'-\epsilon<\la<\la'+\epsilon$.

\subsection{Multiplicity of characteristic values}
In this section we present a theorem (Theorem~\ref{multiplicities}) well-known in the theory 
of matrix pencils (\cite[\S 12.5, p.~259]{GLR}, see also \cite{KLjub} for a similar approach in the case of operators). 
The key idea here is to relate the linear-algebraic notions of algebraic and geometric multiplicity 
of characteristic values and root vector chains to information contained in the graphs of eigenvector branches 
$\mu=\mu(\la)$ and the corresponding eigenvector branches $u=u(\la)$ of the selfadjoint eigenvalue problem \eqref{Lmu}.

Before formulating the main result, we first lay some groundwork.  Let $X$ be a Hilbert space with inner product $(\cdot,\cdot)$, and let $\LL$ be a selfadjoint operator pencil that is a holomorphic family of type (A) defined for $\la\in S=\overline{S}$ on a dense domain $D\subset X$.  We consider two further restrictions of $\LL$:   either
\begin{itemize}
\item $\LL(\la)$ has compact resolvent for some (and hence all) $\la\in S$, or
\item $\LL(\la)$ is an operator of Fredholm type, having the form $\LL(\la)=\II-\BB(\la)$ for a compact selfadjoint operator pencil $\BB$ with trivial kernel for $\la\in S\cap\RR$.  
\end{itemize}
According to Theorem~\ref{thm:operator-branches}, 
there exist both a sequence of real analytic functions $\{\mu_j(\la)\}_{j=1}^\infty$ and corresponding analytic
vectors $\{u_j(\la)\}_{j=1}^\infty$ defined for $\la\in S\cap\mathbb{R}$ such that 
$\LL(\la)u_j(\la)=\mu_j(\la)u_j(\la)$ for all $j$, and also $(u_j(\la),u_k(\la))=\delta_{jk}$ for all $j,k$.
Moreover the eigenvectors form an orthonormal basis for $X$ for each $\lambda\in S\cap\mathbb{R}$.  For $\la\in S\cap \RR$, we define a unitary operator pencil $\VV(\la):X\to\ell_2 = \ell_2(\mathbb{N})$ assigning to
a vector $w\in X$ its generalized Fourier coefficients:
\begin{equation}
\VV(\la)w:=\left\{\left(w,u_j(\la)\right)\right\}_{j=1}^\infty.
\end{equation}
By completeness, the inverse/adjoint of $\VV(\la)$ is a unitary operator $\UU(\la):\ell_2\to X$ given by
\begin{equation}
\UU(\la)\{a_j\}_{j=1}^\infty:=\sum_{j=1}^\infty a_ju_j(\la).
\end{equation}
Derivatives of $\UU$ with respect to $\la$ may be defined on suitable subspaces of $\ell_2$ by the formula
\begin{equation}
\UU^{(n)}(\la)\{a_j\}_{j=1}^\infty:=\sum_{j=1}^\infty a_ju_j^{(n)}(\la).
\label{eq:UUderivs}
\end{equation}
In particular, arbitrary derivatives of $\UU(\la)$ can be applied to sequences $\{a_j\}_{j=1}^\infty$ for which all but a finite number of the $a_j$ are zero.

Suppose now that $\la_0\in S\cap\RR$ is a real characteristic value%
\footnote{Under our assumptions on $\LL$, the characteristic value $\la_0$ is isolated on the real line, i.e., 
there exists an open real neighborhood $\mathcal{O}$ of $\la_0$ such that $\sigma(\LL) \cap \mathcal{O} = \{ \la_0 \}$. 
This fact follows from \cite[VII.4.5, p.~392, Theorem 3.9]{Kato}.}
of $\LL$.
This means that $\Ker(\LL(\la_0))$ is nontrivial, having some dimension (geometric multiplicity) $k>0$, supposed finite.  
Obviously, $\Ker(\LL(\la_0))$ is spanned by exactly those $k$ orthonormal eigenvectors $u_j(\la_0)$ for which $\mu_j(\la_0)=0$.  
Let $\{X_s\}_{s=1}^{\infty}$ be the canonical flag of $\LL$ corresponding to $\la_0$, as in Definition~\ref{def:op_canonicalflag}.
Define a flag $\{Y_s\}_{s=1}^\infty$ of subspaces of $X$ as follows:  $Y_s$ is the span of those eigenvectors $u_j(\la_0)$ 
for which $\mu^{(n)}(\la_0)=0$ for all $n=0,1,2,\dots,s-1$.  Clearly,
$Y_1:=\Ker(\LL(\la_0))=X_1$, and then $Y_1\supseteq Y_2\supseteq Y_3\supseteq \cdots$.  

The following result shows that in the current context, chains of root vectors for the real characteristic value 
$\la_0$ can be constructed from derivatives of the eigenvectors $u_j(\la)$, and this implies that
 \emph{the flags $\{X_s\}_{s=1}^{\infty}$ and $\{Y_s\}_{s=1}^\infty$ actually coincide}. 
 
\begin{proposition}
Let $\LL$ be a selfadjoint holomorphic family of type (A) defined on a dense domain $D\subset X$ either having compact resolvent or being of Fredholm form with $\II-\LL$
being a compact selfadjoint pencil with trivial kernel.  Let $\la_0\in S\cap \RR$ be a real characteristic value of $\LL$ of finite geometric multiplicity $k>0$, and  
let the flag $\{ Y_s \}_{s=1}^{\infty}$ be defined as above. Then there exists a chain of root vectors for $\lambda_0$ 
of length $m$ if and only if $u^{[0]}$ is a nonzero vector in the subspace $Y_m$, in which case it also holds that 
the chain $\{u^{[0]},u^{[1]},\dots,u^{[m-1]}\}$ has the form 
($\UU_0^{(d)}:=\UU^{(d)}(\la_0)$ and $\VV_0:=\VV(\la_0)$)
\begin{equation}
u^{[r]}=\sum_{d=0}^r\frac{1}{d!}\UU_0^{(d)}\VV_0 w^{[r-d]},\quad r=0,1,2,\dots,m-1, \ \ 
\text{where $w^{[s]}\in Y_{m-s}$.}
\label{eq:urformula}
\end{equation}
\label{prop:chains_derivatives}
\end{proposition}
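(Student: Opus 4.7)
The plan is to convert the chain conditions into a single formal-power-series identity and then solve it by projecting onto the real-analytic spectral decomposition of $\LL(\la)$ provided by Theorem~\ref{thm:operator-branches}.

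First, I would observe that the chain relations \eqref{m0} for $q=1,\dots,m-1$, together with $\LL(\la_0)u^{[0]}=0$, are exactly the statement that the vector-valued polynomial
\[
u(\la):=\sum_{r=0}^{m-1}(\la-\la_0)^r u^{[r]}
\]
satisfies $\LL(\la)u(\la)=O((\la-\la_0)^m)$ as $\la\to\la_0$, as one sees by Taylor-expanding $\LL(\la)$ about $\la_0$ and matching coefficients of each $(\la-\la_0)^q$ for $q=0,1,\dots,m-1$. The problem therefore reduces to classifying all polynomial vectors $u(\la)$ of degree less than $m$ enjoying this vanishing property.

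Next I would apply Theorem~\ref{thm:operator-branches} (invoking Proposition~\ref{theorem:algebraic-equivalence} to pass from the Fredholm-form hypothesis to the compact-resolvent setting on a neighborhood of $\la_0$ if necessary) to obtain real-analytic eigenvalue branches $\mu_j(\la)$ and an orthonormal real-analytic basis of eigenvectors $u_j(\la)$ on a real interval containing $\la_0$. By analyticity combined with the isolation of $\la_0$ on the real line noted in the footnote, each $\mu_j$ either does not vanish at $\la_0$ ($n_j=0$) or vanishes there with finite order $n_j\geq 1$; in either case, by definition of the flag, $u_j(\la_0)\in Y_{n_j}\setminus Y_{n_j+1}$. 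Writing $u(\la)=\sum_j c_j(\la)u_j(\la)$ with $c_j(\la):=(u(\la),u_j(\la))$ real-analytic, I compute $\LL(\la)u(\la)=\sum_j c_j(\la)\mu_j(\la)u_j(\la)$, and orthonormality for each fixed $\la$ decouples the vanishing condition into the scalar requirements $c_j(\la)\mu_j(\la)=O((\la-\la_0)^m)$ for every $j$. These force $c_{j,s}:=c_j^{(s)}(\la_0)/s!=0$ whenever $n_j<m-s$, which is precisely the condition that $w^{[s]}:=\sum_j c_{j,s}u_j(\la_0)\in Y_{m-s}$ (with the convention $Y_{m-s}:=\{0\}$ when $s\geq m$).

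Finally, to extract \eqref{eq:urformula}, I would substitute the Taylor expansions of $c_j(\la)$ and $u_j(\la)$ into $u(\la)=\sum_j c_j(\la)u_j(\la)$ and, by Leibniz, read off
\[
u^{[r]}=\sum_{d=0}^{r}\frac{1}{d!}\sum_j c_{j,r-d}\,u_j^{(d)}(\la_0)=\sum_{d=0}^{r}\frac{1}{d!}\UU_0^{(d)}\VV_0 w^{[r-d]},
\]
matching \eqref{eq:urformula} exactly. Setting $r=0$ yields $u^{[0]}=w^{[0]}\in Y_m$, which gives the ``only if'' direction; conversely, any nonzero $u^{[0]}\in Y_m$ generates a length-$m$ chain by taking $w^{[0]}=u^{[0]}$ and $w^{[s]}=0$ for $s\geq 1$. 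The main technical point, and the place where finiteness of the geometric multiplicity $k$ does real work, is that each $w^{[s]}$ lies in $Y_{m-s}\subseteq Y_1=\Ker(\LL(\la_0))$, which is finite-dimensional; consequently $\VV_0 w^{[s]}$ has only finitely many nonzero entries, and the formally densely-defined operator $\UU_0^{(d)}$ from \eqref{eq:UUderivs} is applied only to such finite combinations, sidestepping any convergence issues.
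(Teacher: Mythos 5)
Your proposal is correct in substance but takes a genuinely different route from the paper. The paper proves the result by induction on the chain length $m$: it writes the $q=m_0$ chain equation as an equation for $u^{[m_0]}$ using the spectral representation $\LL(\la)=\UU(\la)\DD(\la)\VV(\la)$, transfers it to $\ell_2$, and extracts the solvability conditions $\DD_0^{(s)}\tilde{w}^{[m_0-s]}=0$ one order at a time, which is where the memberships $w^{[m_0-s]}\in Y_{s+1}$ emerge. You instead package all the chain equations \eqref{m0} at once into the single ``root polynomial'' condition $\LL(\la)u(\la)=O((\la-\la_0)^m)$ for $u(\la)=\sum_r(\la-\la_0)^ru^{[r]}$ and then diagonalize, so that the whole system decouples into the scalar conditions $\mathrm{ord}_{\la_0}(c_j)+\mathrm{ord}_{\la_0}(\mu_j)\ge m$; the formula \eqref{eq:urformula} then drops out of Leibniz applied to $u(\la)=\sum_jc_j(\la)u_j(\la)$ together with the differentiated orthonormality relations. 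This is shorter, avoids the induction entirely, and makes the conceptual content (order of vanishing of the coefficient plus order of vanishing of the branch) transparent; the paper's induction, by contrast, keeps the maximality bookkeeping explicit at every step. Two caveats. First, your parenthetical about Proposition~\ref{theorem:algebraic-equivalence} has the direction reversed: the reduction goes from the compact-resolvent case \emph{to} the Fredholm form $\II-\BB(\la)$, and its real purpose is to make $\LL^{(n)}(\la_0)$ bounded on all of $X$. Second, that boundedness is what you need to justify applying $\LL(\la)$ to, and differentiating term by term, the infinite expansion $\sum_jc_j(\la)u_j(\la)$; as written these interchanges are formal (the paper makes exactly the same caveat about its own manipulations), so you should state explicitly that they are legitimated by the Fredholm reduction. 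With those two points tightened, the argument is complete, including the sufficiency direction, where your consistency check that the $u^{[r]}$ built from prescribed $w^{[s]}\in Y_{m-s}$ reproduce those $w^{[s]}$ as the low-order Taylor data of $(u(\la),u_j(\la))$ does go through via the identities obtained by differentiating $(u_i(\la),u_j(\la))=\delta_{ij}$.
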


Note that in \eqref{eq:urformula} at most $k$ elements of the $\ell_2$ vector 
$\VV_0 w^{[r-d]}$ are nonzero because $Y_{m-s}\subset Y_1=\Ker(\LL(\la_0))$, 
and hence the application of $\UU_0^{(d)}$ makes sense.
Although the matrix pencil version of Proposition~\ref{prop:chains_derivatives}  is very well understood, 
we are not aware of an analogous result in the literature for operator pencils. 
We therefore give a complete proof of Proposition~\ref{prop:chains_derivatives} in Appendix~\ref{app2}, one that is quite different from the matrix pencil case (the latter relies on the determinantal characterization of characteristic values and eigenvalues \cite{GLR}).

This result immediately yields a useful ``graphical'' notion of multiplicity of real characteristic values.
\begin{theorem}[Graphical interpretation of multiplicity]
Let $\LL$ be a selfadjoint holomorphic family of type (A) defined on a Hilbert space $X$ for $\la\in S=\overline{S}$, that either has compact resolvent or is of Fredholm form $\LL=\II-\BB$ where $\BB$ is a compact operator pencil with trivial kernel.  Let $\la_0\in S\cap\RR$ be a real characteristic value of $\LL$.  Then
the flags $\{X_s\}_{s=1}^\infty$ and $\{Y_s\}_{s=1}^\infty$ coincide, and hence a canonical set
of chains can be constructed in terms of derivatives of the eigenvectors $u_j(\la)$ at $\la=\la_0$
as in the statement of Proposition~\ref{prop:chains_derivatives}.  Moreover $\la_0$ has finite
geometric multiplicity $k$ and partial algebraic multiplicities $m_1,\dots,m_k$ if and only if
there exist exactly $k$ analytic eigenvalue branches $\mu=\mu_1(\la),\dots,\mu_k(\la)$ 
vanishing at $\la=\la_0$ where $\mu_j(\la)$ vanishes to order $m_j$, i.e., $\mu_j^{(n)}(\la_0)=0$ for $n=0,1,\dots,m_j-1$, while $\mu_j^{(m_j)}(\la_0)\neq 0$.
\label{multiplicities}
\end{theorem}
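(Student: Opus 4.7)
The plan is to derive this theorem as a direct corollary of Proposition~\ref{prop:chains_derivatives}, which already does the real analytic work by linking the existence of root vector chains to the derived flag $\{Y_s\}$. First I would verify the flag equality $X_s = Y_s$ for all $s\geq 1$. By Definition~\ref{def:op_canonicalflag}, $X_s$ is spanned by those $u^{[0]}\in\Ker(\LL(\la_0))$ that begin a maximal chain of length at least $s$, which is equivalent to the existence of any chain of length $s$ starting at $u^{[0]}$. Proposition~\ref{prop:chains_derivatives} states that this happens precisely when $u^{[0]}\in Y_s$. Since $Y_s$ is itself a subspace, $X_s=Y_s$ follows at once. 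In particular, the geometric multiplicity $k=\dim\Ker(\LL(\la_0))=\dim Y_1$ is finite under our compact resolvent or Fredholm form hypothesis, and equals the number of analytic eigenvalue branches $\mu_j(\la)$ with $\mu_j(\la_0)=0$.

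Next I would exhibit an explicit canonical set of chains built from the analytic eigenvectors of Theorem~\ref{thm:operator-branches}. Enumerate the $k$ vanishing branches as $\mu_1,\dots,\mu_k$ and let $m_j$ denote the order of vanishing of $\mu_j$ at $\la=\la_0$; by analyticity each $m_j$ is a well-defined positive integer (should any $\mu_j$ vanish identically, the trivial kernel assumption in the Fredholm case, and the fact that branches tend to $\pm\infty$ in the compact resolvent case, would be contradicted). Then $u_j(\la_0)\in Y_{m_j}\setminus Y_{m_j+1}$ by the very definition of $Y_s$, so Proposition~\ref{prop:chains_derivatives} produces a maximal chain $\{u_j^{[0]},\dots,u_j^{[m_j-1]}\}$ of length exactly $m_j$ starting at $u_j(\la_0)$, for instance via formula \eqref{eq:urformula} with the minimal choice $w^{[0]}=u_j(\la_0)$ and $w^{[r]}=0$ for $r\geq 1$. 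For each $s\geq 1$ the collection $\{u_j(\la_0): m_j\geq s\}$ spans $Y_s=X_s$ by construction, so the resulting set of $k$ chains is canonical in the sense of Definition~\ref{def:op_canonicalflag}.

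The statement about multiplicities is now immediate: the partial multiplicities $m_1,\dots,m_k$ associated with this canonical set are the orders of vanishing of the analytic branches, and conversely knowing the geometric and partial multiplicities fixes $k$ branches vanishing at $\la_0$, each to the prescribed order. The genuinely hard step has already been navigated inside Proposition~\ref{prop:chains_derivatives}, where one must pass between abstract root vector chains and Taylor expansions of analytic eigenvectors through the unitary pencils $\UU(\la)$, $\VV(\la)$; once that passage is granted, the present theorem is a clean repackaging. The only subtlety requiring attention is that the analytic orthonormal basis of $\Ker(\LL(\la_0))$ provided by Theorem~\ref{thm:operator-branches} is automatically compatible with the flag $\{Y_s\}$, since $Y_s$ is defined precisely as the span of those basis vectors whose branch vanishes to order at least $s$; this rules out the potential nuisance of having to adjust the eigenvector basis by hand to obtain a canonical set.
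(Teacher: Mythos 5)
Your argument is correct and takes essentially the same route as the paper, which presents Theorem~\ref{multiplicities} as an immediate consequence of Proposition~\ref{prop:chains_derivatives}; your write-up merely supplies the routine details (the flag equality $X_s=Y_s$, an explicit canonical set of chains built from the analytic eigenvectors, and the identification of partial multiplicities with orders of vanishing). The one imprecision is your parenthetical reason for excluding identically vanishing branches --- compact resolvent alone does not force eigenvalue branches to tend to $\pm\infty$ --- but this is immaterial, since the paper's standing observation that $\la_0$ is isolated on the real line under the stated hypotheses already rules out that degeneracy.
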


The significance of Theorem~\ref{multiplicities} is that the algebraic and geometric multiplicities of isolated real 
characteristic values of selfadjoint pencils can be simply read off from plots of  the real eigenvalue branches $\mu=\mu_j(\la)$ 
of the selfadjoint eigenvalue problem \eqref{Lmu}. Indeed, real characteristic values are simply the locations of 
the intercepts of the branches with $\mu=0$, the geometric multiplicity of a real characteristic value $\la_0$ is simply 
the number of branches crossing at the point $(\la,\mu)=(\la_0,0)$, each branch $\mu=\mu_j(\la)$ corresponds to a single maximal 
chain belonging to a canonical set of chains of 
root vectors, and the length of each such chain is simply the order 
of vanishing of the analytic function $\mu_j(\la)$ at $\la=\la_0$.

\subsection{Krein signature of real characteristic values}
The Krein signature of  a real characteristic value
is usually defined in terms of the restriction of an appropriate indefinite quadratic form to a root space, with the 
signature being positive, negative, or indefinite according to the type of the restricted form.  
Our goal is to give a useful definition of Krein signatures for real characteristic values
of quite general operator pencils, however at the beginning we will restrict ourselves to polynomial pencils with invertible leading 
coefficient, as this makes available the companion matrix method described in \S~\ref{sec:companion}.   In this case the relevant 
quantities can be defined as follows.

\begin{definition}[Krein indices and Krein signature]
Let $\LL$ be a selfadjoint polynomial operator pencil of degree $p$ with invertible leading coefficient $L_p$ acting in 
a Hilbert space $X$, and let $\la_0$ be an isolated real characteristic value of $\LL$.  Given a
root space $\mathcal{U}\subset X$ spanned by the vectors of a chain $\{u^{[0]},\dots,u^{[m-1]}\}$ from a canonical set
for $\la_0$, let $\mathbf{u}^{[j]}\in X^p$, $j=0,\dots,m-1$, denote the columns of the matrix $V$ defined
in the statement of Theorem~\ref{propGLR}, and let $W$ be  the $m\times m$ Gram matrix  
with elements $W_{jk}:=\langle \mathbf{u}^{[j]},\mathbf{u}^{[k]}\rangle$, in terms of the indefinite Hermitian quadratic form 
$\langle\cdot,\cdot\rangle$ given by \eqref{eq:indefinite-form}.  The number of positive (negative) eigenvalues of the 
Hermitian matrix $W$ is called the positive (negative) Krein index of the root space%
\footnote{The Krein indices $\kappa^\pm(\mathcal{U},\la_0)$ are in fact well-defined 
although the construction appears to depend on the specific choice of vectors making up the chain 
that spans the space $\mathcal{U}$.} 
$\mathcal{U}$ at $\la_0$ 
and is denoted $\kappa^+(\mathcal{U},\la_0)$ ($\kappa^-(\mathcal{U},\la_0)$).  
The sums of $\kappa^\pm(\mathcal{U},\la_0)$ over all root spaces $\mathcal{U}$ 
of $\la_0$ are called the positive and negative Krein indices of $\la_0$ and are denoted $\kappa^\pm(\la_0)$.  Finally, 
$\kappa(\mathcal{U},\la_0):=\kappa^+(\mathcal{U},\la_0)-\kappa^-(\mathcal{U},\la_0)$ is called
the Krein signature of the root space $\mathcal{U}$ for $\la_0$, and $\kappa(\la_0):=\kappa^+(\la_0)-\kappa^-(\la_0)$
is called the Krein signature of $\la_0$.  
\label{Kreindef}
\end{definition}

If $\kappa^+(\la_0)>0$ (respectively $\kappa^-(\la_0)>0$) we say that $\la_0$ has a non-trivial positive (respectively negative) 
signature, and if both $\kappa^+(\la_0)$ and $\kappa^-(\la_0)$ are positive we say that $\la_0$ has indefinite Krein signature.  
It is not obvious but true%
\footnote{It is part of the proof of Theorem~\ref{theorem:signature} below.  See \cite{KollarH}.} 
that the Krein signature of a single root space $\kappa(\mathcal{U},\la_0)$ is always either $1$, $-1$, or $0$ 
regardless of the dimension of $\mathcal{U}$.  The ``total'' Krein signature $\kappa(\la_0)$ of a real characteristic 
value $\la_0$ can, however, take any integer value by this definition. If $\la_0$ is a simple real characteristic value with characteristic 
vector $u\in X$, then the only root space $\mathcal{U}$ is spanned by $u$, and the corresponding (only) column of the matrix $V$ is
$\mathbf{u}=(u,\la_0 u,\dots,\la_0^{p-1}u)^\mathsf{T}\in X^p$, so the corresponding Gram matrix $W$ is a real scalar given by 
\begin{equation}
W=\langle \mathbf{u},\mathbf{u}\rangle=   (L_1u+2\la_0L_2u +\cdots +p\la_0^{p-1}L_pu,u)=
(\LL'(\la_0)u,u)
\label{eqW}
\end{equation}
as a simple calculation using \eqref{eq:indefinite-form} shows.  If furthermore $\LL$ is a linear selfadjoint pencil, $\LL(\la)=\la L_1+L_0$, 
then $\LL'(\la_0)=L_1$ and the Krein signature is given by $\kappa(\la_0)=\mathrm{sign}(L_1u,u)$, which coincides 
with the usual definition (see  \eqref{simpleKreinexamples}) found frequently in the literature.

This definition mirrors a recent approach \cite{Kap,KapProm,Prom} to spectral analysis of infinite-dimensional linearized 
Hamiltonian systems in which
the important information about a point of the spectrum is obtained from a finite-dimensional reduction called a \emph{Krein matrix}, 
an analogue of the Gram matrix $W$.  Koll{\'a}r and Pego \cite{KP} developed a rigorous perturbation theory of Krein signatures 
for finite systems of eigenvalues in the spirit of Kato \cite{Kato}, proving results that had previously belonged to 
the mathematical folklore in the field.

\subsection{Graphical Krein signature theory} 
One of the main messages of our paper is that the correct way to generalize the notion 
of Krein indices and signatures so that they apply to and are useful in the analysis of spectral problems involving operator pencils 
of non-polynomial type like that illustrated in Example~\ref{ex:DDE} is to eschew linear algebra in favor of analysis of 
eigenvalue curves $\mu=\mu(\la)$ 
of the problem \eqref{Lmu} in the vicinity of their roots.  This approach is attractive even in cases where 
companion matrix methods  explained 
in \S\ref{sec:companion} suffice to define the relevant quantities.  We therefore begin by formulating such 
a ``graphical'' definition of Krein signatures.
\begin{definition}[Graphical Krein indices and signatures]
Let $\LL(\la)$ be a selfadjoint pencil that is either a holomorphic family of type (A) with compact resolvent or 
of Fredholm form $\II-\BB(\la)$ with $\BB(\la)$ compact and injective, and assume that $\LL$ 
has an isolated real characteristic value $\la_0$. Let $\mu=\mu(\la)$ be one of the real analytic eigenvalue branches 
of the problem \eq{Lmu} with $\mu^{(n)}(\la_0)=0$ for $n=0,1,\dots,m-1$, while $\mu^{(m)}(\la_0)\neq 0$.   
Let $\eta(\mu):=\mathrm{sign}(\mu^{(m)}(\la_0))=\pm 1$. Then the quantities 
\begin{equation}
\kappa^\pm_\mathrm{g}(\mu,\la_0):=\begin{cases}
\tfrac{1}{2}m,&\quad \text{for $m$ even}\\
\tfrac{1}{2}(m\pm\eta(\mu)),&\quad\text{for $m$ odd}
\end{cases}
\end{equation}
are called the positive and negative graphical Krein indices of the eigenvalue branch $\mu=\mu(\la)$ corresponding 
to the characteristic value $\la_0$.  The sums of $\kappa^\pm_\mathrm{g}(\mu,\la_0)$ over all eigenvalue branches 
crossing at $(\la,\mu)=(\la_0,0)$ are called the positive and negative graphical Krein indices of $\la_0$ and are denoted
$\kappa^\pm_\mathrm{g}(\la_0)$. Finally,  $\kappa_\mathrm{g}(\mu,\la_0):=\kappa_\mathrm{g}^+
(\mu,\la_0)-\kappa_\mathrm{g}^-(\mu,\la_0)$ is called the graphical Krein signature of the eigenvalue branch $\mu=\mu(\la)$ 
vanishing at $\la_0$, and $\kappa_\mathrm{g}(\la_0):=\kappa_\mathrm{g}^+(\la_0)-\kappa_\mathrm{g}^-(\la_0)$ is called 
the graphical Krein signature of $\la_0$.
\label{graphsig}
\end{definition}

Note that it follows directly from the definition of the graphical Krein indices $\kappa^\pm_\mathrm{g}(\mu,\la_0)$ 
that the graphical Krein signature 
$\kappa_\mathrm{g}(\mu,\la)$ necessarily  takes one of the three values $1$, $-1$, or $0$.  The concept of \emph{sign characteristics} 
of operator pencils \cite[Chapters 7 and 12]{GLR} is closely related to our definition; here each root space of $\LL$ corresponding 
to a characteristic value $\la_0$ is associated with the sign of $\tilde{\mu}(\la_0) \ne 0$, where 
$\mu(\la) = (\la -\la_0)^{m-1} \tilde{\mu}(\la)$ 
near $\la=\la_0$. See also \cite[Theorem 5.11.2]{GLR}. The relationship between sign characteristics and Krein indices can be expressed 
by an algebraic formula \cite[p.~77, equation (5.2.4)]{GLR}. 

Due to this correspondence, the Krein signature as given by Definition~\ref{Kreindef}
and the graphical Krein signature as given by Definition~\ref{graphsig} are well-known
to be connected in the theory of matrix pencils.
The agreement between the Krein signature and graphical Krein signature for a simple real characteristic value $\la_0$ 
of a selfadjoint pencil $\LL(\la)$ can be deduced easily. In the simple case, the graphical Krein signature $\kappa_\mathrm{g}(\la_0)$ is simply the sign of the derivative $\mu'(\la_0)$ of the unique eigenvalue branch $\mu=\mu(\la)$ vanishing to first order at $\la_0$ according to Theorem~\ref{multiplicities}.  On the other hand, by Definition~\ref{Kreindef} the Krein signature  $\kappa(\la_0)$ is given by the sign of 
$W$ defined by \eq{eqW}, i.e., $\kappa(\la_0)=\sign(\LL'(\la_0)u,u)$. Differentiation of the characteristic value problem \eq{Lmu} 
with respect $\la$ at $\la = \la_0$ and $\mu(\la_0) = 0$ gives 
\begin{equation}
\left( \LL'(\la_0) - \mu'(\la_0)\right) u(\la_0) + \left( \LL(\la_0) - \mu(\la_0)\right) u'(\la_0) = 0.
\label{eq:deriv}
\end{equation}
Taking the scalar product of \eq{eq:deriv} with $u=u(\la_0)$ and using self-adjointness of $\LL(\la_0)$ 
immediately yields the agreement of the signatures since 
$(\LL'(\la_0) u, u) = \mu'(\la_0) (u,u)$.
In the general case the connection is described in detail in \cite[p.~260, Theorem 12.5.2]{GLR} (see also \cite[\S10.5]{GLRmatrix}). 
The long and technical proof in these works for matrix pencils is essentially the same as for operator pencils. 
A different  proof of a more perturbative nature can be found in \cite[Lemma 9]{Kap}. 
See also \cite[Lemma 5.3]{CP} and \cite[Theorem~1]{VP} for the same result in 
the special case a  generalized eigenvalue problem (linear pencil). Yet another  proof based on the Frobenius rule 
can be found in \cite[Theorem~4.2]{KollarH}. 

\begin{theorem}[Graphical nature of Krein indices and signatures]
Let $\LL$ be a self-adjoint polynomial operator pencil of degree $p$ with invertible leading coefficient $L_p$, 
and assume also that $\LL$ is a holomorphic family of type (A) with compact resolvent.  Let $\la_0$ be an isolated real 
characteristic value of $\LL$.
Let $\mathcal{U}=\mathrm{span}\{u^{[0]},u^{[1]},\dots,u^{[m-1]}\}$ be the root space for $\la_0$
corresponding (according to Theorem~\ref{multiplicities}) to the analytic eigenvalue branch $\mu=\mu(\la)$ of the associated eigenvalue
problem \eqref{Lmu}.
Then
\begin{equation}
\kappa^\pm(\mathcal{U},\la_0)=\kappa^\pm_\mathrm{g}(\mu,\la_0),
\end{equation}
from which it follows that
\begin{equation}
\kappa(\mathcal{U},\la_0)=\kappa_\mathrm{g}(\mu,\la_0)
\end{equation}
and summing over all root spaces $\mathcal{U}$ and corresponding eigenvalue branches $\mu$,
\begin{equation}
\kappa^\pm(\la_0)=\kappa_\mathrm{g}^\pm(\la_0)\quad\text{and}\quad
\kappa(\la_0)=\kappa_\mathrm{g}(\la_0).
\end{equation}
\label{theorem:signature}
\end{theorem}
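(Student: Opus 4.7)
The strategy is to compute the Hermitian signature of the Gram matrix $W_{jk}=\langle\mathbf{u}^{[j]},\mathbf{u}^{[k]}\rangle$ directly and match it term by term to Definition~\ref{graphsig}. Since both $\kappa^{\pm}(\la_0)$ and $\kappa^{\pm}_{\mathrm g}(\la_0)$ are additive over root spaces (respectively eigenvalue branches) meeting at $(\la_0,0)$, and Theorem~\ref{multiplicities} gives a bijection between them, it suffices to fix one analytic eigenvalue branch $\mu=\mu(\la)$ of \eqref{Lmu} vanishing to order $m$ at $\la_0$, together with its corresponding orthonormal analytic eigenvector branch $u=u(\la)$, and to verify $\kappa^{\pm}(\mathcal{U},\la_0)=\kappa^{\pm}_{\mathrm g}(\mu,\la_0)$ for the root space $\mathcal{U}$ that Proposition~\ref{prop:chains_derivatives} identifies (taking $w^{[0]}$ to single out that branch and $w^{[s]}=0$ for $s\ge 1$) as the span of the canonical chain $u^{[r]}=u^{(r)}(\la_0)/r!$, $r=0,\dots,m-1$.

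I would first observe that the lifted chain vectors $\mathbf{u}^{[r]}\in X^p$ produced by Theorem~\ref{propGLR} are precisely the Taylor coefficients of the curve
\[
\phi(\la):=\bigl(u(\la),\,\la u(\la),\,\dots,\,\la^{p-1}u(\la)\bigr)^{\mathsf{T}},
\]
i.e.\ $\mathbf{u}^{[r]}=\phi^{(r)}(\la_0)/r!$; this is a short binomial identity matched against the entries of the Jordan-block powers $J_0^{\ell}$. The crux of the argument is the generating-function identity
\[
\langle \phi(\la),\phi(\la')\rangle = \frac{\mu(\la)-\mu(\la')}{\la-\la'}\,(u(\la),u(\la')),\qquad \la,\la'\in\RR,
\]
which I would establish by expanding $(B_{\LL}\phi(\la),\phi(\la'))$ through the Hankel pattern of $B_{\LL}$ in \eqref{eq:indefinite-form}, invoking selfadjointness of the coefficients $L_j$, and using $\LL(\la)u(\la)=\mu(\la)u(\la)$ together with its analogue for $\la'$. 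I expect this step to be the principal technical obstacle: the calculation is entirely elementary, but the bookkeeping has to be arranged so that the collapse of the double sum to a divided difference becomes manifest.

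From this identity, $W_{jk}$ is read off as a Taylor coefficient of an explicit scalar function of $(\la,\la')$ at $(\la_0,\la_0)$. Separately, selfadjointness of $C_{\LL}$ with respect to $\langle\cdot,\cdot\rangle$ combined with the chain relation $(C_{\LL}-\la_0\II)\mathbf{u}^{[r]}=\mathbf{u}^{[r-1]}$ yields the sliding identity $W_{r-1,s}=W_{r,s-1}$, so $W$ is Hankel: $W_{jk}=h_{j+k}$. Writing $\mu(\la)=(\la-\la_0)^m\tilde\mu(\la)$ with $\tilde\mu(\la_0)\ne 0$ and expanding
\[
\frac{\mu(\la)-\mu(\la')}{\la-\la'}=\tilde\mu(\la_0)\sum_{l=0}^{m-1}(\la-\la_0)^{m-1-l}(\la'-\la_0)^{l}+O\bigl((|\la-\la_0|+|\la'-\la_0|)^m\bigr),
\]
and using $(u(\la_0),u(\la_0))=1$, I deduce $h_r=0$ for $r\le m-2$ and $h_{m-1}=\mu^{(m)}(\la_0)/m!\ne 0$.

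Finally, I would compute the signature of this Hankel structure. Since $\det W=(-1)^{\lfloor m/2\rfloor}h_{m-1}^m\ne 0$, a value independent of the free entries $h_m,\dots,h_{2m-2}$, the inertia of $W$ is invariant under continuous deformation of those entries to zero, leaving $W=h_{m-1}J_{\mathrm{rev}}$ where $J_{\mathrm{rev}}$ is the reversal permutation matrix. The $\pm 1$ eigenspaces of $J_{\mathrm{rev}}$ have dimensions $\lceil m/2\rceil$ and $\lfloor m/2\rfloor$, so the positive inertia of $W$ equals $m/2$ when $m$ is even and $(m+\eta(\mu))/2$ when $m$ is odd, with $\eta(\mu)=\sign(h_{m-1})=\sign(\mu^{(m)}(\la_0))$; the analogous formula holds for negative inertia. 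This reproduces $\kappa^{\pm}_{\mathrm g}(\mu,\la_0)$ from Definition~\ref{graphsig} exactly, so $\kappa^{\pm}(\mathcal{U},\la_0)=\kappa^{\pm}_{\mathrm g}(\mu,\la_0)$, and summing over all root spaces at $\la_0$ delivers the remaining equalities in the theorem.
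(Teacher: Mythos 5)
Your argument is correct, and it takes a genuinely different route from the paper, which proves the identity only for \emph{simple} characteristic values (differentiate $\LL(\la)u(\la)=\mu(\la)u(\la)$ once and pair with $u(\la_0)$) and defers the general case to the literature (Gohberg--Lancaster--Rodman's sign-characteristic theory, Kapitula's Krein-matrix lemma, and the Frobenius-rule proof in \cite{KollarH}), describing those proofs as long and technical. Your proof is self-contained and rests on three observations the paper never assembles: (i) the lift of the canonical chain $u^{[r]}=u^{(r)}(\la_0)/r!$ supplied by Proposition~\ref{prop:chains_derivatives} and Theorem~\ref{propGLR} is the Taylor jet of $\phi(\la)=(u(\la),\la u(\la),\dots,\la^{p-1}u(\la))^{\mathsf{T}}$ (I checked the binomial bookkeeping against the entries of $J_0^{\ell}$); (ii) the divided-difference identity $\langle\phi(\la),\phi(\la')\rangle=\frac{\mu(\la)-\mu(\la')}{\la-\la'}(u(\la),u(\la'))$, which does hold: the Hankel pattern of $B_\LL$ collapses the double sum to $\sum_{n=1}^{p}\frac{\la^n-(\la')^n}{\la-\la'}(L_nu(\la),u(\la'))$, the $L_0$ terms cancel, and the two characteristic equations finish it; and (iii) the inertia of an invertible Hermitian anti-triangular Hankel matrix is read off from its anti-diagonal entry, since $\det W=(-1)^{\lfloor m/2\rfloor}h_{m-1}^{m}$ is blind to $h_m,\dots,h_{2m-2}$, so the homotopy to $h_{m-1}J_{\mathrm{rev}}$ preserves inertia and the $\pm1$ eigenspaces of the reversal have dimensions $\lceil m/2\rceil$ and $\lfloor m/2\rfloor$. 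The sliding identity $W_{r-1,s}=W_{r,s-1}$ from $B_\LL C_\LL=C_\LL^{*}B_\LL$ is exactly what makes $W$ Hankel, and the expansion of the divided difference against $(u(\la),u(\la'))=1+O(|\la-\la_0|+|\la'-\la_0|)$ correctly yields $h_r=0$ for $r\le m-2$ and $h_{m-1}=\mu^{(m)}(\la_0)/m!$, whose sign is $\eta(\mu)$; the resulting inertia reproduces Definition~\ref{graphsig} in both parities of $m$. What your approach buys is a short, purely computational proof at the level of a single eigenvalue branch, valid verbatim for operator pencils since every object is an $m\times m$ matrix, and it incidentally makes the well-definedness claim in the footnote to Definition~\ref{Kreindef} transparent; the cited proofs obtain the same result either through the local Smith form or through perturbation of the Krein matrix, at considerably greater length.
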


In particular, this result implies that for simple characteristic values $\la_0$, the Krein signature can be calculated easily 
from the formula \eqref{simpleKrein2examples}. This result allows us to extend the notion of Krein signature in a very 
natural way to problems arising in the theory of stability of nonlinear waves that cannot easily 
be formulated as spectral problems for polynomial pencils as the spectral parameter $\la$ enters into the problem in 
two different, transcendentally related, ways (as in Example~\ref{ex:DDE}).  
Finally, the graphical Krein signature is the one that is most easily detected by a natural generalization
of the Evans function, the subject that we take up next in \S\ref{s:Evans}.  From now on, when we refer to total 
Krein indices and Krein signatures of a real characteristic value $\la_0$, we will always mean the 
(more widely applicable) graphical quantities of Definition~\ref{graphsig}, and omit the unnecessary subscript ``g''.

\section{Evans Functions\label{s:Evans}}
Consider a dynamical system linearized about an equilibrium in the form (compare to \eq{dydtexamples})
\begin{equation}
\frac{du}{dt} = Au.
\label{Ax}
\end{equation}
In the infinite-dimensional context typical in nonlinear wave theory, the linearized system
\eq{Ax} is usually a partial differential equation and $u$ is a vector in some Hilbert space $X$.  
For waves in $1+1$ dimensions, $X$ is a space of functions of a real spatial variable $x$, and $A$ can be thought of as a linear differential
operator acting in $X$. The key spectral problem in the stability analysis of the equilibrium is (compare to \eq{JL})
\begin{equation}
-i\la u_0= A u_0,
\label{Au}
\end{equation}
as each eigenvector $u_0=u_0(\cdot) \in X$ of \eq{Au} generates a solution of \eq{Ax} in the separated form 
$u=u(x,t)=e^{-i\la t} u_0(x)$. 
Values of $\nu=-i\la\in\sigma(A)$ having a positive real part imply the existence of exponentially growing modes in \eq{Ax}  
and thus linear instability of the equilibrium of the original (nonlinear) dynamical system. 

Although numerical methods to study $\sigma(A)$ based on spatial discretization or some other finite dimensional truncation 
are easy to implement, they may fail to detect the full extent of the unstable spectrum or they may introduce spurious $\la$, 
particularly in the vicinity of any continuous spectrum of $A$. In \cite{Evans1}--\cite{Evans4} a more robust and reliable 
numerical method was developed based on a new concept now called the \emph{Evans function}, and this method was 
successfully applied to study the stability of neural impulses.  Since its introduction, the Evans function has become a popular 
tool for the detection of stability of various types of waves in many
applications including fluid mechanics, condensed matter physics, combustion theory, etc. 

\subsection{Typical construction and use of Evans functions}
\label{sec:typical-construction}
We begin with a standard definition.
\begin{definition}[Evans functions]
\label{def:Evans}
Let $\LL=\LL(\la)$ be an operator pencil.
An analytic function $D:\Omega\subset\mathbb{C}\to\mathbb{C}$ whose roots in $\Omega$ coincide exactly
with isolated characteristic values $\la=\la_0$ of the spectral problem \eq{Lu},
and that vanishes at each such point to precisely the order of the algebraic multiplicity of the characteristic value 
is called an Evans function for $\LL$ (on $\Omega$).
\end{definition}

It is common to refer to ``the'' Evans function for a given spectral problem, and this
usually implies a particular kind of construction appropriate for problems of the special form \eq{Au} that we will describe briefly below. 
However the key properties of analyticity and vanishing on the discrete spectrum with the correct multiplicity are shared by many 
other functions and as the freedom to choose among them can be useful in applications, we prefer to  keep the terminology as 
general as possible.

Let us now describe the classical construction of ``the'' Evans function for \eq{Au} in the case that
$A$ is a scalar ordinary differential operator of order $k$ acting in $X$.  To begin with, the spectral problem
\eq{Au} is rewritten as a (nonautonomous, in the usual case that $A$ has non-constant coefficients) first-order system%
\footnote{There are of course many ways to rewrite a single higher-order linear differential equation as a first-order system, 
and if care is not taken key symmetries of the original equation can be lost in the process.  This is true even in the 
case of \eq{JL}, although for such problems with Hamiltonian symmetry Bridges and Derks \cite{Bridges} have shown  
how some of this structure can be retained.}
\begin{equation}
dv / dx = B(x,\la) v,\qquad v \in X^{k}\, ,
\label{Bv}
\end{equation} 
where $B(x,\la)$ is a $k\times k$ matrix-valued function assumed to take finite limits $B_\pm(\la)$ as $x\to\pm\infty$.  
We consider \eqref{Bv} along with the constant-coefficient ``asymptotic systems''
\begin{equation}
dv / dx=B_\pm(\la)v.
\label{eq:asymp-systems}
\end{equation}
The first-order system \eq{Bv} has a 
nonzero solution $v(x)$ decaying as $|x| \rightarrow \infty$, and hence \eq{Au} has a nontrivial solution $u\in X$, 
exactly for those values of $\la$ for which the forward evolution $W_\mathrm{u}^{-\infty}(x) $  
of the unstable manifold of  the zero equilibrium of  \eqref{eq:asymp-systems} for $B_-(\la)$ has a 
nontrivial intersection
with the backward evolution 
$W_\mathrm{s}^{+\infty}(x)$ of the stable manifold of the zero equilibrium of  \eqref{eq:asymp-systems} for $B_+(\la)$. 
To properly define the evolutes $W_\mathrm{u}^{-\infty}(x)$ and $W_\mathrm{s}^{+\infty}(x)$ for a common 
value of $x\in\mathbb{R}$ requires that both asymptotic systems \eqref{eq:asymp-systems} are hyperbolic with 
``exponential dichotomy'' (that is, the eigenvalues of $B_\pm(\la)$ are bounded away from the imaginary axis as $\la$
varies in the region of interest), and for the existence of isolated characteristic values $\la$ one usually requires 
complementarity  (in $\mathbb{C}^k$) of the dimensions of the stable and the unstable manifolds at $x = \pm \infty$.
The traditional Evans function $D(\la)$ detecting transversality of the intersection of the evolutes can then be
expressed as a $k\times k$ Wronskian determinant whose columns include $p_-<k$ vectors spanning $W_\mathrm{u}^{-\infty}(x)$ 
and $p_+:=k-p_-$ vectors spanning $W_\mathrm{s}^{+\infty}(x)$. 
For coefficient matrices $B(x,\la)$ of trace zero, Abel's Theorem implies that $D(\la)$ is independent of the value of $x\in\mathbb{R}$ 
at which the Wronskian is computed.  For systems with nonzero but smooth trace, \emph{any} value of $x\in\mathbb{R}$ can 
be chosen but different values of $x$ lead to different Evans functions $D(\la)$ the ratio of any two of which is 
an analytic non-vanishing function of $\la$.  
Usually in such situations the unimportant dependence of $D(\la)$ on $x\in\mathbb{R}$ is explicitly removed by an appropriate 
exponential normalization \cite{PegoWarchall}. This type of construction has been given a topological interpretation
by Alexander {\it et al.}~\cite{AGJ}, who related $D(\la)$ to the Chern number of a tangential fiber bundle. 
Pego and Weinstein \cite{PegoWeinstein} used this definition of $D(\la)$ to establish stability properties of solitary waves 
in nonlinear systems including the Korteweg-deVries, Benjamin-Bona-Mahoney, and Boussinesq equations
and also pointed out a connection between $D(\la)$ and the \emph{transmission coefficient} associated with \emph{Jost solutions} 
in scattering theory (see also \cite{Yanagida}). 
This type of construction has also been extended to problems in multiple dimensions where $A$ is a partial differential operator; 
in these cases special steps must be taken to ensure the analyticity of the resulting Evans function.  In low dimensions 
a useful generalization of the Wronskian can be constructed using exterior products \cite{PegoWarchall}, and a robust numerical algorithm
has also been developed \cite{ZumbrunNum} for this purpose that uses continuous orthogonalization to obtain an analytic Evans function.

The main reason for insisting upon the key property of analyticity of an Evans function is that the presence of 
characteristic values, as zeros of the analytic function $D(\la)$, can be detected \emph{from a distance} in the 
complex plane by means of the Argument Principle.  Indeed, to determine the number of characteristic values 
(counting algebraic multiplicity) in a two-dimensional simply-connected bounded domain $\Omega$ of the complex plane, 
it suffices to evaluate an Evans function only on the one-dimensional boundary $\partial\Omega$ and to compute the 
winding number of the phase of $D(\la)$ as $\la$ traverses this curve.  If the winding number is zero, one knows 
that there are no characteristic values in $\Omega$.  Otherwise, $\Omega$ can be split into subregions and the process 
repeated to further confine the discrete spectrum.  From the computational point of view this is both more robust and 
far less expensive than a fully two-dimensional search of the $\la$-plane for characteristic values.
Searching for characteristic values by computing the winding number of an Evans function $D(\la)$ along closed curves 
is a particularly useful approach to the spectral problem \eqref{Lu} if one knows, say by a variational argument, a bound 
of the form $|\la|<M$ for the characteristic values of the problem.  
However, even in cases when such a bound is unavailable, it turns out that if the Krein signature 
of certain characteristic values (usually the real ones) is known or can be computed it is sometimes possible to justify numerics 
and to reduce computational costs significantly \cite{KP}. This suggests that it would be particularly useful if it were possible to extract 
the Krein signature of a real characteristic value directly from an Evans function.  

\subsection{Evans functions and Krein signatures}
By definition, an Evans function $D(\la)$ detects characteristic values of \eq{Lu} but 
the usual constructions produce Evans functions that do not provide or contain any information 
about the corresponding root vectors. To better understand the problem, first consider a simple finite-dimensional case of 
the linearized Hamiltonian spectral problem \eq{JL} and the related spectral problem \eq{LKeq} 
for an equivalent linear pencil $\LL(\la)=L-\la K$.  In this case the most obvious definition of an Evans function is to set
\begin{equation}
D(\la) := \det(\LL(\la))=\det(L-\lambda K).
\label{Edef}
\end{equation}
Note that as $L$ and $K$ are Hermitian matrices, $D$ is real-valued as a function of $\lambda\in\mathbb{R}$ but of course
it has meaning for complex $\lambda$ as well.
It might seem natural to expect that the Evans function also encodes the Krein signature of 
the characteristic values in some sense.  It has frequently been thought that under 
some circumstances at least the sign of the slope $D'(\lambda_0)$ may be correlated 
with the Krein signature $\kappa(\la_0)$.  To explore this idea,
we present the following two examples illustrating the graphical 
interpretation of the Krein signature as explained in Definition~\ref{graphsig} and Theorem~\ref{theorem:signature},
and comparing with the slope of $D(\la)$ at its roots.

\begin{example}\label{ex:KreinEvansMatch}
Let $L$ and $J$ be given by  (compare to Example~\ref{ex:indefinite})
$$
L=\begin{pmatrix}2 & 0 & 0 & 0\\0 & -1 & 0 & 0\\0 & 0 & 1 & 0\\0 & 0 &0 & -2
\end{pmatrix},\quad
J=\begin{pmatrix}0&0&2&0\\0&0&0&1\\-2&0&0&0\\0&-1&0&0\end{pmatrix}.
$$
The four real eigenvalue branches $\mu=\mu(\la)$ of the associated spectral problem \eqref{Lmu} for this example 
are plotted along with the graph of $D(\la)$ as given by \eqref{Edef} in the left-hand panel of Fig.~\ref{fig2}.   
\begin{figure}[htp]
\centering
\includegraphics{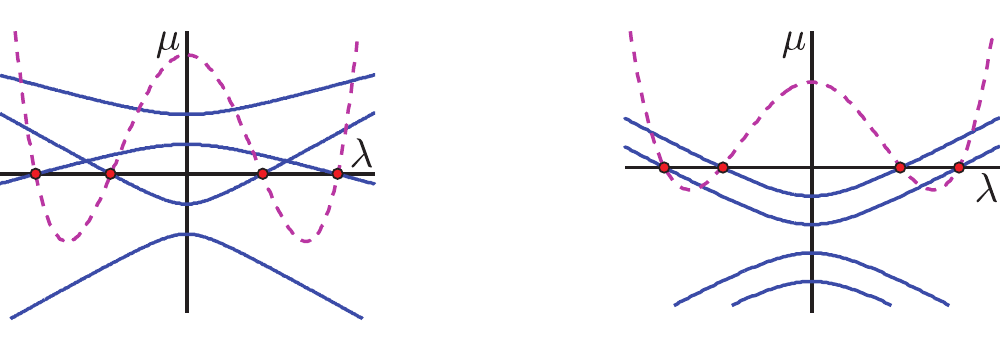}
\caption[]{%
The four eigenvalue branches $\mu=\mu(\la)$ of the linear pencil $\LL(\la)$ for
Examples~\ref{ex:KreinEvansMatch}--\ref{ex:KreinEvansMisMatch} 
are plotted with solid curves. The real intercepts of the branches  with $\mu = 0$ 
corresponding to real characteristic values of $\LL= \LL(\la) = L - \la K$, $K := (iJ)^{-1}$, are indicated with dots.
The dashed curve is the graph of the Evans function (arbitrary units) $D(\la) = \det(\LL(\la))$.}
\label{fig2}
\end{figure}
The Krein signature of each of the real characteristic values $\la=\la_0$ can be easily calculated graphically
using Definition~\ref{graphsig} and Theorem~\ref{theorem:signature}, or equivalently as all four characteristic 
values are simple, via \eqref{simpleKrein2examples}. 
In this example it is easy to see that for each of the four real characteristic values, $\kappa(\la_0)=-\mathrm{sign}(D'(\la_0))$, 
and therefore the sign of the slope $D'(\la)$ of $D(\la)$  at its zeros is strongly correlated with the Krein signature.
\end{example}

\begin{example}\label{ex:KreinEvansMisMatch}
Let $L$ and $J$ be given by (compare to Example~\ref{ex:posdef}) 
$$
L=\begin{pmatrix}-0.5 & 0 & 0 & 0\\0 & -1 & 0 & 0\\0 & 0 & -1.5 & 0\\0 & 0 & 0 & -2\end{pmatrix}, \quad
J=\begin{pmatrix}0 & 0 & 1 & 0\\0 & 0 & 0 & 1\\-1 & 0 & 0 & 0\\0 & -1 & 0 & 0\end{pmatrix}.
$$
This case is illustrated in the right-hand panel of Fig.~\ref{fig2}.
There are again four simple real characteristic values of $\LL$ 
(here consistent with the fact that $L$ is negative definite), and by using the graphical 
definition \eqref{simpleKrein2examples} we see that the positive (negative) characteristic values each have 
Krein signature $\kappa(\la_0)=+1$ ($\kappa(\la_0)=-1$).  By contrast with Example~\ref{ex:KreinEvansMatch}, 
in this case it is obvious that the slope of $D(\la)$ 
at its roots is positively correlated with the Krein signature of half of the characteristic values and 
is negatively correlated with the Krein signature of the other half.  Hence $D'(\la)$ does not detect the Krein 
signature correctly as the Evans function fails to capture the attachment of the characteristic values
to different eigenvalue branches $\mu=\mu(\la)$. 
\end{example}

These simple examples illustrate the state of understanding in much of the literature: it just is not 
clear under which circumstances one may expect derivatives of an Evans function to encode Krein signatures of characteristic values.

\subsection{Evans-Krein functions}
In the general setting of operator pencils we now address
the question posed in \cite{KKS} (in the context of linearized Hamiltonians) as to
whether it is possible to determine Krein signatures from an Evans function.  In fact we 
can answer this question in the affirmative by showing how a simple generalization of the notion of an Evans function 
correctly detects the Krein signature in every case.
Our approach provides a simple way to calculate  Krein signatures of real characteristic values 
of selfadjoint operator pencils by an elementary augmentation of virtually any existing 
Evans function evaluation code at almost no additional programming cost. 
The main idea is to build into an Evans function information about the individual eigenvalue branches $\mu=\mu(\la)$ 
to which the real characteristic values are associated as roots.  In fact, this simply amounts to bringing in $\mu$ 
as an additional parameter.  We now introduce such an appropriate generalization of an Evans function, which we 
call an \emph{Evans-Krein function}.

\begin{definition}[Evans-Krein functions]
\label{def:Evans-Krein}
Let $\LL=\LL(\la)$ be an operator pencil acting on a space $X$, and consider the related pencil 
$\mathcal{K}=\mathcal{K}(\la;\mu):=\LL(\la)-\mu \II$,
where $\mu$ is a sufficiently small real parameter.  An Evans function $E(\la;\mu)$ (in the sense of Definition~\ref{def:Evans}) for 
the $\mu$-parametrized pencil $\mathcal{K}$ is called an Evans-Krein function for $\LL$.
\end{definition}

Under suitable general conditions on $\LL$, an Evans-Krein function is not only analytic for $\la\in\Omega\subset\mathbb{C}$ 
for each small $\mu$, but also will be analytic in both $\la$ and $\mu$ on a set of the 
form $\Omega\times\{|\mu|<\delta\}\subset\mathbb{C}^2$ 
for some $\delta>0$. Upon setting $\mu=0$ we see that if $E(\la;\mu)$ is an Evans-Krein function for $\LL$, then $D(\la):=E(\la;0)$ is 
an Evans function for $\LL$, with all of the coincident properties as given in Definition~\ref{def:Evans}.  
It is also clear that if $\mu=\mu(\la)$ is a real eigenvalue branch of the problem \eqref{Lmu}, then $E(\la;\mu(\la))$ 
vanishes identically for all $\la\in\mathbb{C}$ for which the branch 
is defined and $\mu(\la)$ is sufficiently small.  Moreover, if $\la_0\in\mathbb{R}$ is a real characteristic value of $\LL$ 
of geometric multiplicity $k\ge 1$, and if $\mu_1(\la),\dots,\mu_k(\la)$ are the corresponding eigenvalue branches 
of \eqref{Lmu} crossing at the point $(\la,\mu)=(\la_0,0)$, 
then in a $\mathbb{C}^2$ neighborhood of this point, an arbitrary Evans-Krein function can be locally represented in the normal form
\begin{equation}
E(\la;\mu)=C(\la;\mu)\prod_{j=1}^k(\mu-\mu_j(\la))
\label{eq:EK-normal}
\end{equation}
where $C(\la;\mu)$ is an analytic function with $C(\la_0;0)\neq 0$.

Construction of an Evans-Krein function in practice can be very similar to that of an Evans function in the usual sense.  
For a matrix pencil $\LL$  one can set
\begin{equation}
E(\la;\mu):=\det(\LL(\la)-\mu\mathbb{I}),\quad\text{and}\quad D(\la):=E(\la;0).
\label{DE}
\end{equation}
This definition is also suitable if $\LL$ is an operator pencil that is a trace-class perturbation of the identity, in which 
case the determinants above are to be understood as Fredholm determinants.
In the case that the spectral problem \eqref{Lu} involves differential operators and one
is led to consider the traditional construction of an Evans function (as described in \S\ref{sec:typical-construction}) via a Wronskian of 
subspaces of solutions decaying at the opposite spatial infinities, \emph{exactly the same construction gives an 
Evans-Krein function if the pencil $\LL$ is merely perturbed into $\mathcal{K}$}, because for sufficiently small $\mu$ 
it is easy to guarantee the persistence of hyperbolicity of the asymptotic systems \eqref{eq:asymp-systems} 
as well as the complementarity of the stable and unstable subspace dimensions.

\subsection{Calculation of Krein signatures from an Evans-Krein function}
Let $E(\la;\mu)$ be an Evans-Krein function for a selfadjoint operator pencil $\LL$ for which eigenvalue 
branches $\mu=\mu(\la)$ of the associated problem \eqref{Lmu} are sufficiently smooth, and let $\la=\la_0$ 
be an isolated real characteristic value of $\LL$.  By differentiating the identity $E(\la;\mu(\la))=0$ 
with respect to $\la$ along a particular smooth branch $\mu=\mu(\la)$ for which $\mu(\la_0)=0$ 
and then evaluating at $\la=\la_0$ we easily obtain
\begin{equation}
E_\la(\la_0;0)+E_\mu(\la_0;0)\mu'(\la_0)=0.
\label{ElaEq}
\end{equation}
According to the normal form \eqref{eq:EK-normal}, the partial derivatives $E_\la(\la_0;0)$ and  $E_\mu(\la_0;0)$ 
are both nonzero under the assumption that the characteristic value $\la_0$ is simple, which implies both that the 
number of eigenvalue branches $\mu=\mu(\la)$ of \eqref{Lmu} crossing the $\mu=0$ axis at $\la=\la_0$ is exactly $k=1$,
and also that $\la_0$ is a simple root of the branch function $\mu(\la)$ (see \S\ref{s:Krein} for details). 
Therefore, in this case (corresponding to a simple real characteristic value $\la_0$) we have
\begin{equation}
\mu'(\la_0) = -\frac{E_{\la}(\la_0;0)}{E_{\mu}(\la_0;0)},
\label{muprime}
\end{equation}
and then comparing with the graphical formula \eqref{simpleKrein2examples} for the Krein signature of such a characteristic 
value we learn that
\begin{equation}
\kappa(\la_0)=\mathrm{sign}(\mu'(\la_0))=-\mathrm{sign}\left[\frac{E_\la(\la_0;0)}{E_\mu(\la_0;0)}\right].
\label{kreinkappa}
\end{equation}
Recalling that $E(\la;0)$ is necessarily an Evans function $D(\la)$ for $\LL$, this formula can be re-written in the equivalent form
\begin{equation}
\kappa(\la_0)=-\mathrm{sign}\left[\frac{D'(\la_0)}{E_\mu(\la_0;0)}\right].
\label{simpleKappa}
\end{equation}
This formula%
\footnote{The specific Evans function $D(\la)$ in the numerator  of course depends on 
the Evans-Krein function $E(\la;\mu)$ from which it arises upon setting $\mu=0$.  This has some immediate 
implications; for example if one considers a standard eigenvalue problem $Lu=\la u$ for a selfadjoint operator $L$, 
upon introducing the selfadjoint pencil $\LL(\la):=L-\la \II$ 
and the corresponding $\mu$-perturbation $\mathcal{K}(\la;\mu):=L-\la \II-\mu \II= L-(\la+\mu)\II$ one sees that
$E_\la(\la_0;0)=E_\mu(\la_0;0)$ for all real characteristic values (eigenvalues of $L$) $\la_0$.  Hence the 
Krein signature of each simple characteristic value is necessarily positive, even though by Rolle's 
Theorem $D'(\la)=E_\la(\la;0)$ will have sign changes.} 
shows how the Krein signature of a simple real characteristic value $\la_0$
indeed relates to the derivative of an Evans function; the reason that the relationship seems ambiguous as shown 
in Examples~\ref{ex:KreinEvansMatch}--\ref{ex:KreinEvansMisMatch} is that to obtain agreement one must take 
into account an additional factor, namely the sign of $E_\mu(\la_0;0)$ which can be different for different characteristic values $\la_0$.
The fact that it is only the \emph{sign} of $E_\mu(\la_0;0)$ that enters means that there is substantial freedom to generalize 
the way that an Evans-Krein function can depend on the parameter $\mu$ without changing the Krein signature formula 
\eq{simpleKappa}.  Such generalizations will be discussed in \S\ref{s:gEK} below.

An Evans-Krein function $E(\la;\mu)$ can also be used to provide information about the positive and negative Krein indices of 
the eigenvalue branch(es) passing through $\mu=0$ at an isolated non-simple real characteristic value $\la_0$ of a selfadjoint 
operator pencil.  According to the theory presented in \S\ref{s:Krein} (see Definition~\ref{graphsig}), 
the (graphical) Krein indices $\kappa^\pm_\mathrm{g}(\mu,\la_0)$ associated with the eigenvalue branch $\mu=\mu(\la)$ 
are known in the case of odd algebraic multiplicity (order of vanishing of $\mu(\la)$ at $\la_0$) 
$m$ once the sign of $\mu^{(m)}(\la_0)\neq 0$ is known (if $m$ is even there is nothing to calculate).  But
$\mu^{(m)}(\la_0)$ can also be expressed in terms of nonzero partial derivatives of an Evans-Krein function $E(\la;\mu)$ 
at the point $(\la,\mu)=(\la_0,0)$.  The main idea is to calculate sufficiently many derivatives of the 
identity $E(\la;\mu(\la))=0$ at the point $\la=\la_0$.  
By repeated application of the chain rule, one finds that
\begin{multline}
\frac{d^r}{d\la^r}E(\la;\mu(\la))=\sum_{n=0}^r\binom{r}{n}\frac{\partial^rE}{\partial\mu^n\partial\la^{r-n}}(\la;\mu(\la))\mu'(\la)^n \\
{}+ 
\sum_{p=1}^{r-1}\;\;\;\;\;\sum_{n+s=p}\;\;\mathop{\sum_{d_1\le\cdots \le d_n}}_{d_1+\cdots+d_n=p}\hspace{-0.2in}
\beta_{r,n,s}(d_1,\dots,d_n)\frac{\partial^{r-s}E}{\partial\mu^n\partial\la^{r-p}}(\la;\mu(\la))\prod_{j=1}^n\mu^{(d_j)}(\la),
\label{eq:chain-rule}
\end{multline}
where $n$, $s$, and $d_j$ for $j=1,\dots,n$ are positive integers and $\beta_{r,n,s}(d_1,\dots,d_n)$ 
are certain complicated coefficients.  
Many of the partial derivatives of $E$ can be evaluated at $\la=\la_0$ and $\mu(\la_0)=0$ by using the normal form \eqref{eq:EK-normal}.  
While one can develop a general theory, to keep things simple we just present two representative examples.

\begin{example}[Real characteristic values of geometric multiplicity $k=1$.]
For a real characteristic value $\la_0$ of $\LL$ having geometric multiplicity $k=1$ but arbitrary finite algebraic 
multiplicity $m\ge 1$, there is a unique eigenvalue branch $\mu=\mu_1(\la)$
(analytic under suitable assumptions) of the problem \eqref{Lmu} passing through the point $(\la,\mu)=(\la_0,0)$, 
and we have $\mu_1(\la_0)=\mu_1'(\la_0)=\cdots=\mu_1^{(m-1)}(\la_0)=0$
but $\mu_1^{(m)}(\la_0)\neq 0$.  If $m$ is even, by definition we have $\kappa_\mathrm{g}^\pm(\mu_1,\la_0)=m/2$, 
while for $m$ odd we need to calculate $\mu_1^{(m)}(\la_0)$ to determine
the Krein indices.  But this is easily obtained from an Evans-Krein function $E(\la;\mu)$ by calculating $m$ 
derivatives of the identity $E(\la;\mu_1(\la))=0$ using \eqref{eq:chain-rule} and evaluating at $\la=\la_0$.
Only two terms survive for $\la=\la_0$ due to the fact that $\mu_1(\la)$ vanishes there to order $m$.  Indeed,
\begin{equation}
\left.\frac{d^m}{d\la^m}E(\la;\mu_1(\la))\right|_{\la=\la_0}=\frac{\partial^mE}{\partial\la^m}(\la_0;0) +
\frac{\partial E}{\partial \mu}(\la_0;0)\mu_1^{(m)}(\la_0)=0
\end{equation}
and since $E_\mu(\la_0;0)\neq 0$ as follows from the fact that $k=1$, we obtain
\begin{equation}
\mu_1^{(m)}(\la_0)=-\frac{1}{E_\mu(\la_0;0)}\frac{\partial^mE}{\partial\la^m}(\la_0;0).
\end{equation}
Note that $\partial^mE/\partial\la^m(\la_0;0)\neq 0$ because this is the $m$-th derivative of an Evans function $D(\la)$ 
at a characteristic value of algebraic multiplicity $m$.
\end{example}

\begin{example}[Semisimple real characteristic values.]
If $\la_0$ is a semisimple real characteristic value of a selfadjoint operator pencil $\LL$ of geometric and algebraic multiplicity $k\ge 1$, 
then there are $k$ (analytic under suitable assumptions) eigenvalue branches $\mu=\mu_j(\la)$, $j=1,\dots,k$, 
of the problem \eqref{Lmu} each of which has a simple root at $\la=\la_0$.  To determine the Krein indices it 
suffices to express $\mu_j'(\la_0)$ in terms of an Evans-Krein function $E(\la;\mu)$ for $j=1,\dots,k$.  
Now, from the normal form \eqref{eq:EK-normal} one can easily see that in this case $E(\la_0;\mu)=C(\la_0;\mu)\mu^k$ 
with $C(\la_0;0)\neq 0$ so in particular $E_\mu(\la_0;0)=0$ unless $k=1$ (and then $\la_0$ is simple), 
implying that in general the  method of differentiation once along each branch individually
and evaluating at $\la=\la_0$ as in the calculation leading to \eqref{muprime} will fail.

However, using the normal form \eqref{eq:EK-normal} and the first-order vanishing of all $k$ of the branches 
crossing at $(\la_0,0)$ one can easily show that
\begin{equation}
\frac{\partial^{n+j}E}{\partial\mu^n\partial\la^j}(\la_0;0)=0,\quad n+j<k.
\label{eq:partial-derivatives-not-enough}
\end{equation}
If we choose a branch $\mu=\mu_j(\la)$ and differentiate the identity $E(\la;\mu_j(\la))=0$
exactly $k$ times with respect to $\la$ at $\la=\la_0$, then with $r=k$ in \eqref{eq:chain-rule}
the identities \eqref{eq:partial-derivatives-not-enough} guarantee that all of the terms on the second line vanish identically.  
Therefore the $k$ values $z=\mu_j'(\la_0)$
are determined as the roots of the $k$-th order polynomial equation
\begin{equation}
\sum_{n=0}^k\binom{k}{n}\frac{\partial^kE}{\partial\mu^n\partial\la^{k-n}}(\la_0;0)z^n=0.
\label{eq:polynomial-equation}
\end{equation}
This equation is genuinely $k$-th order because the normal form \eqref{eq:EK-normal} and our assumptions on $\mu_j(\la)$ imply that
\begin{equation}
\frac{\partial^kE}{\partial\mu^n\partial\la^{k-n}}(\la_0;0)=n!C(\la_0;0)\frac{d^{k-n}\gamma_n}{d\la^{k-n}}(\la_0).
\end{equation}
Here, $\gamma_p(\la)$ is defined as the coefficient of $\mu^p$ in the product
\begin{equation}
\prod_{j=1}^k(\mu-\mu_j(\la))=\sum_{p=0}^k\gamma_p(\la)\mu^p.
\end{equation}
Since $\gamma_k(\la)\equiv 1$, the coefficient of $z^k$ in \eqref{eq:polynomial-equation} is $k!C(\la_0;0)\neq 0$.  
Likewise, since $\gamma_0(\la)=(-1)^k\mu_1(\la)\cdots\mu_k(\la)$, the constant term in the polynomial on 
the left-hand side of \eqref{eq:polynomial-equation} is $(-1)^kC(\la_0;0)\mu_1'(\la_0)\cdots\mu_k'(\la_0)\neq 0$, 
which implies that all of the roots of \eqref{eq:polynomial-equation} are nonzero.  That all $k$ of the roots are real numbers 
is less obvious but necessarily true.
\end{example}

In summary, as was observed in  \cite{Kap, KapProm}, an Evans function $D(\la)$ alone cannot explain the origin of 
Krein indices and signatures of characteristic values. The geometrical reason is simple: $D(\la)$ is proportional to the 
\emph{product of all eigenvalue branches} $\mu(\la)$ of the problem \eqref{Lmu}, and no amount of differentiation 
with respect to $\la$ will separate the individual branches.  
However, the branches can indeed be separated with the help of derivatives of an Evans-Krein function 
$E(\la; \mu)$ with respect to $\mu$ at $(\la,\mu)=(\la_0,0)$ that encode the local behavior of each branch 
close to the characteristic value. 
A conceptually related approach was also used in studies of stability of $N$-pulse solutions \cite{AGJS, MKSJ, Sandstede1998}, where 
rather than analyzing an Evans function as a product of eigenvalue branches the authors study a matrix whose determinant is the
Evans function.  This introduces a variety of different functions (e.g. the matrix elements) that can contain 
extra information compared to a single complex-valued Evans function.

\subsection{Further generalization of Evans-Krein functions}\label{s:gEK}
For some purposes (see Example~\ref{ex:BEC} below) it may be important to determine whether two distinct 
real characteristic values of a selfadjoint operator pencil $\LL$ belong to (are obtained as roots of) the ``same'' 
eigenvalue branch $\mu=\mu(\la)$ of the associated eigenvalue problem \eqref{Lmu}.  Determining whether 
two real characteristic values are connected in this way requires analyzing the eigenvalue branches of the operator $\LL(\la)$ 
away from $\mu=0$.  When we described a typical construction of an Evans-Krein function in problems 
of stability analysis for nonlinear waves, we pointed out that when $\mu$ is sufficiently small the hyperbolicity of the
asymptotic systems \eqref{eq:asymp-systems} is guaranteed if it is present for $\mu=0$.
If we wish to have a construction that works for larger values of $\mu$, we can generalize
the definition of an Evans-Krein function in the following way.

Instead of  \eq{Lmu} one can consider the $\mu$-deformed characteristic value problem
\begin{equation}
\LL(\la) u = \mu Su,
\label{KEfMod}
\end{equation}
and define a generalized Evans-Krein function as an Evans function for the pencil 
$\mathcal{K}_S(\la;\mu):=\LL(\la)-\mu S$, 
where $S$ is a suitable positive definite selfadjoint operator on $X$. 
The correspondence between traditional and graphical Krein indices/signatures  given in Theorem~\ref{theorem:signature} 
can be shown to hold when the eigenvalue branches are defined by the modified problem \eq{KEfMod} for a wide 
class of positive definite selfadjoint operators $S:X\to X$. In the case of spectral analysis of nonlinear wave equations linearized about 
a localized traveling wave solution (for which the Evans function construction described in \S\ref{sec:typical-construction} applies), 
it is enough to define $Su(x)=f(x)u(x)$ where $f(x)$ is a positive Schwartz-class function.  The rapid decay of $f$ as $|x|\to\infty$ 
guarantees that for all $\mu$ however large the asymptotic systems \eqref{eq:asymp-systems} are exactly the same as they are 
for $\mu=0$ and hence hyperbolicity is preserved.  
Such a generalized Evans-Krein function should be straightforward to implement in existing computer codes for 
evaluating Evans functions constructed as described in \S\ref{sec:typical-construction}.
Finally, we can observe that the freedom of choice of the operator $S$ in \eq{KEfMod} suggests a certain structural stability 
of the problem of determining Krein indices  and signatures of real characteristic values of the problem \eq{Lu}.  
A specific example of a kind of problem in which one needs to consider values of $\mu$ that are not small is the following.

\begin{example}[Stability of vortices in axisymmetric Bose-Einstein condensates.]
\label{ex:BEC}
The  wave function $\Psi = \Psi(x,t)$ of a Bose-Einstein condensate 
confined in a harmonic trap evolves according to 
the Gross-Pitaevskii equation 
\begin{equation}\label{GP}
i \hbar \psi_t  = 
\left( - \frac {1}{2}  \Delta  +  \frac{|x|^2}{2} + |\psi|^2 \right) \psi,
\end{equation}
where $\Delta$ is the Laplacian.
In the two-dimensional case ($x\in\RR^2$, $|x|^2:=x_1^2+x_2^2$) the Gross-Pitaevskii equation \eq{GP} has
solitary wave (vortex) solutions of the form
\begin{equation}
\psi(t,r,\theta) = e^{-i(m+1+\mu) t}e^{im\theta} w_{\mu,m}(r),
\end{equation}
where $(r,\theta)$ are polar coordinates, $m$ is the \emph{degree} of the vortex, $m+1+\mu$ is
the vortex rotation frequency and $w_{\mu,m}(r)$ is the radial vortex profile. 

The stability of vortices in Bose-Einstein condensates has been well-studied. 
For example, in \cite{KP} an Evans function was used to study the spectral stability of vortices 
of degree $m=1, 2$  which exist for $\mu \ge 0$. Reformulating the spectral stability problem studied in \cite{KP} 
in the language of selfadjoint operator pencils, spectral instability corresponds to 
the presence of non-real characteristic values (mode frequencies) $\la$ of the spectral problem $\LL_\mathrm{BEC}(\la)y=0$ 
for a function $y=(y_+(r),y_-(r))^\mathsf{T}\in L^2(\RR_+,\mathbb{C}^2;r\,dr)$, 
where $\LL_\mathrm{BEC}(\la)=\la L_1+L_0$ is the linear pencil with coefficients
\begin{equation}
\begin{split}
L_1&:=-\sigma_3, \\L_0&:=\left(-\frac{1}{2r}\frac{d}{dr}r\frac{d}{dr}+\frac{1}{2}r^2-m-1-\mu
+\frac{j^2+m^2}{2r^2}+2|w_{\mu,m}(r)|^2\right)\mathbb{I} \\
&\quad\quad\quad{}+\frac{jm}{r^2}\sigma_3 +|w_{\mu,m}(r)|^2\sigma_1.
\end{split}
\end{equation}
Here, $j$ is an integer satisfying $0 < |j|<2m$ that 
indexes an angular Fourier mode.  Note that in the indicated weighted $L^2$ space, this operator pencil $\LL_\mathrm{BEC}$ 
is selfadjoint for bounded $y$ that decay rapidly as $r\to \infty$.

In \cite{KP} the radial profile of the vortex and then the corresponding
characteristic values of $\LL_\mathrm{BEC}$ were calculated numerically for a moderately large range of values of $\mu \ge 0$.   
Note that $\mu\ge 0$ enters into the pencil $\LL_\mathrm{BEC}$ both through the linear term $-\mu\mathbb{I}$ 
and also through the radial profile $|w_{\mu,m}(r)|^2$ of the vortex.

For $m = 1$ all the characteristic values of $\LL_\mathrm{BEC}$ are real and thus the vortex 
is spectrally stable, while for $m = 2$  variation of the parameter $\mu$ leads to 
Hamiltonian-Hopf bifurcations creating ``bubbles'' of instability \cite{MacKay} in which two colliding characteristic values 
leave and subsequently return to the real axis. As it is typical in Hamiltonian systems, the underlying Hamiltonian 
has only a few negative eigenvalues limiting the number of possible real characteristic values of negative Krein signature. 
In fact, for unstable
vortices ($m=2$, $j = 2$) there is only one real characteristic value  of negative Krein signature that  repeatedly 
collides with the remaining real characteristic values of positive signature as the parameter $\mu$ increases. In \cite{KP} 
a rigorous theory of continuity of Krein signatures for finite systems of characteristic values 
was developed, and its results were used to identify the unique real characteristic value having negative 
Krein signature via a continuation (homotopy) method in $\mu$ starting from the explicitly solvable case of $\mu = 0$.  

The apparent weakness of this method is that a stability check for large values of $\mu$ 
requires a significant computational overhead --- the calculation of the characteristic values of the pencil $\LL_\mathrm{BEC}$ 
for a discrete sampling of a large interval of $\mu$ that is sufficiently dense to 
ensure continuity of characteristic values (and their Krein signatures). Naturally, 
once a characteristic value $\la$ has been calculated for any value of $\mu$ it is possible to determine its Krein signature 
by an evaluation of the particular quadratic form as in \eqref{simpleKreinexamples} at the corresponding characteristic vector.  
Unfortunately, the Evans function used in \cite{KP} was constructed from exterior products, and consequently it is not obvious 
how to numerically recover the characteristic vectors from the Evans function; this meant that the characteristic vectors and 
subsequently the Krein signature of each characteristic value had to be calculated separately.
By contrast,  it would be easy to capture 
the Krein signature of any real characteristic value directly from an Evans-Krein function with 
a minimal extra computational and coding cost for any value of $\mu$, 
thus making calculations for large $\mu$ directly accessible. 
\end{example}

\section{Index Theorems for Linearized Hamiltonian Systems\label{s:Counts}}
The graphical interpretation of Krein indices and signatures afforded by Theorem~\ref{theorem:signature} can be applied to develop 
very simple proofs of some well-known \emph{index theorems} (eigenvalue counts, inertia laws) for linearized Hamiltonian systems 
that appear frequently in stability analysis  \cite{Grillakis1988, GSS1, KKS, Pel,KapStef2012,Pel2013} where the central question 
is the presence of \emph{unstable spectrum} of \eq{JL} in the right-half complex plane. Our goal is to show that the counts 
of unstable spectrum provided by these index theorems can be interpreted in terms of topological indices of planar curves in a quadrant 
(a somewhat different method to obtain index theorems using the graphical Krein signature was used in \cite{BinBrown1988}).

Recall Example~\ref{ex:posdef}, in which it was shown that unstable spectrum can only be present if $L$ is indefinite. 
In applications to stability 
of nonlinear waves  $L$ typically represents the linearization of a Hamiltonian whose kinetic energy part is a positive-definite unbounded 
differential operator.  In such cases $L$ will itself be unbounded in the positive direction but may have a finite number of 
negative eigenvalues that might cause instability of the system. In this context, the purpose of an index theorem is to bound the number 
of possible points in the unstable (non-imaginary) part of the spectrum $\sigma(J\!L)$
of \eq{JL} in terms of the total dimension of negative eigenspaces of $L$ and information about the stable (purely imaginary) part 
of the spectrum.  It turns out that this information amounts to the number of real characteristic values $\la$ of the corresponding 
linear pencil $\LL$ given by \eq{JLpencil} having certain types of Krein signatures.  (Recall that the real characteristic values of $\LL$ 
are in one-to-one correspondence with the purely imaginary points $\nu$ of $\sigma(J\!L)$ simply by a rotation of the complex plane:  
$\la=i\nu$.)  Such theorems therefore generalize the simple statement that positivity of $L$ prevents instability.
An important application of index theorems is the justification of a numerical search for unstable spectrum; 
once the number of complex characteristic values of $\LL$ obtained numerically
(say via Argument Principle calculations involving an appropriate Evans function) equals the maximum 
number admitted by the index theorem,  the numerical search can be considered complete \cite{KP}.

For the sake of clarity, we will work exclusively in the finite-dimensional setting. All of our applications below will be based on 
Theorem~\ref{theorem-pencil-count}, and while some aspects of its proof are not difficult to generalize to the infinite-dimensional 
setting (for instance, by counting codimensions rather than dimensions), the possible presence of an infinite number of discrete 
eigenvalue branches complicates other aspects of the proof.

To explain the index theorems graphically, we shift the focus from the local information contained in the way the eigenvalue 
branches $\mu(\la)$ of \eq{gpencil} cross $\mu=0$ to the global topological information stored in the branches for $\mu$ not small.

\subsection{Graphical analysis of selfadjoint polynomial matrix pencils}
We begin by formulating some simple consequences of graphical analysis of eigenvalue curves $\mu=\mu(\la)$ corresponding to 
selfadjoint polynomial matrix pencils of arbitrary (odd) degree.  In \S\ref{sec:index_applied} below we will apply these results in the special 
case of linear pencils, but it is easy to envision future applications where greater generality could be useful.  
We use the following notation below.
\begin{definition}
If $L$ is a Hermitian $N\times N$ matrix, the number of strictly positive (respectively negative) eigenvalues of $L$ counted with multiplicity 
is denoted $N_+(L)$ (respectively $N_-(L)$).  

If $\LL$ is a selfadjoint $N\times N$ matrix pencil, the number of strictly positive (respectively negative) real characteristic values of $\LL$ 
counted with geometric multiplicity is denoted $N_+(\LL)$ (respectively $N_-(\LL)$). 
The number of eigenvalue curves $\mu=\mu(\la)$ of  $\LL$ 
passing through $(\la,\mu)=(0,0)$ with $\mu(\la)<0$ for sufficiently small positive (respectively negative) $\la$, will be denoted 
$Z^\downarrow_+(\LL)$ (respectively $Z^\downarrow_-(\LL)$), and analogously the number of eigenvalue curves $\mu=\mu(\la)$ 
for which $\mu(\la)>0$ for sufficiently small positive (respectively negative) $\la$ will be denoted 
$Z_+^\uparrow(\LL)$ (respectively $Z_-^\uparrow(\LL)$).
\label{def:counts}
\end{definition}

Note that whenever we refer to a number of eigenvalue curves as in Definition~\ref{def:counts}, we intend them to be 
counted weighted by their geometric multiplicity. It is important to stress that there may be curves counted in 
\emph{both} $Z^\downarrow_+(\LL)$ and  $Z^\downarrow_-(\LL)$, namely those vanishing at $\la=0$ to even order $m$ 
with $\mu^{(m)}(0)<0$. If the order of vanishing $m$ is even but $\mu^{(m)}(0)>0$, then the corresponding curve will  
be counted in \emph{neither} $Z^\downarrow_+(\LL)$ nor $Z^\downarrow_-(\LL)$.  Curves vanishing at $\la=0$ to odd 
order will be counted in exactly one of $Z^\downarrow_+(\LL)$ or $Z^\downarrow_-(\LL)$.

Of course $N_+(L)+N_-(L)+\dim(\Ker(L))=N$, and also 
$Z^{\downarrow}_-(\LL)+Z^{\uparrow}_-(\LL) = Z^{\downarrow}_+(\LL)+Z^{\uparrow}_+(\LL)= \dim(\Ker(\LL(0)))$.  
In general neither $N_+(\LL)$ nor $N_-(\LL)$ is necessarily finite, although the situation is better 
for polynomial pencils, for which we have the following result.

\begin{theorem}
\label{theorem-pencil-count}
Let $\LL=\LL(\la):=L_0+\la L_1 +\cdots + \la^{p}L_{p}$ be a selfadjoint matrix pencil
of odd degree $p=2\ell+1$ acting on $X=\mathbb{C}^N$, and suppose that $L_p$ is invertible.  
Then we have the fundamental graphical conservation law
\begin{equation}
N-2N_-(L_0)-Z^\downarrow_+(\LL)-Z^\downarrow_-(\LL) +\mathop{\sum_{\la\in\sigma(\LL)}}_{\la>0}\kappa(\la) -
\mathop{\sum_{\la\in\sigma(\LL)}}_{\la<0}\kappa(\la)=0.
\label{eq:graphical-conservation-law}
\end{equation}
Also, 
the following inequalities hold true:
\begin{equation}
N_\pm(\LL)\ge \left|N_-(L_0)+Z^\downarrow_\pm(\LL)-N_\mp(L_p)\right|.
\label{eq:graphical-inequality}
\end{equation}
\end{theorem}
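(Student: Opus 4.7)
My plan is to exploit the global real analyticity of the $N$ eigenvalue branches $\mu_j(\la)$ of $\LL(\la)$ on all of $\RR$ (available from Theorem~\ref{matrixhol} since $\LL$ is a selfadjoint matrix pencil) and to read the conservation law \eqref{eq:graphical-conservation-law} as the telescoping of a sign function along each branch; the inequalities \eqref{eq:graphical-inequality} will then fall out from a triangle inequality applied to the same telescoping. The Krein signatures $\kappa(\la)$ enter through their graphical form given by Theorem~\ref{theorem:signature} combined with Definition~\ref{graphsig}: at any real characteristic value $\la_0$, a single branch $\mu_j$ vanishing there to order $m$ contributes $\kappa_j(\la_0) = \tfrac{1}{2}\bigl[\sign(\mu_j(\la_0^+)) - \sign(\mu_j(\la_0^-))\bigr]$, since odd-order vanishing flips the sign (matching $\eta=\pm 1$) and even-order vanishing leaves it unchanged (matching $\kappa_\mathrm{g}=0$).

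First I would pin down the behavior of the branches at the four endpoints $\la = \pm\infty$ and $\la = 0^\pm$. Because $L_p$ is invertible and $p$ is odd, a simple perturbation argument applied to $\la^{-p}\LL(\la) = L_p + O(\la^{-1})$ shows that as $\la \to +\infty$ exactly $N_+(L_p)$ of the branches tend to $+\infty$ and $N_-(L_p)$ tend to $-\infty$, while as $\la \to -\infty$ these counts are swapped (this is where oddness of $p$ is essential). At $\la=0$, $\LL(0)=L_0$, so $N_+(L_0)$ branches are strictly positive and $N_-(L_0)$ strictly negative in a punctured neighborhood of $0$; the remaining $\dim\Ker(L_0)$ branches vanish at $\la=0$, and by Definition~\ref{def:counts} their behavior just to the right (resp. left) of $0$ is tallied by $Z^\uparrow_+(\LL)$ and $Z^\downarrow_+(\LL)$ (resp. $Z^\uparrow_-(\LL)$ and $Z^\downarrow_-(\LL)$), subject to $Z^\uparrow_\pm(\LL)+Z^\downarrow_\pm(\LL) = \dim\Ker(L_0)$.

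Next I would telescope per branch over $(0,+\infty)$ to obtain
\begin{equation*}
\sum_{\la_0>0,\,\mu_j(\la_0)=0}\kappa_j(\la_0) \;=\; \tfrac{1}{2}\bigl[\sign(\mu_j(+\infty)) - \sign(\mu_j(0^+))\bigr],
\end{equation*}
sum over $j=1,\dots,N$, and evaluate the right-hand side using the tallies above. Using $N_+(L_p)+N_-(L_p)=N$ together with $Z^\uparrow_+(\LL)-Z^\downarrow_+(\LL) = \dim\Ker(L_0) - 2Z^\downarrow_+(\LL)$, this yields $\sum_{\la>0}\kappa(\la) = N_-(L_0)+Z^\downarrow_+(\LL)-N_-(L_p)$, and the analogous telescoping on $(-\infty,0)$ gives $\sum_{\la<0}\kappa(\la) = N_+(L_p)-N_-(L_0)-Z^\downarrow_-(\LL)$. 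Subtraction rearranges directly into \eqref{eq:graphical-conservation-law}.

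For \eqref{eq:graphical-inequality}, I note that by the geometric-multiplicity interpretation in Theorem~\ref{multiplicities}, $N_+(\LL)=\sum_j Z_j^+$ where $Z_j^+$ is the total number of zeros of $\mu_j$ on $(0,+\infty)$. For each branch, $Z_j^+ \ge \bigl|\sum_{\la_0>0,\,\mu_j(\la_0)=0}\kappa_j(\la_0)\bigr|$, since every per-branch summand lies in $\{-1,0,+1\}$ and every nonzero summand corresponds to an actual zero of $\mu_j$; summing over $j$ and applying the triangle inequality gives $N_+(\LL) \ge \bigl|\sum_{\la>0}\kappa(\la)\bigr| = |N_-(L_0)+Z^\downarrow_+(\LL)-N_-(L_p)|$, and the same argument on $(-\infty,0)$ gives the $N_-(\LL)$ bound. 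The only step that goes beyond bookkeeping is the global branch tracking --- guaranteeing that the $\mu_j$ can be followed analytically over all of $\RR$ through any coincidences, and that the endpoint asymptotics can be assigned to individual branches consistently; this is precisely what Theorem~\ref{matrixhol} provides for selfadjoint holomorphic matrix pencils, and it is the structural ingredient that makes the per-branch accounting compatible with the characteristic-value counts weighted by geometric multiplicity.
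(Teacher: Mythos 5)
Your proposal is correct and is essentially the paper's own argument: the per-branch telescoping of $\tfrac12[\sign\mu_j(\la_0^+)-\sign\mu_j(\la_0^-)]$ over $(0,+\infty)$ and $(-\infty,0)$ is exactly the paper's ``conservation of curves'' for the quadrants $Q_\pm$, and it produces the same two intermediate identities $\sum_{\la>0}\kappa(\la)=N_-(L_0)+Z^\downarrow_+(\LL)-N_-(L_p)$ and $\sum_{\la<0}\kappa(\la)=N_+(L_p)-N_-(L_0)-Z^\downarrow_-(\LL)$, with the same endpoint asymptotics from $L_p$ (using oddness of $p$) and $L_0$, and the same net-versus-total crossing bound for the inequalities.
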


\begin{proof}
First note that by Definition~\ref{def:char_matrix} the total algebraic multiplicity of all characteristic values 
making up the spectrum $\sigma(\LL)$  is equal to $pN$ and hence finite.
According to Theorem~\ref{matrixhol}, the $N$ eigenvalue curves $\mu=\mu(\la)$ associated with the selfadjoint pencil $\LL$ 
may be taken as holomorphic functions of $\la\in\mathbb{R}$.  
The assumption that $L_p$ is invertible implies that none of the eigenvalue curves can be a constant function, and hence each 
curve has a well-defined finite order of vanishing at each corresponding real characteristic value of $\LL$.  Let $Q_\pm$ denote 
the open quadrants of the $(\la,\mu)$-plane corresponding to $\mu<0$ and $\pm\la>0$.  By analyticity and hence continuity of 
the eigenvalue branches $\mu=\mu(\la)$, the number of branches entering $Q_\pm$  necessarily equals the number of branches 
leaving $Q_\pm$ (with increasing $\la$).

Consider first the quadrant $Q_-$.  Curves $\mu=\mu(\la)$ can enter $Q_-$ with increasing $\la$
in only two ways:  from $\la=-\infty$ (corresponding to curves with $\mu(\la)<0$ asymptotically
as $\la\to -\infty$) and through the negative $\la$-axis.  Since the odd-degree polynomial pencil $\LL$ is dominated by its invertible 
leading term $\la^pL_p$ as $|\la|\to\infty$ (as is easy to see by applying Rouch\'e's Theorem to the characteristic polynomial 
of the matrix $\la^{-p}\LL(\la)$) , 
the number of curves entering $Q_-$ from $\la=-\infty$ is precisely $N_+(L_p)$.
Similarly, curves $\mu=\mu(\la)$ can exit $Q_-$ with increasing $\la$ in exactly three ways:  through the negative $\mu$-axis, 
through the origin $(\la,\mu)=(0,0)$, and through the negative $\la$-axis.  Since $\LL(0)=L_0$,
the number of curves exiting $Q_-$ through $\mu<0$ is precisely $N_-(L_0)$, and $Z^\downarrow_-(\LL)$ is the count of curves 
exiting $Q_-$ through the origin.  Therefore ``conservation of curves'' for the quadrant $Q_-$ reads
\begin{multline}
N_+(L_p)-N_-(L_0)-Z^\downarrow_-(\LL) \\
{}= \text{Net number of curves exiting $Q_-$ through $\la<0, \mu = 0$}.
\label{eq:Q-minus-count-0}
\end{multline}
If $\la_0$ is a real characteristic value of $\LL$, then according to Definition~\ref{graphsig} and Theorem~\ref{theorem:signature}, 
the Krein signature $\kappa(\la_0)$ exactly counts the net number of eigenvalue branches $\mu=\mu(\la)$ vanishing at $\la_0$ 
exiting the half-plane $\mu<0$ with increasing $\la$.  Therefore conservation of curves for $Q_-$ can be rewritten equivalently as
\begin{equation}
N_+(L_p)-N_-(L_0)-Z^\downarrow_-(\LL)=\mathop{\sum_{\la\in\sigma(\LL), \la<0}}\kappa(\la).
\label{eq:Q-minus-count}
\end{equation}
A conservation law for curves in $Q_+$ is obtained similarly:
\begin{multline}
N_-(L_p) - N_-(L_0)-Z^\downarrow_+(\LL)\\
\begin{aligned} &= 
\text{Net number of curves entering $Q_+$ through $\la>0$, $\mu = 0$}\\
& = -\sum_{\la\in\sigma(\LL), \la>0}\kappa(\la).
\end{aligned}
\label{eq:Q-plus-count}
\end{multline}
Adding together \eqref{eq:Q-minus-count} and \eqref{eq:Q-plus-count}, and taking into account the fact that $N_-(L_p)+N_+(L_p)=N$ 
because $L_p$ is Hermitian $N\times N$ and invertible, we arrive at the graphical conservation law \eqref{eq:graphical-conservation-law}.

The inequalities \eqref{eq:graphical-inequality} follow immediately from the equations \eqref{eq:Q-minus-count-0}
and \eqref{eq:Q-plus-count} by noting that the absolute value of the net number of curves crossing the positive or negative $\la$-semiaxis 
is a lower bound for the number of curves crossing the same axis, and the latter is exactly a count of real characteristic 
values according to their geometric multiplicity.
\end{proof}

\begin{figure}[h]
\begin{center}
\includegraphics{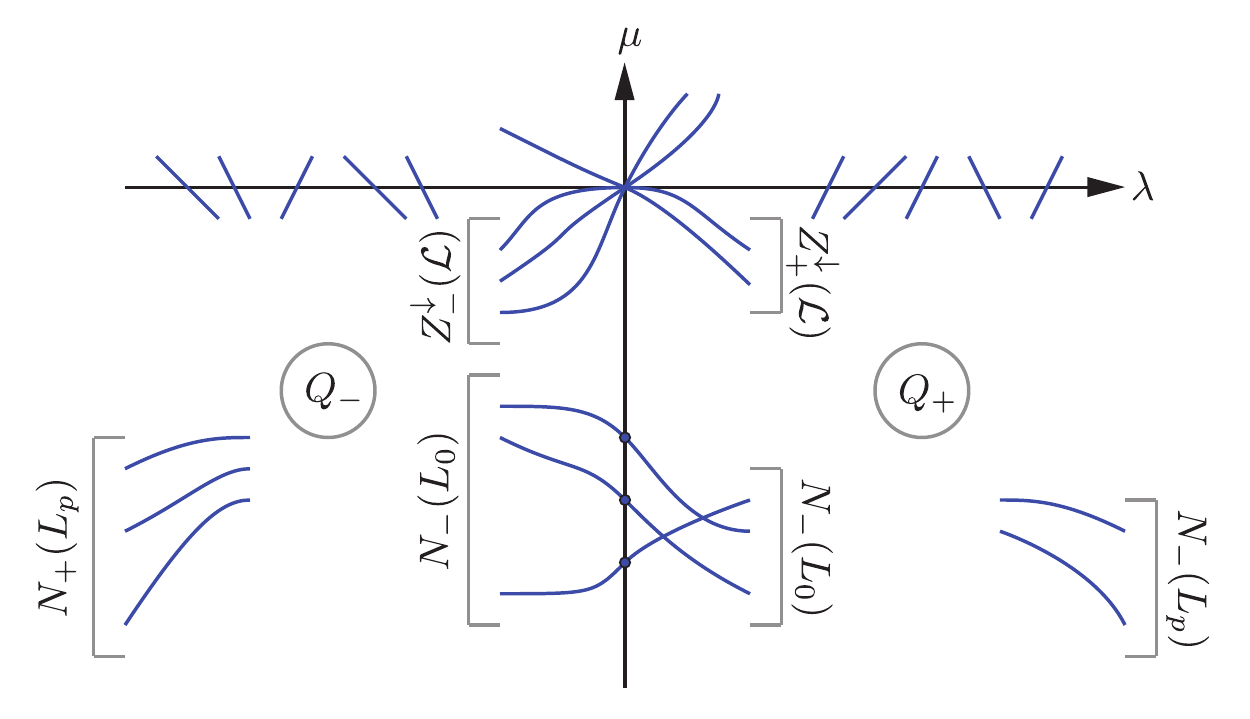}
\end{center}
\caption{The proof of Theorem~\ref{theorem-pencil-count} is essentially graphical in nature.  In each quadrant $Q_\pm$ the number of 
curves entering and exiting through infinity, the negative $\mu$-axis, and through the origin must be balanced by the net numbers exiting 
through the positive and negative $\la$-semiaxes as calculated by summing Krein signatures.  The negative 
eigenvalues of $L_0$ are plotted with blue dots.}
\label{fig:graphical-conservation-law}
\end{figure}

The key ideas of the proof are illustrated in Figure~\ref{fig:graphical-conservation-law}.
As the proof is essentially topological in nature, the analyticity of the branches implied by Theorem~\ref{matrixhol} 
could be replaced by mere continuity.

\subsection{Application to linearized Hamiltonian systems}
\label{sec:index_applied}
We now return to \eqref{JL}, the spectral problem $J\!Lu=\nu u$ for a linearized Hamiltonian system, where $L$ and $J$ are (Hermitian 
and invertible skew-Hermitian, respectively) matrices of size $N\times N$,  and we recall that in the finite-dimensional setting the underlying 
space $X=\CC^N$ necessarily has even dimension $N=2n$ due to invertibility of the skew-Hermitian matrix $J$. 
Let the total algebraic multiplicity of all points $\nu\in\sigma(J\!L)$ with $\Re\{\nu\}\neq 0$ be denoted $2n_\mathrm{u}$ 
(necessarily an even number due to the basic Hamiltonian symmetry $\nu\to -\overline{\nu}$).  This is the dimension of the 
\emph{unstable invariant subspace} of $J\!L$.  The total algebraic multiplicity of all purely imaginary points of $\sigma(J\!L)$ 
is therefore $2n-2n_\mathrm{u}=2n_\mathrm{s}$, so $2n_\mathrm{s}$ is the dimension of the \emph{(spectrally) 
stable invariant subspace} 
of $J\!L$. The first result is the following.
\begin{theorem}[Index theorem for linearized Hamiltonians]
Let $L$ and $J$ be $2n\times 2n$ matrices, and let $L$ be Hermitian and $J$ be invertible and skew-Hermitian.  
Let $\LL=\LL(\la):=L-\la K$, $K:=(iJ)^{-1}$, be the associated linear matrix pencil.  Then
\begin{equation}
n_\mathrm{u}=N_-(L) - \zeta -\sum_{\la\in\sigma(\LL), \la>0}\kappa^+(\la)
-\sum_{\la\in\sigma(\LL), \la<0}\kappa^-(\la).
\label{eq:unstable-count-0}
\end{equation}
The Krein indices $\kappa^\pm(\la)\ge 0$ are associated with real characteristic values $\la$
of the selfadjoint pencil $\LL$, which in turn correspond via rotation of the complex plane to purely imaginary points in $\sigma(J\!L)$.
Here the quantity $\zeta$ is given by
\begin{equation}
\zeta=\frac{1}{2}\dim(\gKer(J\!L)) -\frac{1}{2}\left(Z_+^\downarrow(\LL)+Z_-^\downarrow(\LL)\right),
\label{eq:zeta-0}
\end{equation}
or equivalently by
\begin{equation}
\zeta:=\frac{1}{2}\left(\dim(\gKer(J\!L))-2\dim(\Ker(L))\right) +\frac{1}{2}\left( Z^\uparrow_+(\LL)+Z^\uparrow_-(\LL)\right).
\label{eq:zeta-1}
\end{equation}
\label{theorem-linearized-Hamiltonians-0}
\end{theorem}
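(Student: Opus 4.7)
The plan is to derive \eqref{eq:unstable-count-0} as a direct algebraic consequence of the graphical conservation law \eqref{eq:graphical-conservation-law} from Theorem~\ref{theorem-pencil-count}, applied to the selfadjoint linear matrix pencil $\LL(\la)=L-\la K$, which has odd degree $p=1$ and invertible leading coefficient $-K=-(iJ)^{-1}$, so the hypotheses of that theorem are met with $N=2n$ and $L_0=L$. The conservation law thus reads
\begin{equation*}
2n-2N_-(L)-Z_+^\downarrow(\LL)-Z_-^\downarrow(\LL)+\sum_{\la\in\sigma(\LL),\la>0}\kappa(\la)-\sum_{\la\in\sigma(\LL),\la<0}\kappa(\la)=0,
\end{equation*}
and the remaining task is to convert the global dimension $2n$ into $2n_\mathrm{u}$ by subtracting off the total algebraic multiplicity of the purely imaginary spectrum of $J\!L$, and to repackage the Krein signatures $\kappa(\la)=\kappa^+(\la)-\kappa^-(\la)$ into the asymmetric form appearing in \eqref{eq:unstable-count-0}.

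Two preparatory identifications are needed. First, since $K$ is invertible, $\LL(\la)u=0$ is equivalent to $K^{-1}Lu=\la u$, i.e.\ to $-iJ\!Lu=\la u$, and the equivalence extends to a bijection between root spaces of $\LL$ at $\la$ and generalized eigenspaces of $J\!L$ at $\nu=-i\la$. Consequently the purely imaginary part of $\sigma(J\!L)$ has total algebraic multiplicity $2n_\mathrm{s}=\sum_{\la\in\sigma(\LL)\cap\RR}\alpha(\la)$, and in particular $\dim\gKer(J\!L)=\alpha(0)$, while $\dim\Ker(L)$ is the \emph{geometric} multiplicity at $\la_0=0$. Second, inspection of Definition~\ref{graphsig} shows that for each eigenvalue branch $\mu_j(\la)$ of \eqref{Lmu} vanishing to order $m_j$ at a real characteristic value $\la_0$, the identity $\kappa^+_\mathrm{g}(\mu_j,\la_0)+\kappa^-_\mathrm{g}(\mu_j,\la_0)=m_j$ holds irrespective of the parity of $m_j$; summing over branches and invoking Theorem~\ref{multiplicities} gives $\kappa^+(\la_0)+\kappa^-(\la_0)=\alpha(\la_0)$ for every real characteristic value.

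With these in hand, substitute $\kappa(\la)=\kappa^+(\la)-\kappa^-(\la)$ in the conservation law, write $2n=2n_\mathrm{u}+2n_\mathrm{s}$ with $2n_\mathrm{s}=\alpha(0)+\sum_{\la>0}(\kappa^+(\la)+\kappa^-(\la))+\sum_{\la<0}(\kappa^+(\la)+\kappa^-(\la))$, and subtract. The sums over $\kappa^-$ for $\la>0$ and over $\kappa^+$ for $\la<0$ cancel, leaving
\begin{equation*}
2n_\mathrm{u}=2N_-(L)+Z_+^\downarrow(\LL)+Z_-^\downarrow(\LL)-\alpha(0)-2\sum_{\la>0}\kappa^+(\la)-2\sum_{\la<0}\kappa^-(\la),
\end{equation*}
which upon division by two is exactly \eqref{eq:unstable-count-0} with $\zeta$ given by \eqref{eq:zeta-0}. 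The second formula \eqref{eq:zeta-1} then follows from the trivial counting identity $Z_\pm^\downarrow(\LL)+Z_\pm^\uparrow(\LL)=\dim\Ker(\LL(0))=\dim\Ker(L)$.

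The one step that genuinely requires care — and is the main conceptual obstacle — is the multiplicity bookkeeping at $\la_0=0$, since $\LL(0)=L$ may be singular and $\la_0=0$ may well be a defective characteristic value. The correct quantity to use in \eqref{eq:zeta-0} is the \emph{algebraic} multiplicity $\alpha(0)=\dim\gKer(J\!L)$, not $\dim\Ker(L)$; the difference between these two — which is precisely measured by $Z_+^\uparrow+Z_-^\uparrow$ (equivalently by $2\dim\Ker(L)-Z_+^\downarrow-Z_-^\downarrow$) — accounts for the appearance of the $Z^\downarrow$ terms in \eqref{eq:zeta-0} versus the $Z^\uparrow$ terms in \eqref{eq:zeta-1}. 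Once this distinction is tracked correctly through the cancellation, the proof is a matter of rearranging sums.
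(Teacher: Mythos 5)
Your proposal is correct and follows essentially the same route as the paper's proof: both apply the graphical conservation law of Theorem~\ref{theorem-pencil-count} to the linear pencil, split $2n=2n_\mathrm{u}+2n_\mathrm{s}$, express $2n_\mathrm{s}$ as $\sum_{\la\in\sigma(\LL)\cap\RR}\bigl(\kappa^+(\la)+\kappa^-(\la)\bigr)$ with $\kappa^+(0)+\kappa^-(0)=\dim(\gKer(J\!L))$, and cancel using $\kappa=\kappa^+-\kappa^-$ to isolate the asymmetric sums. Your explicit verification that $\kappa^+_\mathrm{g}(\mu_j,\la_0)+\kappa^-_\mathrm{g}(\mu_j,\la_0)=m_j$ for each branch regardless of parity is a point the paper asserts without spelling out, but it is the same argument.
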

\begin{proof}
We apply Theorem~\ref{theorem-pencil-count} to the selfadjoint linear pencil $\LL$ associated with $L$ and $J$.  Here $p=1$, 
and $L_0=L$ while $L_1=-K$.  Note that $K$ is invertible. With $N=2n=2n_\mathrm{u}+2n_\mathrm{s}$, the graphical conservation 
law \eqref{eq:graphical-conservation-law} reads (upon rearrangement and division by $2$)
\begin{equation}
n_\mathrm{u}=N_-(L)+\frac{1}{2}(Z^\downarrow_+(\LL)+Z^\downarrow_-(\LL)) - n_\mathrm{s}
-\frac{1}{2}\mathop{\sum_{\la\in\sigma(\LL)}}_{\la>0}\kappa(\la)+\frac{1}{2}\mathop{\sum_{\la\in\sigma(\LL)}}_{\la<0}\kappa(\la).
\label{eq:unstable-count-1}
\end{equation}
Of course by definition the Krein signatures $\kappa(\la)$ appearing on the right-hand side can be written as differences 
of two positive Krein indices:  
\begin{equation}
\kappa(\la)=\kappa^+(\la)-\kappa^-(\la).  
\label{eq:kappa-difference}
\end{equation}
But $n_\mathrm{s}$ can also be written in terms of the Krein indices of the real characteristic values of $\LL$.  
Indeed, the total algebraic multiplicity of a purely imaginary point $\nu\in\sigma(J\!L)$ is equal to that of the 
corresponding real characteristic value $\la=i\nu$ of $\LL$, and the latter can be expressed
as the sum of the Krein indices $\kappa^+(\la)+\kappa^-(\la)$.  Therefore, summing over
all real characteristic values of $\LL$ we obtain:
\begin{equation}
2n_\mathrm{s} = \sum_{\la\in\sigma(\LL), \la\in\mathbb{R}}\left(\kappa^+(\la)+\kappa^-(\la)\right).
\end{equation}
Substituting this identity and \eqref{eq:kappa-difference} into \eqref{eq:unstable-count-1}, and noting that as a count 
of algebraic multiplicity we have $\kappa^+(0)+\kappa^-(0)=\dim(\gKer(J\!L))$ if $0\in\sigma(J\!L)$, we obtain
\eqref{eq:unstable-count-0} with $\zeta$ given in the form \eqref{eq:zeta-0}.
Since $Z^{\uparrow}_+ (\LL)+ Z^{\downarrow}_+(\LL) =Z^{\uparrow}_- (\LL)+ 
Z^{\downarrow}_-(\LL)= \dim(\Ker (\LL(0)))=\dim(\Ker(L))$ the 
alternative formula \eqref{eq:zeta-1} follows immediately.
\end{proof}

Recall that if  $J$ and $L$ are real matrices, then in addition to the Hamiltonian symmetry $\sigma(J\!L)=-\overline{\sigma(J\!L)}$ 
we have the real symmetry $\sigma(J\!L)=\overline{\sigma(J\!L)}$, in which case system is said to have \emph{full Hamiltonian symmetry}.  
In many applications great utility is gleaned from  this extra symmetry, but one should be aware that the reality 
of the matrices $J$ and $L$ (or operators in the infinite-dimensional setting) is essentially tied to a choice of basis or representation.  
In other words, within the usual Hilbert space axioms one finds no inherent notion of what it means for a vector or operator to 
be ``real''. Therefore, a problem can possess full Hamiltonian symmetry even though it may not be obvious that the operators 
are ``real'' due to poor choice of coordinates.  
A useful general notion of reality is based instead on the existence of an appropriate antiholomorphic involution on $X$ 
and can be formulated as follows (compare with \cite[Assumption 2.15]{KapHar}).
\begin{definition}
Let $I:X\to X$ be an involution on a Hilbert space $X$ that is unitary: $\|I(u)\|=\|u\|$ for all $u\in X$, 
and conjugate linear:  $I(\alpha u+\beta v)=\overline{\alpha}I(u)+\overline{\beta}I(v)$ for all  $u,v\in X$ and $\alpha,\beta\in\CC$.
A linear operator $A:X\to X$ is said to be real (with respect to $I$) if it 
commutes with $I$:  $AI(u)=I(Au)$ for all $u\in X$.  Similarly, a vector $u\in X$ is real (with respect to $I$) if $u=I(u)$.
\label{Def:real}
\end{definition}

The involution $I$ is simply an abstraction of complex conjugation as might be applied to complex coordinates of vectors in $X$.  
In the case of real matrices $J$ and $L$ we have the following important corollary of Theorem~\ref{theorem-linearized-Hamiltonians-0}.

\begin{corollary}[Index theorem for linearized Hamiltonians with full Hamiltonian symmetry]
Let $L$ and $J$ be $2n\times 2n$ matrices, and let $L$ be Hermitian and $J$ be invertible and skew-Hermitian. 
Suppose also that $L$ and $J$ are real with respect to a unitary antiholomorphic involution $I:\mathbb{C}^{2n}\to\mathbb{C}^{2n}$.  
Then with $\zeta$ being given by either \eqref{eq:zeta-0} or \eqref{eq:zeta-1}, 
\begin{equation}
n_\mathrm{u} = N_-(L)-\zeta-2\sum_{\la\in\sigma(\LL), \la>0}\kappa^+(\la)
= N_-(L)-\zeta-2\sum_{\la\in\sigma(\LL), \la<0}\kappa^-(\la),
\label{eq:symmetric-count}
\end{equation}
and moreover, $\zeta$ can be simplified as follows:
\begin{equation}
\begin{split}
\zeta &= \frac{1}{2}\dim(\gKer(J\!L)) - Z^-(\LL) \\
&=\frac{1}{2}\left(\dim(\gKer(J\!L))-2\dim(\Ker(L))\right) + Z^+(\LL),
\end{split}
\label{eq:simple-zeta}
\end{equation}
where $Z^+(\LL)$ (respectively $Z^-(\LL)$) is the number of curves $\mu=\mu(\la)$ vanishing at $\la=0$ 
whose first nonzero derivative $\mu^{(m)}(\la)$ is positive (respectively negative).
If in addition%
\footnote{The condition $\dim(\gKer(J\!L))=2\dim(\Ker(L))$  holds in many examples appearing in stability of nonlinear waves, 
but in fact it is completely independent of the presence of full Hamiltonian symmetry of the system and invertibility of $J$. 
A trivial example with $L$ identically zero and $J$ canonical (see \eq{eq:canonical-form}) yields $\dim(\gKer(J\!L))=\dim(\Ker(L)) = 2n$.
Similarly  $L = \mbox{diag\,}(1, 0, 1,0)$ and $J$ canonical (for $n=2$) gives  $\dim(\gKer(J\!L))=\dim(\Ker(L)) = 2$. 
On the other hand, there are also examples with $\dim(\gKer(J\!L)) > 2\dim(\Ker(L))$.}
one has $\dim(\gKer(J\!L))=2\dim(\Ker(L))$, then $\zeta=Z^+(\LL)$.
\label{corollary-full-symmetry}
\end{corollary}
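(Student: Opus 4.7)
The plan is to derive the corollary from Theorem~\ref{theorem-linearized-Hamiltonians-0} by exploiting the extra left--right symmetry that the antiholomorphic involution $I$ imposes on the spectral picture of the selfadjoint linear pencil $\LL(\la)=L-\la K$.  First I would verify that, for $\la\in\RR$,
\[
\LL(\la)\circ I = I\circ \LL(-\la),
\]
an identity that follows from the reality of $L$ and $J$ with respect to $I$:  $L$ commutes with $I$, while $K=-iJ^{-1}$ \emph{anti}-commutes with $I$, because $J^{-1}$ is real but the factor $i$ picks up a sign when passed through the conjugate-linear $I$.  Since $I$ is a bijection, this relation implies that $u\mapsto I(u)$ is a bijection of the $\mu$-eigenspace of $\LL(-\la)$ onto the $\mu$-eigenspace of $\LL(\la)$ for every real $\la$ and every real $\mu$.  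Consequently, the multiset of real-analytic eigenvalue branches $\{\mu_j(\la)\}$ furnished by Theorem~\ref{matrixhol} is invariant under $\la\mapsto -\la$:  for each $j$ there is some $j'$ with $\mu_{j'}(\la)=\mu_j(-\la)$.

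Given this branch symmetry, both expressions in \eqref{eq:symmetric-count} follow from \eqref{eq:unstable-count-0}.  If $\mu_j$ vanishes at $\la_0$ to order $m$, then its companion $\mu_{j'}(\la):=\mu_j(-\la)$ vanishes at $-\la_0$ to the same order with $\mu_{j'}^{(m)}(-\la_0)=(-1)^m\mu_j^{(m)}(\la_0)$.  Inspection of Definition~\ref{graphsig} then shows that for even $m$ the graphical Krein indices at $\la_0$ and $-\la_0$ agree, while for odd $m$ they are interchanged.  Grouping branches crossing $(\la_0,0)$ by the parity of $m$ and summing over all positive real characteristic values gives
\[
\sum_{\la\in\sigma(\LL),\,\la>0}\kappa^+(\la)=\sum_{\la\in\sigma(\LL),\,\la<0}\kappa^-(\la),
\]
together with the mirror identity obtained by swapping $+$ and $-$.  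Substituting these into \eqref{eq:unstable-count-0} collapses its two Krein sums into a single sum taken with weight $2$, producing both forms of \eqref{eq:symmetric-count}.

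For the simplification \eqref{eq:simple-zeta} of $\zeta$ I would analyze the branches passing through the origin.  A purely local Taylor expansion (with no appeal to symmetry) already yields $Z^\downarrow_+(\LL)=Z^-(\LL)$:  for $\la>0$ small, $\la^m>0$, so $\sign(\mu_j(\la))=\sign(\mu_j^{(m)}(0))$, and the condition $\mu_j(\la)<0$ for small $\la>0$ is exactly the one defining $Z^-(\LL)$.  Full Hamiltonian symmetry is what supplies the less immediate identity $Z^\downarrow_-(\LL)=Z^-(\LL)$:  for $\la<0$ small, $\sign(\mu_j(\la))=(-1)^m\sign(\mu_j^{(m)}(0))$, so $Z^\downarrow_-(\LL)$ equals the number of even-order branches in $Z^-(\LL)$ plus the number of odd-order branches in $Z^+(\LL)$.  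A self-symmetric branch $\mu_j(\la)=\mu_j(-\la)$ is necessarily an even function and so has even order of vanishing at the origin; therefore every odd-order branch belongs to a genuine two-element orbit under $\la\mapsto -\la$ in which one member has positive and the other negative leading derivative, making the odd-order contributions to $Z^+(\LL)$ and $Z^-(\LL)$ equal.  Combining these observations, $Z^\downarrow_+(\LL)+Z^\downarrow_-(\LL)=2Z^-(\LL)$, and \eqref{eq:zeta-0} collapses to the first equality in \eqref{eq:simple-zeta}.  The second equality is then immediate from $Z^+(\LL)+Z^-(\LL)=\dim(\Ker(L))$, and the final statement $\zeta=Z^+(\LL)$ is the special case $\dim(\gKer(J\!L))=2\dim(\Ker(L))$.

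The main obstacle will be the bookkeeping in the third step:  one must track how each branch through the origin contributes to $Z^\downarrow_\pm$ and $Z^\pm$ on both sides of $\la=0$, carefully distinguishing self-symmetric branches from those belonging to a genuine pair under the involution $\la\mapsto -\la$, and in each case converting sign-of-derivative data into the correct count.  Once the elementary observation that odd-order branches cannot be self-symmetric is isolated, the casework becomes routine.
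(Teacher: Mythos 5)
Your proposal is correct and follows essentially the same route as the paper: the unitary antiholomorphic involution forces the system of eigenvalue curves $\mu=\mu(\la)$ to be symmetric under $\la\mapsto-\la$, which yields $\kappa^\pm(\la_0)=\kappa^\mp(-\la_0)$ and hence the collapse of the two Krein sums in \eqref{eq:unstable-count-0} into one with weight $2$, while the even split of odd-order branches through the origin gives $Z^\downarrow_+(\LL)+Z^\downarrow_-(\LL)=2Z^-(\LL)$ and thus \eqref{eq:simple-zeta}. Your write-up is in fact somewhat more explicit than the paper's (the intertwining relation $\LL(\la)\circ I=I\circ\LL(-\la)$ and the symmetry-free identity $Z^\downarrow_+(\LL)=Z^-(\LL)$ are spelled out rather than left implicit), but the underlying argument is the same.
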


\begin{proof}
The fact that $J\!L$ commutes with the involution $I$ forces the whole system of curves $\mu=\mu(\la)$ to be symmetric 
with respect to reflection through the vertical $\mu$-axis.  This implies that if $\la_0>0$ is a real characteristic value (with root space $U$) 
of the associated linear pencil $\LL$, then so is $-\la_0$ (with root space $I(U)$), and from the definition of graphical Krein indices 
(see Definition~\ref{graphsig} and Theorem~\ref{theorem:signature}) we see that $\kappa^\pm(\la_0)=\kappa^\mp(-\la_0)$ 
due to the symmetry of the curves.  Therefore, both sums on the right-hand side of \eqref{eq:unstable-count-0} are equal, 
and \eqref{eq:symmetric-count} follows immediately.

The simplified formulae \eqref{eq:simple-zeta} for $\zeta$ follow from the left-right symmetry of the
union of curves $\mu=\mu(\la)$ passing through the origin $(\la,\mu)=(0,0)$.  Indeed, the symmetry
implies that the number of curves vanishing to odd order $m$ is even and divides equally into the number of those 
curves for which $\mu^{(m)}(0)>0$ and that of their reflections through the $\mu$-axis.  This implies 
that $Z_+^\downarrow(\LL)+Z_-^\downarrow(\LL)=2Z^-$, and then since $\dim(\Ker(L))=\dim(\Ker(J\!L))$ 
is the total number of analytic branches through the origin, the proof of \eqref{eq:simple-zeta} is complete.  
The fact that the condition $\dim(\gKer(J\!L))=2\dim(\Ker(L))$ implies that $\zeta = Z^+(\LL)$ is obvious. 
\end{proof}

The next application concerns spectral problems for linearized Hamiltonian systems for which $L$ and $J$ 
have so-called \emph{canonical form}, that is, the $2n\times 2n$ matrices can be written in terms of $n\times n$ blocks as follows:
\begin{equation}
J=\begin{pmatrix}0 & \mathbb{I}\\-\mathbb{I} & 0\end{pmatrix},\quad
L=\begin{pmatrix}L_+ & 0\\0 & L_-\end{pmatrix}
\label{eq:canonical-form}
\end{equation}
where $\mathbb{I}$ is the $n\times n$ identity and $L_\pm$ are $n\times n$ Hermitian matrices.
Note that in this case $J$ is automatically invertible.  This form occurs naturally in many applications, 
but in fact the general $J\!L$ spectral problem \eqref{JL} can be rewritten in canonical form by an appropriate inflation 
of the dimension of the problem \cite{KKS}. Under the assumption that $\Ker (L_+) \perp \Ker (L_-)$,  a lower bound for the 
number of real points $\nu\in\sigma(J\!L)$  can be given in terms of  the difference of the negative indices of operators $M_\pm$ 
that are suitable projections of the operators $L_\pm$.

\begin{theorem}[Lower bound for real points of $\sigma(J\!L)$]
Let $L$ and $J$ be $2n\times 2n$ matrices having canonical form \eqref{eq:canonical-form}.
Suppose also that $\Ker(L_+)\perp\Ker(L_-)$, and let $V$ denote the orthogonal complement of 
$\Ker(L_+)\oplus\Ker(L_-)$ in $Y=\mathbb{C}^n$ with corresponding orthogonal projection $P:Y\to V$. 
If $N_\mathbb{R}(J\!L)\le 2n_\mathrm{u}$ denotes the number of nonzero real points in $\sigma(J\!L)$ 
counted with geometric multiplicity (necessarily an even number by the basic Hamiltonian symmetry
$\sigma(J\!L)=-\overline{\sigma(J\!L)}$), then
\begin{equation}
\frac{1}{2}N_\mathbb{R}(J\!L)\ge \left| N_-(M_+)-N_-(M_-)\right|, \qquad
M_\pm:=PL_\pm P.
\label{eq:JL-inequality}
\end{equation}
\label{theorem-JL-inequality}
\end{theorem}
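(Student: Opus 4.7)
The plan is to translate the spectral problem for $JL$ on $\mathbb{C}^{2n}$ into a spectral problem for a selfadjoint linear pencil on $V$, and then apply the inequality \eqref{eq:graphical-inequality} of Theorem~\ref{theorem-pencil-count}.

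The first step is a reduction. Given $u=(u_1,u_2)^\mathsf{T}\in\mathbb{C}^{2n}$ with $JLu=\nu u$ and $\nu\ne 0$, I would decompose each $u_j\in\mathbb{C}^n$ according to the orthogonal splitting $\mathbb{C}^n=\Ker(L_+)\oplus\Ker(L_-)\oplus V$ afforded by the hypothesis $\Ker(L_+)\perp\Ker(L_-)$. Using that $L_\pm$ annihilate $\Ker(L_\pm)$ and that $\Ran(L_\pm)\subseteq\Ker(L_\pm)^\perp$, the eigenvalue system $L_-u_2=\nu u_1$, $L_+u_1=-\nu u_2$ reduces (after projection with $P$) to $M_+w_1=-\nu w_2$ and $M_-w_2=\nu w_1$ on $V$, where $w_j:=Pu_j$, while the $\Ker(L_\pm)$-components of $u_1$ and $u_2$ are uniquely determined by $w_1,w_2$ and $\nu$. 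Eliminating $w_2$ yields $M_-M_+w_1=-\nu^2w_1$. The map $u\mapsto w_1$ is thus a geometric-multiplicity-preserving bijection between $\Ker(JL-\nu\II)$ and $\Ker(M_-M_++\nu^2\II)$ in $V$; since nonzero real $\nu$ come in pairs $\pm\nu$ both giving the same $-\nu^2<0$, this yields $\tfrac12 N_\mathbb{R}(JL)=K$, with $K$ the number of negative real eigenvalues of $M_-M_+:V\to V$ counted with geometric multiplicity.

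The second step bounds $K$ from below. If $M_+$ and $M_-$ are both invertible on $V$, consider the two selfadjoint linear pencils on $V$
\[
\tilde\LL_1(\la):=M_+-\la M_-^{-1},\qquad \tilde\LL_2(\la):=M_--\la M_+^{-1}.
\]
Their real characteristic values coincide (with geometric multiplicity) with the real spectra of $M_-M_+$ and $M_+M_-$, respectively, and these two products share the same nonzero eigenvalues with matching geometric multiplicities, so $N_-(\tilde\LL_1)=N_-(\tilde\LL_2)=K$. Both leading coefficients $-M_\pm^{-1}$ are invertible with $N_+(-M_\pm^{-1})=N_-(M_\pm)$, so \eqref{eq:graphical-inequality} applied to each pencil yields
\[
K\ge\left|N_-(M_+)+Z_1-N_-(M_-)\right|\quad\text{and}\quad K\ge\left|N_-(M_-)+Z_2-N_-(M_+)\right|
\]
with $Z_1,Z_2\ge 0$. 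A straightforward case analysis on the sign of $N_-(M_+)-N_-(M_-)$ shows that at least one of these two inequalities directly yields $K\ge|N_-(M_+)-N_-(M_-)|$. Combined with the first step, this proves the theorem whenever both $M_\pm$ are invertible on $V$.

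The remaining task, and the main obstacle, is to treat the general case where $M_+$ or $M_-$ may be singular. A natural approach is to perturb: set $L^\epsilon_\pm:=L_\pm+\epsilon P^*P$ so that $M^\epsilon_\pm=M_\pm+\epsilon\II$ is invertible on $V$ for $\epsilon>0$ small and $N_-(M^\epsilon_\pm)=N_-(M_\pm)$; the second step then gives $K^\epsilon\ge|N_-(M_+)-N_-(M_-)|$, while the first step applied to the perturbed system gives $K^\epsilon=\tfrac12 N_\mathbb{R}(JL^\epsilon)$. The difficulty is passing the bound to the limit $\epsilon\to 0^+$: since $M_-M_+$ is not Hermitian in general, the geometric multiplicity of its eigenvalues is only upper-semicontinuous under perturbation, and perturbed nonzero real eigenvalues of $JL^\epsilon$ may additionally emerge from zero eigenvalues of $JL$ via the nontrivial Jordan structure of $\gKer(JL)$. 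Resolving this requires a careful continuity analysis --- tracking eigenvalues of $M^\epsilon_-M^\epsilon_+$ as roots of the characteristic polynomial and invoking the Hamiltonian symmetry of $JL$ (nonzero real eigenvalues can only depart the real axis in complex quadruplets) --- to conclude $K\ge\limsup_{\epsilon\to 0^+}K^\epsilon\ge|N_-(M_+)-N_-(M_-)|$.
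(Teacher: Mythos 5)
Your first two steps are, in substance, exactly the paper's proof. The paper performs the same kernel-orthogonality reduction to $V$, writes the problem for $\nu\neq 0$ as the single linear selfadjoint pencil $\LL(\la)=M_++\la M_-^{-1}$ with $\la=\nu^2$, and applies the inequality \eqref{eq:graphical-inequality} of Theorem~\ref{theorem-pencil-count} once, with $Z^\downarrow_\pm(\LL)=0$ because $\LL(0)=M_+$ is invertible. Your second pencil and the attendant case analysis are harmless but redundant in the only situation where both pencils are defined, since invertibility of both $M_\pm$ on $V$ already forces $Z_1=Z_2=0$ and either inequality alone gives the claim.

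The genuine divergence is your third step, and here you should know that the paper does not perturb: it simply asserts that ``$M_\pm$ are both invertible on the subspace $V$'' as a consequence of the reduction, and stops. Your instinct that this deserves justification is sound --- invertibility of $PL_\pm P$ on $V$ does not follow from $\Ker(L_+)\perp\Ker(L_-)$ alone. However, the perturbation argument you sketch cannot be completed, for precisely the reason you flag: nonzero real points of $\sigma(J\!L^\eps)$ can emerge from $\gKer(J\!L)$ and collapse back to the origin as $\eps\to 0^+$, so $\tfrac12 N_\mathbb{R}(J\!L)\ge\limsup_{\eps\to 0^+}K^\eps$ is false in general. Indeed the inequality \eqref{eq:JL-inequality} itself can fail when $M_+|_V$ is singular: take $n=3$, let $L_+$ be the Hermitian matrix with $(L_+)_{22}=(L_+)_{23}=(L_+)_{32}=1$ and all other entries zero, and let $L_-=\mathrm{diag}(-1,0,-1)$. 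Then $\Ker(L_+)=\sspan\{e_1\}$ and $\Ker(L_-)=\sspan\{e_2\}$ are orthogonal, $V=\sspan\{e_3\}$, $M_+|_V=0$ and $M_-|_V=-1$, so the right-hand side of \eqref{eq:JL-inequality} equals $1$; yet $L_+L_-$ is nilpotent, so $\det(J\!L-\nu\mathbb{I})=\det(\nu^2\mathbb{I}+L_+L_-)=\nu^6$ and $N_\mathbb{R}(J\!L)=0$. So invertibility of $M_\pm$ on $V$ has to be treated as a standing hypothesis (which is what the paper's assertion amounts to), not as a removable technicality. Relative to the paper's proof, your argument is complete and identical in substance where the paper's is; your unfinished step addresses a case the paper does not cover and that cannot be covered as stated.
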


\begin{proof}
First, we reduce the spectral problem $J\!Lu=\nu u$ on $X=\mathbb{C}^{2n}$ to a spectral problem
for a linear selfadjoint pencil on $Y$ using the projection technique \cite{Grillakis1988,  Kap, KKS,VP}. 
Identifying $u\in X$ with the pair $(u_1, u_2)$ with $u_j\in Y$ for $j=1,2$, the spectral problem $J\!Lu=\nu u$ 
takes the form of a coupled system:
\begin{equation}
L_-u_2=\nu u_1\quad\text{and}\quad -L_+u_1=\nu u_2.
\end{equation}
Now since $\Ker(L_+)\perp\Ker(L_-)$, the space $Y$ can be decomposed into a direct sum of three pairwise orthogonal summands:
$Y=\Ker(L_+)\oplus\Ker(L_-)\oplus V$.
Letting  $P_\pm:Y\to\Ker(L_\pm)$ be the orthogonal projections onto $\Ker(L_\pm)$, and using the facts that
$L_\pm P_\pm=0$ and (as $L_\pm$ are selfadjoint) $P_\pm L_\pm=0$, we
may apply the projections $P_-$ and $P$ to the equation  $L_-u_2=\nu u_1$ and hence
obtain
\begin{equation}
\begin{split}
0 &=\nu P_-u_1\, , \\
PL_-P_+u_2+PL_-Pu_2&=\nu Pu_1\, ,
\end{split}
\end{equation}
and similarly applying the projections $P_+$ and $P$ to the equation $-L_+u_1=\nu u_2$,\begin{equation}
\begin{split}
0 &=\nu P_+ u_2\, ,\\
-PL_+P_-u_1-PL_+Pu_1&=\nu Pu_2\, .
\end{split}
\end{equation}
If $\nu\neq 0$, then we obviously have $P_-u_1=P_+u_2=0$, and setting $v_j=Pu_j\in V$ gives a coupled system 
of equations on the subspace $V$:
\begin{equation}
M_-v_2=\nu v_1\quad\text{and}\quad -M_+v_1=\nu v_2.
\end{equation}
The advantage of this reduction is that $M_\pm$ are both invertible on the subspace $V$, and therefore by 
eliminating $v_2$ we can write the original spectral problem $J\!Lu =\nu u$ for $\nu\neq 0$ as a spectral 
problem $\LL(\la)v_1=0$ where $\LL$ is the linear selfadjoint matrix pencil on $V$ given by 
\begin{equation}
\LL(\la):=L_0+\la L_1,\quad L_0:=M_+,\quad L_1:= M_-^{-1}, \quad \la:=\nu^2.
\end{equation}

Now we invoke Theorem~\ref{theorem-pencil-count}, specifically the inequality \eqref{eq:graphical-inequality} 
for the number $N_+(\LL)$ of positive real characteristic values of $\LL$ counted with geometric multiplicity.  
Since $L_0$ is invertible as well as the leading coefficient $L_1$, and since the total number of curves $\mu=\mu(\la)$ 
associated with the pencil $\LL$ passing through the origin $(\la,\mu)=(0,0)$ is equal to $\dim(\Ker(L_0))$, there are no such curves at all
in this application, thus $Z_\pm^\downarrow (\LL) = 0$.  Therefore \eqref{eq:graphical-inequality} takes the simple form
\begin{equation}
N_+(\LL)\ge\left|N_-(L_0)-N_-(L_1)\right| = \left|N_-(M_+)-N_-(M_-^{-1})\right|=\left|N_-(M_+)-N_-(M_-)\right|.
\end{equation}
The inequality \eq{eq:JL-inequality} follows from the fact that $N_+(\LL)$ counts the number of positive points in $\sigma(\LL)$, 
and hence by $\la=\nu^2$ it is exactly half the number of real nonzero points in $\sigma(J\!L)$, counted with geometric multiplicity.
\end{proof}

\subsection{Historical remarks}
Corollary~\ref{corollary-full-symmetry} is a generalization of an index theorem proved independently by Kapitula {\it et al.} \cite{KKS} 
and Pelinovsky \cite{Pel} in the special case that each maximal chain of root vectors corresponding to the characteristic 
value $\la=0$ has length 2. 
Such a situation corresponds to many typical Hamiltonian systems with symmetries \cite{GSS1, KKS}.  
In this case  $\zeta$ can be written in
the form $N_-(D)$ where $D$ is a Gram-type Hermitian matrix with elements $D_{jk}:=(Lu^{[j]},u^{[k]})$ and the vectors $u^{[j]}$ 
span $\gKer(J\!L)\ominus\Ker(L)$.
Also see \cite{KollarBosak} for an extensive survey of literature on related index theorems appearing in various fields of mathematics.

The relation between the dimension $2n_\mathrm{u}$ of the unstable invariant subspace of $J\!L$ and the number of negative 
eigenvalues of $L$ was first studied for $N_-(L) = 1$, in which case under the assumption that $\dim(\gKer(J\!L))=2\dim(\Ker(L))$, 
the count \eq{eq:symmetric-count}  indicates that $J\!L$ has at most one pair of non-imaginary (unstable) points of spectrum, 
and by the full Hamiltonian symmetry these points are necessarily real. In this case the famous 
\emph{Vakhitov-Kolokolov criterion} \cite{VK} 
applies in a large class of problems in the stability theory of waves and identifies the scalar quantity $\zeta=Z^+(\LL)=N_-(D)$ that
determines whether $n_\mathrm{u} = 0$ or $n_\mathrm{u} = 1$, i.e., 
whether a given wave is spectrally stable. In the Vakhitov-Kolokolov theory, the quantity $N_-(D)$ turns out to be equal to the derivative 
of the momentum (impulse functional) of the wave profile with respect to the wave speed.  
Pego and Weinstein \cite{PegoWeinstein} 
proved that the same quantity is related to the second derivative $D''(0)$ of the (usual) Evans function $D(\la)$, 
which by symmetries of the system satisfies 
$D(0) = D'(0) = 0$.  In celebrated papers, Grillakis {\it et al.} \cite{GSS1, GSS2} extended the spectral stability analysis associated 
to the Vakhitov-Kolokolov instability criterion to establish fully nonlinear stability properties of waves. See \cite{BSS} 
for the analysis preceding these general 
results and \cite{KapProm} for a historical discussion.   Simultaneously, Maddocks \cite{Maddocks} developed the theory of inertial laws
for constrained Hamiltonians and related it to results of MacKay \cite{MacKay} giving rise to an algebraic 
method for proving index theorems.

A generalization of the Vakhitov-Kolokolov criterion to the case of $N_-(L)>1$ (in the particular case that $z(L) = 0$) 
can be found in the work of Binding and Browne \cite{BinBrown1988} 
who used a beautiful combination of homotopy theory and analysis of eigenvalue branches in their argument. 
Later, Kapitula {\it et al.} \cite{KKS} and Pelinovsky \cite{Pel} proved a full generalization
of the Vakhitov-Kolokolov criterion to the case of $N_-(L)>1$.
They derived the formula mentioned above for $\zeta=Z^+(\LL)$ in terms of the Gram matrix $D$, and 
they also interpreted the  law \eqref{eq:symmetric-count} as a parity index by writing it in the form
$n_\mathrm{u}\equiv N_-(L)-N_-(D) \pmod 2$.  This identity plays an important role in the presence of 
full Hamiltonian symmetry because oddness of $n_\mathrm{u}$ indicates the presence of purely real points of 
the unstable spectrum.  Under some technical assumptions, Kapitula \cite{Kap} reproved these results using a similar technique 
involving continuity of eigenvalue curves in conjunction with the so-called \emph{Krein matrix} method.  
The latter effectively projects the problem to a finite-dimensional negative subspace of $L$, but in the process 
introduces unnecessary poles in the eigenvalue branches $\mu(\la)$, and these poles obstruct the simple graphical 
visualization of the Krein signatures and indices arising in the analysis. 
Kapitula and  \Haragus~\cite{KapHar} also proved the same count in the setting of differential operators with periodic 
coefficients using the Bloch wave decomposition (Floquet theory) to reduce the problem with bands of continuous 
spectrum to a collection of problems each of which has purely discrete spectrum. See also \cite{BJK} for an alternative 
proof of \eq{eq:symmetric-count} using aspects of integrable systems theory. An analogous  index theorem for quadratic 
Hermitian matrix pencils was proved 
by Pelinovsky and Chugunova \cite{ChagPel} using the theory of indefinite quadratic forms (Pontryagin spaces) and was later reproved 
by a homotopy technique \cite{KollarH} resembling the graphical method described in this paper. Another related count 
for quadratic operator pencils was recently proved by Bronski {\it et al.} \cite{BJK2}. 
Theorem~\ref{theorem-pencil-count} 
is an example of the kind of results that can be obtained beyond the simple context of problems reducible to linear pencils.
Index theorems similar to those presented here for selfadjoint and alternating (having alternating selfadjoint and skewadjoint coefficients)
polynomial matrix pencils of  arbitrary degree were recently proved in \cite{KHKT} using advanced results from the algebra of matrix 
pencils along with perturbation theory for polynomial pencils.  Similar results can be found also in
\cite{KollarBosak}.
Finally, a very recent and significant development has been the extension of  index theorems to cover the case of 
$J = \partial_x$ that includes important examples of traveling waves for 
Korteweg-deVries-type and Benjamin-Bona-Mahony-type problems.
Kapitula and Stefanov \cite{KapStef2012} and Pelinovsky \cite{Pel2013}
independently proved an analogue of Theorem~\ref{theorem-linearized-Hamiltonians-0} under the assumptions that  (i) there exist finitely
many negative eigenvalues of $L$, (ii) $\dim(\Ker(L)) = 1$, and (iii)  the essential spectrum of $L$  is bounded away from zero. They show 
how their analysis can be used to generalize particular stability results of Pego and Weinstein \cite{PegoWeinstein,PegoWeinsteinII} 
obtained originally for $N_-(L) = 1$. 

When the reality condition guaranteeing full Hamiltonian symmetry is dropped and one has
instead of \eqref{eq:symmetric-count} the more general statement \eqref{eq:unstable-count-0},
it is no longer possible to interpret the inertia law as a parity index.
Results analogous to those recorded in Theorem~\ref{theorem-linearized-Hamiltonians-0}
were previously proven in 
\cite[Theorem~6]{CP} and in \cite[Theorem 2.13]{KapHar} under the assumption that $L$ is an invertible 
operator with a compact inverse.  For the reader familiar with \cite{CP, KapHar}, the apparent difference between 
\eqref{eq:unstable-count-0} and the formulation of the index theorems found therein
is caused by a different choice of the definition of the Krein signature; see the footnote referenced between 
equations \eq{eq:Real-Im} and \eqref{simpleKreinexamples}. 
Finally, note that in \cite{KollarBosak} 
the authors give an algebraic formula for the quantity $\zeta$ appearing in the statement of 
Theorem~\ref{theorem-linearized-Hamiltonians-0} in the case that  $\zeta \neq Z^+(\LL)$,
and in this formula a key role is played by the canonical set of chains described in Theorem~\ref{multiplicities}.

Theorem~\ref{theorem-JL-inequality}, which concerns problems having the canonical symplectic structure \eqref{eq:canonical-form}, 
was proved independently and virtually simultaneously by Jones \cite{Jones1988} and Grillakis \cite{Grillakis1988}.  
Since the inequality \eqref{eq:JL-inequality} only provides a lower bound on the number of purely real points of $\sigma(J\!L)$ 
and not its exact count, the straightforward generalization from the finite-dimensional setting to operator theory 
does not require completeness of the root vectors. Jones' proof \cite{Jones1988} of Theorem~\ref{theorem-JL-inequality} 
is of a very different nature from the graphical one we have presented, but that of Grillakis \cite{Grillakis1988} is quite similar,
with spectral projections playing the role of the eigenvalue curves $\mu=\mu(\la)$.  We think
that our approach, embodied in the proof of Theorem~\ref{theorem-pencil-count},  gives a very simple way to visualize the count.  
This problem was also studied by Chugunova and Pelinovsky \cite{CP}, who proved a number
of related results using the Pontryagin Invariant Subspace Theorem applied to the linear pencil. More recently, a similar approach 
combined with the use of the Krein matrix, an analytical interpretation of the Krein signature, and the Keldysh Theorem 
\cite{Keldysh, Markus},  was employed by Kapitula and Promislow \cite{KapProm}, who reproved Theorem~\ref{theorem-JL-inequality} 
(see the paper for further historical remarks). The connection to the linear pencil was also pointed out in a similar setting in \cite{VP}.

\section{Discussion and Open Problems\label{s:Discussion}}
We hope that our paper has demonstrated how the analytic interpretation of Krein signatures and indices in terms of 
real curves and their order of vanishing at real characteristic values helps to easily visualize, simplify, and organize numerous results 
found in the literature on stability of nonlinear waves. 

The analytical or graphical interpretation of the Krein signature put forth in Definition~\ref{graphsig} and 
Theorem~\ref{theorem:signature} 
is apparently limited to real characteristic values of  selfadjoint pencils.  In particular, such a formulation
applied to the linear pencil \eq{LKeq} related to the Hamiltonian spectral problem \eqref{JL} does not provide 
any direct information about $\sigma(J\!L)$ off the imaginary axis. But such a characterization seems unnecessary, 
as the (traditional) Krein signature of non-imaginary points of $\sigma(J\!L)$ is easy to calculate and is equal to zero. 
Nevertheless it would be interesting to determine whether an approach similar to the one presented here 
can be applied to study non-selfadjoint pencils and/or to detect further information about non-real characteristic 
values of selfadjoint pencils. 

Our main new result, the generalization of the notion of an Evans function to that of an  Evans-Krein function, 
simplifies numerical calculation of Krein signatures from an Evans function and is a method easy to incorporate into existing codes.
From the theoretical perspective an interesting problem would be to find 
an intrinsically geometrical interpretation of Evans-Krein functions similar to the characterization of Evans functions given 
by Alexander {\it et al.} \cite{AGJ}. 

The graphical nature of the signature should also allow us to generalize the index theorems presented in \S\ref{s:Counts} to 
handle operators on infinite-dimensional Hilbert spaces with general kernels, sidestepping
certain unnecessary technical difficulties.   One also expects to be able to identify optimal
(from the graphical point of view) assumptions for the validity of these theorems. 
We advocate that, in general,  the concept of the graphical signature is often more suitable for  analysis
(either rigorous or numerical)  than the traditional one as it does not rely on any particular algebraic structure of the operator pencil. 
This may possibly allow further applications of signature arguments for non-polynomial pencils where the traditional definition 
based on indefinite quadratic forms falls flat. 

Finally, the graphical approach to Krein signature theory may be preferable for understanding various mechanisms 
for avoiding Hamiltonian-Hopf bifurcations \cite{MacKay,Meiss} in Hamiltonian systems with a tuning parameter $t\in\RR$. 
Of particular interest here are ``non-generic" collisions (in the sense of Arnold \cite{Arnold}) involving two real characteristic values  
of a selfadjoint pencil having  opposite Krein signatures but that nonetheless do not bifurcate from the real axis 
(see \cite{DeMeMa1992} for  related analysis).
The question to be addressed is the fate of the various eigenvalue curves $\mu=\mu(\la)$ of \eq{gpencil}
when the spectral problem is perturbed in an admissible fashion.  Here, one expects that certain conditions may ensure that not only 
do real characteristic values of opposite signatures survive collisions, 
but also the transversal intersection of eigenvalue branches $\mu=\mu(\la)$ is preserved as well.
Such a mechanism for avoiding Hamiltonian-Hopf bifurcations is significantly different from the one described by 
Krein and Ljubarskii \cite{KLjub} in the context of differential equations with periodic coefficients. 
Here we only present an illustrative example of what we have in mind;  we expect to publish elsewhere new 
results given necessary and sufficient conditions for the preservation of branch crossings.

\begin{example}[Nongeneric perturbations can avoid Hamiltonian-Hopf bifurcations.]\label{ex:branchprev}
Consider the pencil $\LL(\la):=L-\la K$, $K=(iJ)^{-1}$, where $L$ depends linearly on a control parameter 
$t\in\mathbb{R}$ as $L=A+tB$ where $A:=\mathrm{diag}(5,1,3,2)$, and where
\begin{equation}
B:=\begin{pmatrix}0 & 1 & 0 & 0\\1 & 0 & 0 & 0\\0 & 0 & 0 & 0\\0 &  0 & 0 & 0\end{pmatrix}\quad\text{and}\quad
J:=-\frac{1}{2}\begin{pmatrix}0 & 2i & 0 & 0\\
2i & 0 & 0 & 0\\
0 & 0 & 0 & i\\
0 & 0 & i & 0\end{pmatrix}.
\end{equation}
Fig.~\ref{fig13} illustrates how transversal intersections between the  eigenvalue branches 
$\mu=\mu(\la)$ present for $t=0$ can persist for $t \neq 0$.  For general perturbations of $L=A$ one
would expect all four transversal intersections to simultaneously ``break'' for $t\neq 0$, resulting
in four uniformly ordered analytic eigenvalue branches $\mu_1(\la)<\mu_2(\la)<\mu_3(\la)<\mu_4(\la)$, 
and if these branches have critical points, Hamiltonian-Hopf bifurcations resulting in the loss of one or more pairs of real 
characteristic values becomes possible as the branches shift vertically.  Therefore
the perturbation $B$ has to be quite special for the intersections to persist, a situation that prevents all 
Hamiltonian-Hopf bifurcations from occurring.  
\end{example}

\begin{figure}[htp]
\centering
\includegraphics{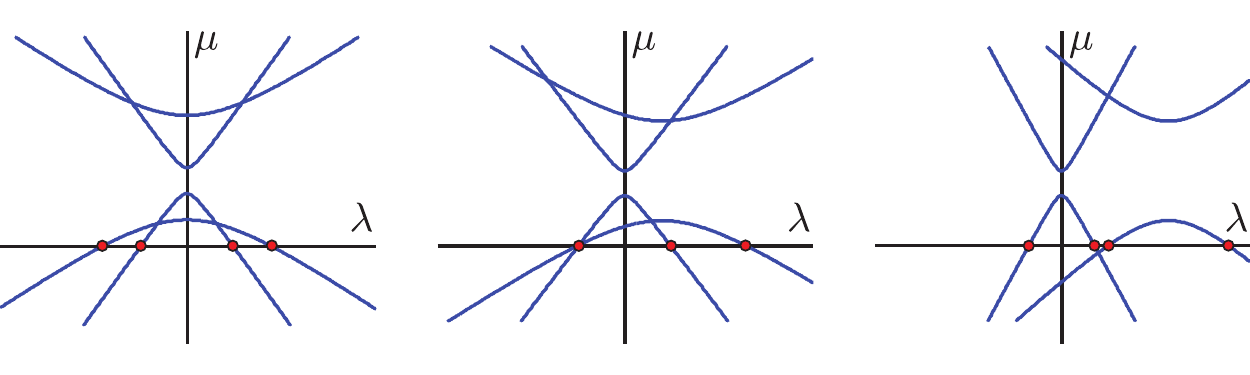}
\caption[]{%
Spectrum of $L(t)-\la K$ plotted against $\la$ for the matrices in Example~\ref{ex:branchprev}.
Left panel:  $t=0$.  Center panel:  $t=1$ (exhibiting a necessarily harmless collision between characteristic 
values of the same Krein signature).  
Right panel:  $t=4$. Between $t=1$ and $t=4$ two real characteristic values of opposite Krein signature 
pass through each other unscathed 
due to the preservation of transversal crossings of eigenvalue branches $\mu=\mu(\la)$.}
\label{fig13}
\end{figure}

\appendix
\section{Proofs of Propositions~\ref{theorem:algebraic-equivalence} and~\ref{prop:chains_derivatives}}
\subsection{Proof of Proposition~\ref{theorem:algebraic-equivalence}}\label{app1}
The statement of the boundedness and holomorphicity of the pencil $\MM$ follows as a special 
case of \cite[VII.1.2, p. 367, Theorem 1.3]{Kato}.  
To prove (a), suppose that $\la_0$ with $|\la_0-\la'|<\epsilon$ is such that there exists a nonzero
vector $u\in D\subset X$ such that $\LL(\la_0)u=0$.  Then also $(\LL(\la_0)+\delta\II)u=\delta u$, so multiplying on the left by the bounded operator $(\LL(\la_0)+\delta\II)^{-1}$ and comparing with
\eqref{eq:MMdefine1} shows that also $\MM(\la_0)u=0$.  On the other hand, if $\la_0$ is such that
there exists a nonzero  vector $u\in X$ for which $\MM(\la_0)u=0$, then also $u=\delta(\LL(\la_0)+\delta\II)^{-1}u$ so that in fact $u$ lies in the domain $D\subset X$ of $\LL(\la_0)$. By acting on the left with $\LL(\la_0)+\delta\II$ one sees that $\LL(\la_0)u=0$.

To prove (b), we use mathematical induction on $m$ to show that the equations
\begin{equation}
\sum_{j=0}^{s} \frac{\LL^{(j)}(\la_0)}{j!} u^{[s-j]} = 0
\label{eq0-Richard-II}
\end{equation}
and 
\begin{equation}
\sum_{j=0}^{s} \frac{\MM^{(j)}(\la_0)}{j!} v^{[s-j]} = 0
\label{eq0b-Richard-II}
\end{equation}
are equivalent, i.e., if there is a chain of vectors $\{u^{[j]}\}_{j=0}^{m-1}$ solving the system \eq{eq0-Richard-II} for $s=0, 1, \dots, m-1$ that cannot be augmented with an additional vector $u^{[m]}$ so that  \eq{eq0-Richard-II} is satisfied for $s=m$,
then  $\{v^{[j]}=u^{[j]}\}_{j=0}^{m-1}$ is a chain of vectors solving the system \eq{eq0b-Richard-II}
that cannot be augmented with an additional vector $v^{[m]}$ so that \eq{eq0b-Richard-II} is satisfied for $s=m$, and vice-versa. 

The statement for $m=0$ follows from our proof of statement (a). Let us assume that $m \ge 1$, and that the equations \eq{eq0-Richard-II} 
and \eq{eq0b-Richard-II} are equivalent for $s = 0, 1, \dots, m-1$. 
Since $\MM(\la)=\II-\BB(\la)$, 
the equation \eq{eq0b-Richard-II} for any $s = 0, 1, \dots, m$ can be  rewritten as
\begin{equation}
-\sum_{j=1}^{s}
 \frac{\BB^{(j)}(\la_0)}{j!} v^{[s-j]} + \MM(\la_0) v^{[s]}  = 0,
\label{eq0d-Richard-II}
\end{equation}
or, equivalently,
\begin{equation}
v^{[s]} =  \sum_{j=0}^s \frac{\BB^{(j)}(\la_0)}{j!} v^{[s-j]}.
\label{eq0e-Richard-II}
\end{equation}
Apply the operator $\LL(\la_0)+\delta\II$ on \eq{eq0d-Richard-II} from the left to obtain 
\begin{equation}
-\sum_{j=1}^{s}
(\LL(\la_0)+\delta\II) \frac{\BB^{(j)}(\la_0)}{j!} v^{[s-j]} + \LL(\la_0) v^{[s]}  = 0,
\label{eq0f-Richard-II}
\end{equation}
where we used the fact that $(\LL(\la)+\delta\II) \MM (\la) = \LL(\la)$. 

Let us differentiate $j$ times, $j \ge 1$, the identity $(\LL(\la)+\delta\II)\BB(\la) = \delta\II$ on $X$ to obtain (also dividing through by $j!$ and evaluating at $\la=\la_0$)
\begin{equation}
(\LL(\la_0)+\delta\II) \frac{\BB^{(j)}(\la_0)}{j!}  =- \sum_{i=1}^{j} \frac{\LL^{(i)}(\la_0)}{i!} \cdot \frac{\BB^{(j-i)}(\la_0)}{(j-i)!},\quad j\ge 1.
\label{eq0g-Richard-II}
\end{equation}
Plugging in \eq{eq0g-Richard-II} for the derivatives of $\BB$ in \eq{eq0f-Richard-II} gives an equivalent equation
\begin{equation}
\sum_{j=1}^s  \left[\sum_{i=1}^{j} \frac{\LL^{(i)}(\la_0)}{i!} \cdot \frac{\BB^{(j-i)}(\la_0)}{(j-i)!}
\right] v^{[s-j]} + \LL(\la_0) v^{[s]} = 0.
\label{eq0h-Richard-II}
\end{equation}
By reordering terms in \eq{eq0h-Richard-II} we obtain
\begin{equation}
\sum_{i=1}^{s} \frac{\LL^{(i)}(\la_0)}{i!} 
\left[ \sum_{j=0}^{s-i} \frac{\BB^{(j)}(\la_0)}{j!} v^{[s-i-j]}\right]+ \LL(\la_0) v^{[s]} = 0.
\label{eq0i-Richard-II}
\end{equation}
According to the induction assumption the systems \eq{eq0-Richard-II} and \eq{eq0b-Richard-II} are equivalent for $s = 0, 1, \dots, m-1$. 
Set $s = m$ in \eq{eq0i-Richard-II} and use the induction assumption, i.e., \eq{eq0e-Richard-II} for $s = 0, 1, \dots, m-1$ to replace the expressions in
the square brackets in \eq{eq0i-Richard-II} to obtain
\begin{equation}
 \sum_{i=0}^{m} \frac{\LL^{(i)}(\la_0)}{i!} v^{[s-i]} = \sum_{i=1}^{m} \frac{\LL^{(i)}(\la_0)}{i!}  v^{[s-i]} + \LL(\la_0) v^{[s]} = 0.
\label{eq0j-Richard-II}
\end{equation} 
Since by the induction assumption $v^{[j]} = u^{[j]}$ for $j=0,1, \dots, m-1$, one can set $v^{[m]} = u^{[m]}$. 
The same argument yields the nonexistence of a solution of \eq{eq0b-Richard-II} for $s= m+1$ if \eq{eq0-Richard-II} does not have a solution for $s= m+1$  and vice-versa.  
Therefore, the maximal chains of characteristic vectors of $\LL$ and of $\MM$ at $\la = \la_0$ agree, i.e., 
there is one-to-one correspondence between them. 

Finally, note that the statement (c) is a direct consequence of the statements (a) and (b), given the definitions of algebraic and geometric multiplicity of characteristic values. \qed

\subsection{Proof of Proposition~\ref{prop:chains_derivatives}}\label{app2}
We first show that the case of $\LL$ having compact resolvent can be reduced to the Fredholm case.  Associated with $\LL$ we have the Fredholm operator $\MM$ defined as in \eqref{eq:MMdefine1} by choosing $\la'=\la_0$ and taking suitable real $\delta\neq 0$.  According to Proposition~\ref{theorem:algebraic-equivalence}, $\la_0$ is also a real characteristic value of $\MM$ of geometric multiplicity $k$, and the maximal chains of $\MM$ and $\LL$ for $\la_0$ are identical.
It therefore remains to relate the eigenvalue and eigenvector branches for the two pencils
$\LL$ and $\MM$.  

Suppose that $\mu=\mu(\la)$ is an analytic eigenvalue branch associated with
the analytic eigenvector branch $u=u(\la)$ for the selfadjoint spectral problem \eqref{Lmu}, and that $\mu(\la_0)=0$.  Then, keeping in mind the relation \eqref{eq:MMdefine1} between $\LL$ and
$\MM$ and the fact that both are selfadjoint for $\la_0-\epsilon<\la<\la_0+\epsilon$, a simple 
application of the functional calculus shows that $v(\la):=u(\la)$ is one of the eigenvector branches of the related spectral problem $\MM(\la)v(\la)=\varphi(\la)v(\la)$ with corresponding eigenvalue branch given by
\begin{equation}
\varphi(\la):=\frac{\mu(\la)}{\mu(\la)+\delta},\quad \la_0-\epsilon<\la<\la_0+\epsilon.
\end{equation}
From this result, it follows quickly that $\varphi(\la)$ and $\mu(\la)$ vanish to exactly the same order at $\la=\la_0$.  Since the eigenvector branches coincide, this implies that the flags $\{Y_s\}_{s=1}^\infty$ are exactly the same for the pencils $\LL$ and $\MM$.

We therefore consider $\LL(\la)$ to be a selfadjoint holomorphic pencil on $X$
of Fredholm form $\LL(\la)=\II-\BB(\la)$ with $\BB(\la)$ compact and injective.
The selfadjoint operator $\LL(\la):X\to X$ admits the spectral representation
$\LL(\la)=\UU(\la)\DD(\la)\VV(\la)$, where 
$\DD(\la):\ell_2\to\ell_2$ is bounded and diagonal:
\begin{equation}
\DD(\la)\{a_j\}_{j=1}^\infty:=\{a_j\mu_j(\la)\}_{j=1}^\infty.
\end{equation}
According to Definition~\ref{def:op_chains} we will need to calculate derivatives of $\LL(\la)$, and these are given by the formula
\begin{equation}
\frac{1}{n!}\LL^{(n)}(\la)=\sum_{n_1=0}^n\sum_{n_2=0}^{n-n_1}
\frac{U^{(n-n_1-n_2)}(\la)M^{(n_2)}(\la)V^{(n_1)}(\la)}{(n-n_1-n_2)!n_2!n_1!},
\quad
\la\in S\cap\RR,
\label{eq:pencilderivs}
\end{equation}
where the derivatives of $\UU(\la)$ are defined by \eqref{eq:UUderivs}, and those of $\DD(\la)$ and $\VV(\la)$ are given by
\begin{equation}
\DD^{(n)}(\la)\{a_j\}_{j=1}^\infty:=\left\{a_j\mu_j^{(n)}(\la)\right\}_{j=1}^\infty, \quad
\VV^{(n)}(\la)w:=\left\{\left(w,u_j^{(n)}(\la)\right)\right\}_{j=1}^\infty.
\label{eq:operatorderivs}
\end{equation}
For the special value of $\la=\la_0$ we simplify the notation:
$\UU_0^{(n)}:=\UU^{(n)}(\la_0)$, $\DD_0^{(n)}:=\DD^{(n)}(\la_0)$, and $\VV_0^{(n)}:=\VV^{(n)}(\la_0)$, 
and we omit superscripts of ``$(0)$'' altogether.%
In general, the definitions \eqref{eq:UUderivs} and \eqref{eq:operatorderivs} have to be regarded in a formal sense without
further conditions on the behavior of the eigenvalue and eigenvector derivative sequences 
$\{\mu_j^{(n)}(\la)\}_{j=1}^\infty$ and $\{u_j^{(n)}(\la)\}_{j=1}^\infty$. 
However, since we have reduced the problem to the Fredholm case, the derivatives $\LL^{(n)}$
are all defined and bounded on the whole Hilbert space $X$ for every $\la\in (\la_0-\epsilon,\la_0+\epsilon)$, and this implies that the combinations appearing in \eqref{eq:pencilderivs} as well as those resulting from other formal manipulations to follow shortly also make sense on the whole space.  This would be a substantially more difficult issue were $\LL$ unbounded.

We prove the proposition in the Fredholm case by induction on $m$.  The case $m=1$ is obvious, so we assume the inductive hypothesis holds for $m=m_0$ 
and prove it for $m=m_0+1$.  For there to exist a chain of length $m=m_0+1$, according to Definition~\ref{def:op_chains} we 
need to find a nonzero vector $u^{[0]}\in Y_1=\Ker(\LL(\la_0))$  and vectors $u^{[1]},\dots,u^{[m_0]}$ in $X$ such that for 
each $q=0,1,\dots,m_0$,
\begin{equation}
\sum_{n=0}^q\frac{1}{n!}\mathcal{L}^{(n)}(\lambda_0)u^{[q-n]}=0.
\end{equation}
According to \eqref{eq:pencilderivs} this can be written in the form
\begin{equation}
\sum_{n=0}^q\sum_{n_1=0}^n\sum_{n_2=0}^{n-n_1}
\frac{\UU_0^{(n-n_1-n_2)}\DD_0^{(n_2)}\VV_0^{(n_1)}u^{[q-n]}}{(n-n_1-n_2)!n_2!n_1!}=0,\quad q=0,1,2,\dots,m_0.
\label{e01}
\end{equation}
By the inductive hypothesis,  \eq{e01} holds true for $q=0,1,2,\dots,m_0-1$ if and only if
\begin{equation}
u^{[r]}=\sum_{d=0}^r\frac{1}{d!}\UU_0^{(d)}\VV_0 w^{[r-d]},\quad r=0,1,2,\dots,m_0-1,\quad\text{where $w^{[s]}\in Y_{m_0-s}$},
\end{equation}
so it remains to set $q=m_0$ and attempt to solve for $u^{[m_0]}$:
\begin{equation}
\UU_0\DD_0\VV_0 u^{[m_0]}=-\sum_{n=1}^{m_0}\sum_{n_1=0}^n\sum_{n_2=0}^{n-n_1}\sum_{d=0}^{m_0-n}
\frac{\UU_0^{(n-n_1-n_2)}\DD_0^{(n_2)}\VV_0^{(n_1)}\UU_0^{(d)}\VV_0 w^{[m_0-n-d]}}{(n-n_1-n_2)!n_2!n_1!d!}.
\label{eq:ForcingTerms}
\end{equation}
All derivatives $\VV_0^{(n_1)}$ for $n_1>0$ of $\VV(\la)$ at $\la=\la_0$ can be eliminated in favor 
of $\VV_0$ and derivatives of $\UU(\la)$ at $\la=\la_0$ by repeated  differentiation of the identities
$\UU(\la)\VV(\la)=\mathbb{I}_X$ and $\VV(\la)\UU(\la)=\mathbb{I}_{\ell_2}$.  This leads
to dramatic cancellations in the four-fold sum appearing on the right-hand side; some straightforward but lengthy calculations 
show that in fact  \eqref{eq:ForcingTerms} can be rewritten in the much simpler form
\begin{equation}
\UU_0\DD_0\VV_0 u^{[m_0]}=\sum_{s=1}^{m_0}\frac{1}{s!}\UU_0\DD_0\VV_0\UU_0^{(s)}\VV_0 w^{[m_0-s]} -\sum_{s=1}^{m_0}\sum_{b=0}^s
\frac{\UU_0^{(s-b)}\DD_0^{(b)}\VV_0 w^{[m_0-s]}}{b!(s-b)!}.
\label{eq:UMeqn}
\end{equation}
At this point we can use the maps $\UU_0$ and $\VV_0$ to shift from an equation on $X$ to an equivalent equation 
on $\ell_2$.  We therefore write
\begin{equation}
\tilde{u}^{[m_0]}:=\VV_0 u^{[m_0]}\in\ell_2\quad\text{and}\quad
\tilde{w}^{[j]}:=\VV_0 w^{[j]}\in\ell_2,\quad j=0,1,2,\dots,m_0-1,
\end{equation}
and applying the operator $\VV_0$ on the left of \eqref{eq:UMeqn} yields
\begin{equation}
\DD_0\tilde{u}^{[m_0]}=\sum_{s=1}^{m_0}\frac{1}{s!}\DD_0\VV_0\UU_0^{(s)}\tilde{w}^{[m_0-s]} -\sum_{s=1}^{m_0}\sum_{b=0}^s
\frac{\VV_0\UU_0^{(s-b)}\DD_0^{(b)}\tilde{w}^{[m_0-s]}}{b!(s-b)!}.
\label{eq:UMeqnEll2}
\end{equation}

We now invoke the inductive hypothesis that $w^{[m_0-s]}\in Y_s$ for $s=1,\dots,m_0$.
Since $\DD_0^{(b)}$ is the diagonal multiplication operator on $\ell_2$ with diagonal entries $\mu_j^{(b)}(\la_0)$,
 it is easy to see that, by definition of the flag of subspaces $Y_j$,
we have $\DD_0^{(b)}\tilde{w}^{[m_0-s]}=0$ for $b=0,1,2,\dots,s-1$.  The inductive hypothesis therefore implies that the 
right-hand side of \eqref{eq:UMeqnEll2} can be simplified yet further:
\begin{equation}
\DD_0\tilde{u}^{[m_0]}=\sum_{s=1}^{m_0}\frac{1}{s!}\DD_0\VV_0\UU_0^{(s)}\tilde{w}^{[m_0-s]}-\sum_{s=1}^{m_0}
\frac{1}{s!}\DD_0^{(s)}\tilde{w}^{[m_0-s]}
\label{eq:UMeqnFinal}
\end{equation}
where we have used the identity $\VV_0\UU_0=\mathbb{I}_{\ell_2}$.

Now consider the solvability of \eqref{eq:UMeqnFinal} for $\tilde{u}^{[m_0]}$. 
It is obvious that the first sum is in the range of $\DD_0$, so the solvability condition that we require is that
\begin{equation}
\sum_{s=1}^{m_0}\frac{1}{s!}\DD_0^{(s)}\tilde{w}^{[m_0-s]}\in\Ran(\DD_0)=\Ker(\DD_0)^\perp.
\label{eq:solvability}
\end{equation} 
On the other hand, it follows in particular from the inductive hypothesis and the nesting of the subspaces making up the flag
$\{Y_s\}_{s=1}^\infty$ that $w^{[j]}\in Y_1=\Ker(\UU_0\DD_0\VV_0)$ for $j=0,\dots,m_0-1$, a fact which implies in turn that
$\tilde{w}^{[j]}\in\Ker(\DD_0)$ for $j=0,\dots,m_0-1$.  Since $\DD_0^{(s)}$ commutes with $\DD_0$ (both being diagonal) 
we therefore see that the solvability condition takes the form
\begin{equation}
\sum_{s=1}^{m_0}\frac{1}{s!}\DD_0^{(s)}\tilde{w}^{[m_0-s]}=0.
\label{eq:solvabilityII}
\end{equation}
Now, because $w^{[0]}\in Y_{m_0}$ by the inductive hypothesis, and since the operator $\DD_0^{(m_0)}$ is diagonal, 
the vector $\DD_0^{(m_0)}\tilde{w}^{[0]}$ can only have a nonzero entry if the corresponding diagonal entry of the 
operator $\DD_0^{(s)}$ vanishes for $s=0,1,2,\dots,m_0-1$.  This shows that the solvability condition \eqref{eq:solvabilityII} 
actually splits into two independent conditions:
\begin{equation}
\DD_0^{(m_0)}\tilde{w}^{[0]}=0\quad\text{and}\quad\sum_{s=1}^{m_0-1}\frac{1}{s!}\DD_0^{(s)}\tilde{w}^{[m_0-s]}=0.
\end{equation}
By a subordinate inductive argument in which one considers the term in the sum with the largest value of $s$ and shows that it must vanish
independently of the terms in the sum with smaller values of $s$ due to the standing inductive hypothesis that $w^{[m_0-s]}\in Y_s$,
one shows that the solvability condition \eqref{eq:solvabilityII} is actually equivalent to $m_0$ independent conditions:
\begin{equation}
\DD_0^{(s)}\tilde{w}^{[m_0-s]}=0,\quad s=1,\dots,m_0.
\end{equation}
Combining these conditions with the inductive hypothesis proves that
the solvability condition is satisfied if and only if we impose on $w^{[j]}$, $j=0,\dots,m_0-1$, the additional condition that
\begin{equation}
w^{[m_0-s]}\in Y_{s+1},\quad s=1,2,\dots,m_0.
\end{equation}
(For the chain to exist it is necessary that $w^{[0]}\neq 0$ and hence the subspace $Y_{m_0+1}$ must be nontrivial.)
These additional conditions obviously reduce \eqref{eq:UMeqnFinal} to the form
\begin{multline}
\DD_0\tilde{u}^{[m_0]}=\DD_0\sum_{s=1}^{m_0}\frac{1}{s!}\VV_0\UU_0^{(s)}\tilde{w}^{[m_0-s]}\implies\\
\tilde{u}^{[m_0]}=\sum_{s=1}^{m_0}\frac{1}{s!}\VV_0\UU_0^{(s)}\tilde{w}^{[m_0-s]}\pmod{\Ker(\DD_0)}.
\end{multline}
Since $u^{[m_0]}=\UU_0\tilde{u}^{[m_0]}$ and $\UU_0$ is an isomorphism from $\Ker(\DD_0)$ onto $\Ker(\UU_0\DD_0\VV_0)=Y_1$, we obtain
\begin{equation}
u^{[m_0]}=\sum_{s=1}^{m_0}\frac{1}{s!}\UU_0^{(s)}\VV_0 w^{[m_0-s]} + w^{[m_0]}=\sum_{s=0}^{m_0}
\frac{1}{s!}\UU_0^{(s)}\VV_0 w^{[m_0-s]},\quad w^{[m_0]}\in Y_1,
\end{equation}
which completes the induction step by showing that the statement holds for $m=m_0+1$. \qed

\end{document}